\def\eqnarray{\stepcounter{equation}\let\@currentlabel=\theequation
\global\@eqnswtrue
\tabskip\@centering\let\\=\@eqncr
$$\halign to \displaywidth\bgroup\hfil\global\@eqcnt\z@
  $\displaystyle\tabskip\z@{##}$&\global\@eqcnt\@ne
  \hfil$\displaystyle{{}##{}}$\hfil
  &\global\@eqcnt\tw@ $\displaystyle{##}$\hfil
  \tabskip\@centering&\llap{##}\tabskip\z@\cr}
\def\endeqnarray{\@@eqncr\egroup
      \global\advance\c@equation\m@ne$$\global\@ignoretrue}
\def\@yeqncr{\@ifnextchar [{\@xeqncr}{\@xeqncr[5pt]}}
\newtheorem{theorem}{Theorem}[section]
\newtheorem{lemma}[theorem]{Lemma}
\newtheorem{proposition}[theorem]{Proposition}
\theoremstyle{definition}
\numberwithin{equation}{section}
\newtheorem{definition}[theorem]{Definition}
\newtheorem{assu}[theorem]{Assumption}
\newtheorem{rem}[theorem]{Remark}
\def\N{{\mathbb N}}
\def\Z{{\mathbb Z}}
\def\R{{\mathbb R}}
\def\C{{\mathbb C}}
\newcommand{\G}{\text{G}}
\newcommand{\g}{\text{g}}
\renewcommand{\L}{\mathbb{L}}
\newcommand{\tr}{\text{tr}}
\renewcommand{\Re}{\hbox{\rm Re}\,}
\newcommand{\supp}{\text{\rm supp\,}}
\newcommand{\1}{\mathds{1}}
\newcommand{\dv}{\text{div}}
\DeclareMathOperator{\dom}{Dom}    
\newcommand{\dyn}{{\text{d}}}
\newcommand{\Gam}{D}
\newcommand{\dist}{\text{dist}}
\newcommand{\ft}{\mathfrak{t}}
\newcommand{\J}{\mathfrak{J}}
\newcommand{\SSS}{\mathcal{S}}
\def\typeout#1{\message{^^J}\message{#1}\message{^^J}}
\newif\ifSRCOK \SRCOKtrue
\def\EJECT{\SRC\eject}
\def\WinEdt#1{\typeout{:#1}}
\gdef\MainFile{\jobname.tex}
\gdef\CurrentInput{\MainFile}
\def\SRC{\ifSRCOK%
  \ifnum\inputlineno>\LASTLINE%
    \ifnum\LASTLINE<0%
      \global\PAGETOP=\inputlineno%
    \fi%
    \global\LASTLINE=\inputlineno%
    \ifnum\INPSP=0%
      \ifnum\inputlineno>\PAGETOP%
        
      \fi%
    \else%
      
    \fi%
  \fi%
\fi}
\def\PUSH#1{%
\SRC%
\ifnum\INPSP=0 \global\let\INPSTACKA=\CurrentInput \else%
\ifnum\INPSP=1 \global\let\INPSTACKB=\CurrentInput \else%
\ifnum\INPSP=2 \global\let\INPSTACKC=\CurrentInput \else%
\ifnum\INPSP=3 \global\let\INPSTACKD=\CurrentInput \else%
\ifnum\INPSP=4 \global\let\INPSTACKE=\CurrentInput \else%
\ifnum\INPSP=5 \global\let\INPSTACKF=\CurrentInput \else%
               \global\let\INPSTACKX=\CurrentInput \fi\fi\fi\fi\fi\fi%
\gdef\CurrentInput{#1}%
\WinEdt{<+ \CurrentInput}%
\global\LASTLINE=0%
\ifSRCOK\fi%
\global\advance\INPSP by 1}
\def\POP{%
\ifnum\INPSP>0 \global\advance\INPSP by -1  \fi%
\ifnum\INPSP=0 \global\let\CurrentInput=\INPSTACKA \else%
\ifnum\INPSP=1 \global\let\CurrentInput=\INPSTACKB \else%
\ifnum\INPSP=2 \global\let\CurrentInput=\INPSTACKC \else%
\ifnum\INPSP=3 \global\let\CurrentInput=\INPSTACKD \else%
\ifnum\INPSP=4 \global\let\CurrentInput=\INPSTACKE \else%
\ifnum\INPSP=5 \global\let\CurrentInput=\INPSTACKF \else%
               \global\let\CurrentInput=\INPSTACKX \fi\fi\fi\fi\fi\fi%
\WinEdt{<-}%
\global\LASTLINE=\inputlineno%
\global\advance\LASTLINE by -1%
\SRC}
\def\INPUT#1{\relax}
\def
\let\originalxxxeverypar\everypar
\newtoks\everypar
\everymath\expandafter{\the\everymath\expandafter\SRC}
\output\expandafter{\expandafter\SRCOKfalse\the\output}
\newif\ifSRCOK \SRCOKtrue
\gdef\MainFile{\jobname.tex}
\gdef\CurrentInput{\MainFile}
\def\EJECT{\SRC\eject}
\def\WinEdt#1{\typeout{:#1}}
\def\SRC{\ifSRCOK%
  \ifnum\inputlineno>\LASTLINE%
    \ifnum\LASTLINE<0%
      \global\PAGETOP=\inputlineno%
    \fi%
    \global\LASTLINE=\inputlineno%
    \ifnum\INPSP=0%
      \ifnum\inputlineno>\PAGETOP%
      \fi%
    \else%
    \fi%
  \fi%
\fi}
\def\PUSH#1{%
\SRC%
\ifnum\INPSP=0 \global\let\INPSTACKA=\CurrentInput \else%
\ifnum\INPSP=1 \global\let\INPSTACKB=\CurrentInput \else%
\ifnum\INPSP=2 \global\let\INPSTACKC=\CurrentInput \else%
\ifnum\INPSP=3 \global\let\INPSTACKD=\CurrentInput \else%
\ifnum\INPSP=4 \global\let\INPSTACKE=\CurrentInput \else%
\ifnum\INPSP=5 \global\let\INPSTACKF=\CurrentInput \else%
               \global\let\INPSTACKX=\CurrentInput \fi\fi\fi\fi\fi\fi%
\gdef\CurrentInput{#1}%
\WinEdt{<+ \CurrentInput}%
\global\LASTLINE=0%
\ifSRCOK\fi%
\global\advance\INPSP by 1}
\def\POP{%
\ifnum\INPSP>0 \global\advance\INPSP by -1  \fi%
\ifnum\INPSP=0 \global\let\CurrentInput=\INPSTACKA \else%
\ifnum\INPSP=1 \global\let\CurrentInput=\INPSTACKB \else%
\ifnum\INPSP=2 \global\let\CurrentInput=\INPSTACKC \else%
\ifnum\INPSP=3 \global\let\CurrentInput=\INPSTACKD \else%
\ifnum\INPSP=4 \global\let\CurrentInput=\INPSTACKE \else%
\ifnum\INPSP=5 \global\let\CurrentInput=\INPSTACKF \else%
               \global\let\CurrentInput=\INPSTACKX \fi\fi\fi\fi\fi\fi%
\WinEdt{<-}%
\global\LASTLINE=\inputlineno%
\global\advance\LASTLINE by -1%
\SRC}
\def\INPUT#1{\relax}
\let\OldINCLUDE=\include
\def\include#1{
\EJECT%
\PUSH{#1.tex}%
\OldINCLUDE{#1}%
\POP}
\def
\let\originalxxxeverypar\everypar
\newtoks\everypar
\everymath\expandafter{\the\everymath\expandafter\SRC}
\let\zzzxxxbibliography=\bibliography
\def\bibliography#1{\PUSH{\jobname.bbl}\zzzxxxbibliography{#1}\POP}
\output\expandafter{\expandafter\SRCOKfalse\the\output}
\begin{document}
\author{Karoline Disser}
\address{Weierstrass Institute, Mohrenstr. 39, 10117 Berlin, Germany}
\email{karoline.disser@wias-berlin.de}
\author{Martin Meyries}
\address{Martin-Luther-Universit\"at Halle-Wittenberg, Institut f\"ur Mathematik, 06099 Halle (Saale), Germany}
\email{martin.meyries@mathematik.uni-halle.de}
\author{Joachim Rehberg}
\address{Weierstrass Institute, Mohrenstr. 39, 10117 Berlin, Germany}
\email{joachim.rehberg@wias-berlin.de}

\title[Mixed boundary conditions and diffusion on Lipschitz interfaces]
{A unified framework for parabolic equations with mixed boundary conditions and \\diffusion on interfaces}

\keywords{Parabolic equations, mixed boundary conditions, dynamical boundary conditions, 
degenerate
diffusion, surface diffusion, power weights, maximal parabolic 
$L^p$-regularity, Lipschitz domain}

\subjclass{35K20, 35M13, 35R05, 35K65, 35R01}

\thanks{K.D. was supported by ERC-2010-AdG no. 267802 ``Analysis of Multiscale 
Systems Driven by Functionals''. M.M. was partially supported by DFG-project ME 3848/1-1 
and would like to thank the Weierstrass Institute for its kind hospitality during a research stay.}

\maketitle

\begin{abstract}
In this paper we consider scalar parabolic equations in a general non-smooth
 setting emphasizing interface conditions and mixed boundary conditions. 
In particular, we allow for dynamics and diffusion on a Lipschitz interface
 and on the boundary, where the diffusion coefficients are only assumed 
to be bounded, measurable and positive semidefinite. In the bulk, we 
additionally take into account diffusion coefficients which may degenerate
towards a Lipschitz surface. For this problem class, we introduce a unified 
functional analytic framework based on sesquilinear forms and 
show maximal $L^p$-regularity and bounded $H^\infty$-calculus for the corresponding operator. 
\end{abstract}

\section{Introduction}
This paper presents a unified framework for a general class of linear 
inhomogeneous mixed initial-boundary value problems of the form
\begin{alignat}{3}
\zeta \partial_t u - \dv(\mu_\Omega \nabla u) & =  f_{\Omega}  & \qquad & \text{in }\,J
\times (\Omega\setminus\Sigma), \label{e-parabol}\\
u & =  0  & &\text {on }\,  J \times \Gamma_D, \label{e-Diri}\\
\nu \cdot \mu_\Omega \nabla u & = 0 && \text{on }\, J \times \Gamma_N, 
\label{e-Neumann}\\ 
\zeta \partial_t u - \dv_{\Gamma_\dyn} (\mu_{\Gamma_\dyn} \nabla_{\Gamma_\dyn} u)
 + \nu \cdot \mu_\Omega \nabla u  & =   f_{\Gamma_{\dyn}} &  & 
\text{on }\, J \times \Gamma_\dyn, \label{e-robin}\\
\zeta \partial_t u - \dv_\Sigma (\mu_\Sigma \nabla_\Sigma u)+ 
 [\nu_\Sigma \cdot \mu_\Omega \nabla u]  & =   f_\Sigma   && \text{on }\,
 J \times \Sigma, \label{e-xi-eq}\\
u(0) & =  u_0  & &\text{in }\, (\Omega\setminus \Sigma) \times \Gamma_{\dyn}
\times \Sigma. \label{e-initial}
\end{alignat}
Here $J=(0,T)$ is a time interval and $\Omega \subset \R^d$ is a bounded 
domain with boundary $\partial \Omega$ and with outer unit 
normal vector field $\nu$.
The boundary is disjointly decomposed into a closed Dirichlet part $\Gamma_D$, 
a Neumann part $\Gamma_N$ 
and a dynamic part $\Gamma_\dyn$, i.e.,  
$$\partial\Omega = \Gamma_D \, \dot{\cup} \, \Gamma_N \, \dot{\cup}\, \Gamma_\dyn.$$ 
Moreover, $\Sigma \subset \Omega$ is a $(d-1)$-dimensional hypersurface with unit 
normal vector field $\nu_\Sigma$, 
on which a further dynamic condition is imposed, and $[\nu_\Sigma \cdot \mu_\Omega \nabla u]$
 denotes the jump of 
$\nu_\Sigma \cdot \mu_\Omega \nabla u$ across $\Sigma$. The surface gradients on 
$\Gamma_\dyn$ and on $\Sigma$ are denoted by
$\nabla_{\Gamma_\dyn}$ and  $\nabla_{\Sigma}$. Accordingly, we write $\dv_{\Gamma_\dyn}$ 
and $\dv_\Sigma$ for the surface divergences,
  such that $\Delta_{\Gamma_\dyn} = \dv_{\Gamma_\dyn} \nabla_{\Gamma_\dyn}$ and 
$\Delta_\Sigma = \dv_{\Sigma}\nabla_\Sigma$ are the 
  Laplace-Beltrami operators. The diffusion coefficients $\mu_\Omega$, 
$\mu_{\Gamma_\dyn}$ and $\mu_\Sigma$ are matrix-valued, 
  and the relaxation coefficent $\zeta$ is positive, bounded, and bounded away from 
zero. The external forces $f_{\Omega}$, $f_{\Gamma_{\dyn}}$
  and $f_\Sigma$ as well as the initial data $u_0$ are assumed to be given.
Initial data have to be prescribed at $\Omega\setminus \Sigma$, $\Gamma_{\dyn}$
 and $\Sigma$ due to the corresponding dynamic equations on these sets.

 \begin{figure}[h]
    \centering
\includegraphics[scale=0.57]{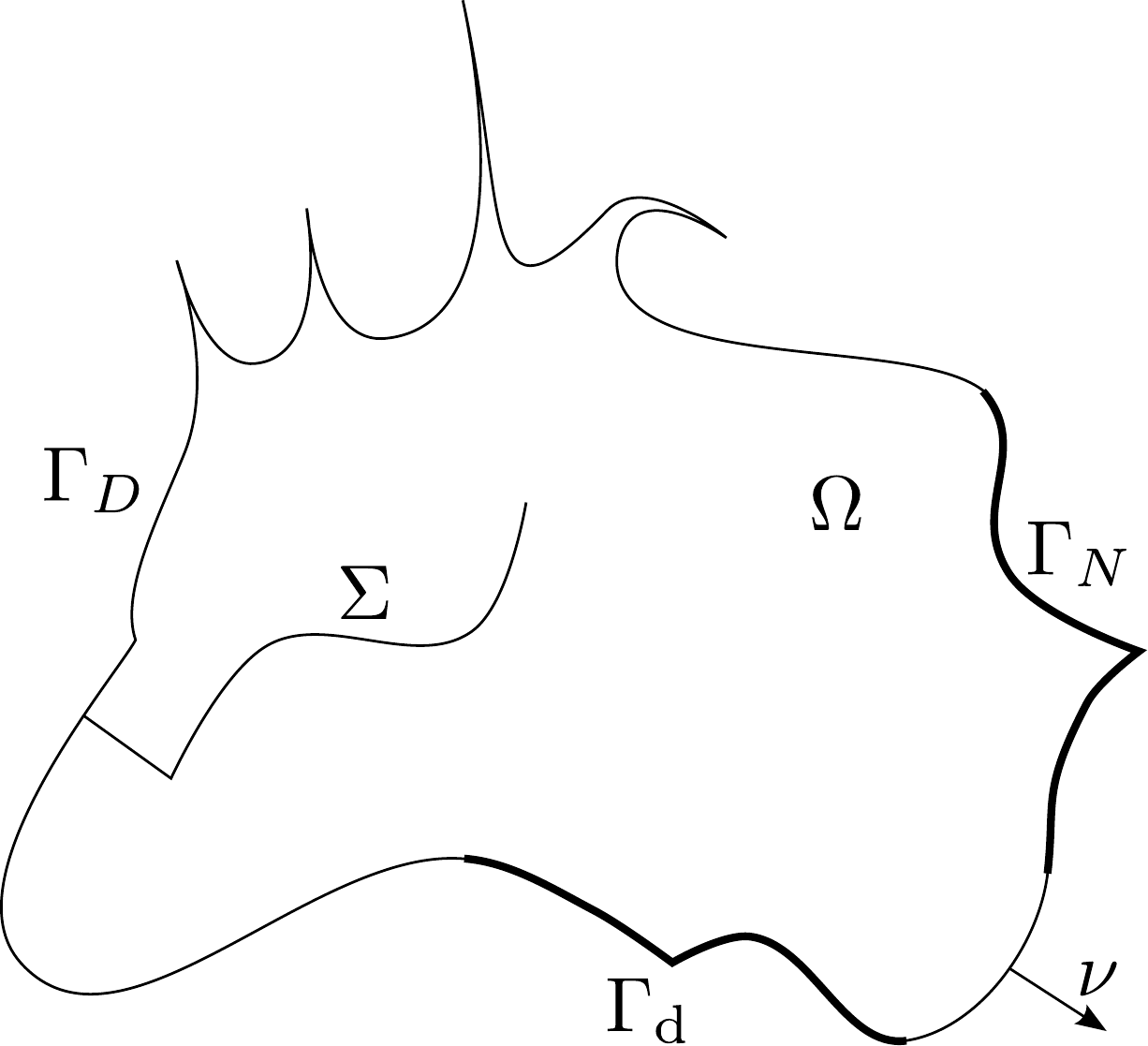}
\caption{Example of a domain $\Omega$ with interface $\Sigma$ and boundary
  $\partial \Omega = \Gamma_D \, \dot{\cup} \, \Gamma_N \, \dot{\cup}\, \Gamma_\dyn$}
    \label{fig:Bild}
\end{figure}

Well-posedness and qualitative properties of parabolic problems with dynamic 
boundary conditions are well-studied, 
  see for example \cite{AIMT, AQRB, BC, EMR, Esc, FGGR, Gal1, Gal2, Hin, HKR, IK, MMS, SW, VaVi, VV}.
  Here, mostly the case of a smooth boundary is considered. Nonlinear degeneracy
 in the diffusion is treated in
 \cite{AIMT, Gal1, IK}. Mixed boundary conditions on non-smooth domains and 
dynamical Robin conditions are also treated in 
 \cite{Nittka, Nittka2}, in a setting which may include inhomogeneities in the
 Neumann or Dirichlet parts.
 Mixed Dirichlet-Wentzell boundary conditions with a smooth Wentzell boundary are
 treated in \cite{VV}.\\
The present paper extends the results of \cite{EMR} in two directions: we consider surface
 diffusion on Lipschitz boundaries and interfaces with diffusion coefficients which
 may degenerate arbitrarily, and further allow the bulk diffusion coefficients to 
degenerate moderately towards another Lipschitz hypersurface. In addition, we still take into account mixed boundary
 conditions nonsmooth diffusion and relaxation coefficients. 
Inhomogeneous Neumann boundary conditions, as well as boundary parts and interfaces evolving in time
are not included in our approach, compare \cite{Nittka2}. We say that the diffusion is "degenerate",
 if the coefficient matrices
 $\mu_\Omega, \mu_{\Gamma_{\dyn}}, \mu_\Sigma$ are not strongly elliptic. In fact, we only require
 $\mu_{\Gamma_{\dyn}}, \mu_\Sigma$ to be non-negative,
 and thus surface diffusion may be absent or degenerate in a very general sense. The 
bulk coefficient matrix $\mu_\Omega$ may also degenerate
 but must still imply bulk regularity of the solution which allows for a trace function
 at $\Gamma_{\dyn}, \Sigma$. Examples of this situation
are given below. 

We present a unified setting based on recent abstract results for sesquilinear forms from \cite{AE}, 
which handles all these nonsmooth scenarios and their combinations at once. 

Let us give more details on the assumptions for the geometry and the coefficients. 
The boundary parts $\Gamma_D$, $\Gamma_N$ and $\Gamma_{\dyn}$ are allowed to meet,
    and also the interface $\Sigma$ may meet any of the boundary parts 
$\Gamma_D, \Gamma_N, \Gamma_\dyn$. 
 Except at points close to the remainder of $\partial \Omega$, no conditions on the Dirichlet part $\Gamma_D$ are imposed. 
    
    The diffusion coefficients $\mu_\Omega$, $\mu_{\Gamma_{\dyn}}$ and $\mu_\Sigma$
 do not have to be symmetric and are 
    assumed to be measurable, bounded and non-negative. To describe their degeneracies in a precise
 way, we assume  pointwise estimates of the form
$$( \mu(x) \xi,\xi) \geq c_1\mu^*(x)|\xi|^2, \quad \xi\in \R^{d},
 \qquad \|\mu(x)\|_{\mathcal L(\R^{d})} \leq c_2 \mu^*(x),
$$
where $\mu$ stands for $\mu_\Omega$, $\mu_{\Gamma_{\dyn}}$ or $\mu_\Sigma$, 
respectively, and $\mu^*$ is in each case a measurable, 
      bounded and nonnegative function. Regarding surface diffusion, we may 
allow for \emph{arbitrary} supports of 
      $\mu_{\Gamma_\dyn}^*$ and $\mu_{\Sigma}^*$. This is to be expected as the well-posedness
 of equations
      \eqref{e-parabol}--\eqref{e-initial} should not depend
      on the presence of surface diffusion. However, it is a considerable part of our
 work to give a suitable definition of surface 
      gradients which captures the exact presence of diffusion on arbitrary subsets
 of the surface, which may still yield regularization 
      where diffusion is present and which still allows us to show maximal regularity
 of the abstract Cauchy problem.

Concerning bulk diffusion, our setting is naturally more restrictive and we only 
consider a class of examples of degenerate diffusion.
For the function $\mu_\Omega^*$, we assume that
\begin{equation}\label{dist}
\mu_\Omega^*(x) = \dist(x,S)^\gamma, \qquad x\in \Omega,
  \end{equation}
where $S\subset \overline{\Omega}$ is an arbitrary $(d-k)$-dimensional Lipschitz 
submanifold of $\R^d$, $1\leq k \leq d$,
      and the exponent is in the range $0<\gamma < k$, which makes $\mu_\Omega^*$ a
 Muckenhoupt weight of class $\mathcal{A}_2$. 
      Of particular interest is the case when $S \cap (\overline{\Gamma_{\dyn} \cup \Sigma})
 \neq \emptyset$,
      i.e., when diffusion degenerates towards $\Gamma_{\dyn}$ or $\Sigma$, but may or may
 not occur along $\Gamma_{\dyn}$ or $\Sigma$. 
     In general, in this case we will have to assume that $\gamma <1$.

We describe the setting in which \eqref{e-parabol}--\eqref{e-initial} is realized. 
The basis of the approach is the sesquilinear form 
\begin{equation} \label{e-form0}
\mathfrak t(u,v) = \int_\Omega (\mu_\Omega \nabla u, \overline{\nabla v})\,dx 
+ \int_{\Gamma_\dyn} \big (\mu_{\Gamma_\dyn}\nabla_{\Gamma_\dyn} u, 
\overline{\nabla_{\Gamma_\dyn} v}\big) \,d\mathcal H_{d-1} 
+ \int_{\Sigma} \big(\mu_\Sigma\nabla_\Sigma u, \overline{\nabla_\Sigma v}\big) 
\,d\mathcal H_{d-1},
\end{equation}
where $\mathcal H_{d-1}$ denotes the $(d-1)$-dimensional Hausdorff measure. 
The surface gradients $\nabla_{\Gamma_\dyn}$ and $\nabla_\Sigma$ on the Lipschitz 
surfaces $\Gamma_\dyn$ and $\Sigma$ are introduced in a simple,
    straightforward way in terms of local coordinates, such that the definitions 
coincide with the corresponding well-known objects in 
    a smooth situation (see Section \ref{sec:SG}). In order to obtain a suitable weak
 formulation of \eqref{e-parabol}--\eqref{e-initial},
    we define the domain of the form $\ft$ as the completion of 
$$
    C_D^\infty(\Omega) :=  \big\{ u|_{\Omega} : u \in C_c^{\infty}({\R}^d), \;
 (\supp u )\cap \Gamma_D
  = \emptyset \big\},
    $$ with respect to 
$$
 \|u\|_{\dom(\ft)}^2 := \|u\|_{W^{1,2}(\Omega,\mu_\Omega^*)}^2 + 
\|\nabla_{\Gamma_{\dyn}} u\|_{L^2(\Gamma_{\dyn}, \mu_{\Gamma_{\dyn}}^*)}^2 +
 \|\nabla_{\Sigma} u\|_{L^2(\Sigma, \mu_{\Sigma}^*)}^2.
$$
Here, $W^{1,2}(\Omega,\mu_\Omega^*)$ is a Sobolev space with weight 
$\mu_\Omega^*$ in the gradient norm, 
     and $L^2(\Gamma_{\dyn}, \mu_{\Gamma_{\dyn}}^*)$ and $L^2(\Sigma, \mu_{\Sigma}^*)$ 
     are Lebesgue spaces equipped with the weights $\mu_{\Gamma_{\dyn}}^*$ and $\mu_\Sigma^*$.

Based on the results of \cite{AE}, to the form $\ft$, we associate an operator
 $A_2$ on the Lebesgue space
\[
\L^2 = L^2\big((\Omega \setminus \Sigma) \cup \Gamma_{\dyn} \cup \Sigma, (dx+d\mathcal H_{d-1})\big)
= L^2(\Omega\setminus \Sigma)\oplus L^2(\Gamma_{\dyn}) \oplus L^2(\Sigma).
\]
In order to realize this setting, one must make sure that for every 
$v \in \dom(\ft)$, there are traces $\tr_\Sigma\, v \in L^2(\Sigma)$ and
$\tr_{\Gamma_{\dyn}} \,v \in L^2(\Gamma_{\dyn})$ such that we obtain a triple $(v,v_\Sigma ,v_{\Gamma_{\dyn}}) \in \L^2$, where
here and in the following, we often use the notation $v_{\mathcal{M}}$ to indicate the restriction or trace of $v$ on a set $\mathcal{M}$ if it is well-defined.  
The constitutive relation for $A_2u$ is then given by
\begin{equation} \label{e-constitut}
\langle A_2u, (\phi_{\Omega}, \phi_\Sigma, \phi_{\Gamma_{\dyn}})\rangle_{\mathbb {L}^2} = \ft(u,\phi),
\end{equation}
for all test functions $\phi \in C_D^\infty(\Omega)$.

 If bulk diffusion degenerates towards $\Gamma_{\dyn}$ or $\Sigma$ as in \eqref{dist},
we rely on the weighted Sobolev embedding
$$W^{1,2}(\R^d,\dist(\cdot,S)^\gamma)\subset W^{\theta,q}(\R^d), \qquad 1-\frac{d+\gamma}{2}
 \geq \theta - \frac{d}{q}, \qquad q\geq 2,$$
which seems to be new in this explicit form and is deduced from the very 
general embedding results in \cite{HS} 
(see Proposition \ref{prop:Sob-weight} and \cite{AKS, Schumacher} for related results about traces of Muckenhoupt weighted spaces).
Here, $W^{\theta,q}(\R^d)$ denotes the usual Slobodetskii space.\\
It turns out that $-A_2$ generates an analytic $C_0$-semigroup $T_2(\cdot)$ of 
contractions on $\L^2$, see Proposition \ref{prop:AE1}.
This already yields the solvability of our realization of \eqref{e-parabol}--\eqref{e-initial} for external forces
$(f_{\Omega}, f_{\Gamma_{\dyn}}, f_\Sigma)$ in $L^2(J;\L^2)$ and initial data $u_0\in \L^2$. 
We emphasize that the components of the initial data need not be related, but that the semigroup regularizes to $u(t)\in \dom(\ft)$ for all
$t>0$. \\
In order to treat semilinear problems, $\L^2$-estimates of the solution will in 
general not be sufficient, due to the lack of embeddings for the fractional power domains of
$A_2$ into spaces of bounded functions. Thus, we first extend the definition of $A_2$ consistently 
to the whole $\L^p$-scale, $p\in [1,\infty]$. This is achieved by showing that $T_2(\cdot)$
is $\L^\infty$-contractive (see Proposition \ref{p:markovian}), which implies the existence
of a consistent contraction semigroup $T_p(\cdot)$ on $\L^p$ by interpolation and duality.
For $p\in(1,\infty)$, the negative generator $A_p$ of the analytic semigroup $T_p(\cdot)$ is then the desired 
consistent extension/restriction of $A_2$ to $\L^p$.
The analyticity of $T_p(\cdot)$ for $p\in (1,\infty)$ together with the contractivity of 
$T_p(\cdot)$ for $p\in [1,\infty]$ now allow us to apply a deep result from harmonic analysis 
due to \cite{Duong, KW, Lamberton, Merdy, Weis} (see also \cite[Proposition 2.2]{LX}) 
to conclude that $A_p$ admits a bounded holomorphic functional calculus and maximal Lebesgue 
regularity (see \cite{Dore, KuWe, Pruess} for  surveys on these topics). 

Hence, from an 
abstract point of view, the realization is as good as it can be, despite of the variety
of nonsmooth effects it takes into account. The precise formulation is given in 
Theorems \ref{thm:1} and \ref{thm:mr2}. 
Employing again that $A_p$ is given on a scalar $L^p$-space, we show that the 
multiplication with the inverse relaxation coefficient $\zeta^{-1}$ 
does not change the described properties. 
Finally, embeddings of the type 
\begin{equation}\label{emb}
\dom(A^\theta_p) \subset \mathbb{L}^\infty,
  \end{equation}
for $p > 2$ sufficiently large and $\theta$ sufficiently close to $1$ are obtained in 
Section \ref{sec:emb} from semigroup estimates and an integral formula for negative
fractional powers of $A_p$. We can quantify how the presence of surface diffusion 
may improve \eqref{emb}, whereas degeneracy in the bulk diffusion may clearly decrease 
the integrability exponent. It is an advantage of our unified framework that we can 
see how these effects may interact locally. In essence, we restrict our considerations to the linear case
in this paper, and refer e.g. to  \cite[Ch.~2]{henry}, \cite{luna} for results on how embeddings of type \eqref{emb} quantify the solvability of related semilinear problems.\\

This paper is organized as follows. We start in Section \ref{sec:Heu} with a heuristic of how our functional analytic setting is related to \eqref{e-parabol} -- \eqref{e-initial}.
In Section \ref{sec:SG}, we introduce tangent spaces and the surface gradient for Lipschitz
hypersurfaces in graph representation. 
In order to separate technical difficulties, in Section \ref{sec:nondeg} we consider the 
case of nondegenerate bulk diffusion only, while in Section \ref{sec:deg} we treat 
degenerate bulk diffusion. In Section \ref{sec:emb}, embeddings of fractional power domains into spaces of bounded 
functions are investigated. \smallskip

\textbf{Notation.} Generic positive constants are denoted by $C$ or $c$.
By $\mathcal L(\R^d)$ we designate the space of linear operators on $\R^d$, which we may 
identify with the set of $(d\times d)$-matrices via the canonical basis. The Euclidian scalar
 product of $x,y \in \R^d$ is denoted by $x \cdot y$ or $(x,y)$. For $p\in [1,\infty]$,
    the usual complex Lebesgue space is denoted by $L^p(\Omega)$.

\section{Heuristics}\label{sec:Heu}
Since the form method in \cite{AE} is very recent and presently not commonly known we give a detailed 
heuristics why the definition of the form $\mathfrak t$, together with the relation 
\eqref{e-constitut}
provides the adequate functional analytic setting for the equations \eqref{e-parabol} - 
\eqref{e-initial}. This is closely related to the classical arguments for weak formulations of boundary 
value problems, cf. for example 
\cite[Ch.~II.2]{ggz}. 
In this section, we make additional regularity assumptions. 
Let $\Omega$ be a smooth domain and let $\Sigma$ be extendible to a Lipschitz hypersurface $\Lambda = 
\overline{\Sigma} \cup (\Lambda\setminus \Sigma)$ which cuts $\Omega$ into two Lipschitz subdomains 
$\Omega = \Omega_+ \cup \Lambda \cup \Omega_-$. Let $\nu_\Sigma$ denote the outer normal vector field 
of $\Omega_+$ at all of  $\Lambda$. Assume that the equation 
\begin{equation}\label{Auf-eq}
	A_2u = f
	\end{equation}
	is satisfied in $\mathbb{L}^2$ and let $\phi \in C_D^\infty(\Omega)$ with the canoncical
 embedding $(\phi_\Omega, \phi_\Sigma, \phi_{\Gamma_\dyn}) \in \mathbb{L}^2$. Then by definition, 
\begin{equation}\label{f-eq}
	\langle f, \phi \rangle_{\mathbb{L}^2} = \int_\Omega f \overline{\phi} \,dx+ 
\int_\Sigma f_\Sigma \overline{\phi_\Sigma} \,d\mathcal H_{d-1}+ \int_{\Gamma_{\dyn}} f_{\Gamma_{\dyn}}
 \overline{\phi_{\Gamma_{\dyn}}} \,d\mathcal H_{d-1},
	\end{equation}
	and
	\begin{equation}\label{Auf-t-eq}
		 \langle A_2u, \phi \rangle_{\mathbb{L}^2} = \int_\Omega (\mu_\Omega \nabla u,
 \overline{\nabla \phi})\,dx + \int_{\Gamma_\dyn} \big (\mu_{\Gamma_\dyn}\nabla_{\Gamma_\dyn} u, 
		\overline{\nabla_{\Gamma_\dyn} \phi}\big) \,d\mathcal H_{d-1} 
		+ \int_{\Sigma} \big(\mu_\Sigma\nabla_\Sigma u, \overline{\nabla_\Sigma \phi}\big) 
		\,d\mathcal H_{d-1}.
	\end{equation}
Now we additionally assume that the restrictions $u_+$ and $u_-$ of $u$ to $\Omega_+$ and  $\Omega_-$
 satisfy $u_+ \in C^1(\overline{\Omega_+})$ and $u_- \in C^1(\overline{\Omega_-})$ and that on  
$\Omega \setminus \Sigma$, we have $u \in C^2(\Omega\setminus\Sigma)$. We note that 
$$
\int_\Omega (\mu_\Omega \nabla u, \overline{\nabla v})\,dx = \int_{\Omega_+} (\mu_\Omega \nabla u_+,
 \overline{\nabla \phi_+})\,dx + \int_{\Omega_-} (\mu_\Omega \nabla u_-, \overline{\nabla \phi_-})\,dx 
$$
and apply Gauss' Theorem to each of these terms to get
\begin{align*}
\int_\Omega (\mu_\Omega \nabla u, \overline{\nabla v})\,dx  & = \int_{\Omega_+} - \dv(\mu_\Omega \nabla u_+)
 \overline{\nabla \phi_+}\,dx + \int_{\Omega_-} - \dv(\mu_\Omega \nabla u_-) \overline{\phi_-}\,dx \\
& + \int_{\Gamma_N} (\nu \cdot \mu_\Omega \nabla u) \overline{\phi_{\Gamma_N}} \,d\mathcal H_{d-1} +
 \int_{\Gamma_{\dyn}} (\nu\cdot \mu_\Omega \nabla u) \overline{\phi_{\Gamma_d}} \,d\mathcal H_{d-1} \\
& + \int_{\Sigma} [\nu_\Sigma \cdot \mu_\Omega \nabla u] \overline{\phi_{\Sigma}} \,d\mathcal H_{d-1}
 + \int_{\Lambda \setminus \Sigma} [\nu_\Sigma \cdot \mu_\Omega \nabla u] 
\overline{\phi_{\Lambda\setminus\Sigma}} \,d\mathcal H_{d-1},
\end{align*}
where it follows from the regularity assumptions on $u$ that the last term vanishes. 
Additionally applying the manifold Gauss Theorem, cf. \cite{MMT} for a non-smooth version, 
to the last two integrals in \eqref{Auf-t-eq}, we derive the expression
\begin{align}\label{sum-eq}
\langle A_2u, \phi) \rangle_{\mathbb{L}^2}  & = \int_{\Omega} - \dv(\mu_\Omega \nabla u) \overline{\phi}\,dx 
+ \int_{\Gamma_N} (\nu \cdot \mu_\Omega \nabla u) \overline{\phi_{\Gamma_N}} \,d\mathcal H_{d-1} \\
 \nonumber
& + \int_{\Gamma_{\dyn}} (\nu\cdot \mu_\Omega \nabla u) \overline{\phi_{\Gamma_\dyn}} \,d\mathcal H_{d-1}
 + \int_{\Sigma} [\nu_\Sigma \cdot \mu_\Omega \nabla u] \overline{\phi_\Sigma} \,d\mathcal H_{d-1} \\
 \nonumber
& + \int_{\Gamma_\dyn} - \dv_{\Gamma_\dyn} (\mu_{\Gamma_\dyn}\nabla_{\Gamma_\dyn} u) 
\overline{\phi_{\Gamma_\dyn}} \,d\mathcal H_{d-1} 
		+ \int_{\Sigma} - \dv_\Sigma(\mu_\Sigma\nabla_\Sigma u) \overline{\phi_\Sigma}
\,d\mathcal H_{d-1} \\ \nonumber
		& +  \int_{\partial \Gamma_\dyn} (\nu_{\partial \Gamma_{\dyn}} \cdot 
\mu_{\Gamma_\dyn} \nabla_{\Gamma_\dyn} u_{\Gamma_\dyn}) 
		\overline{\phi_{\partial\Gamma_\dyn}} \,d\mathcal H_{d-2} + 
\int_{\partial \Sigma} (\nu_{\partial \Sigma}\cdot \mu_\Sigma\nabla_\Sigma u_\Sigma )
\overline{\phi_{\partial\Sigma}}\,d\mathcal H_{d-2} 
\end{align}
to be balanced with \eqref{f-eq}. \\
Choosing $\phi \in C_c^\infty (\Omega)$ yields $$f_\Omega = -\mathrm{div}(\mu_\Omega \nabla u)
 \in L^2(\Omega).$$ The Neumann and Dirichlet boundary conditions on $\Gamma_N$ and $\Gamma_D$
 follow, for example, as in \cite[Ch.~II.2]{ggz}, using that each Neumann part of the boundary
 of $\Omega$ satisfies an extension property. The remaining equalities $$f_{\Gamma_\dyn} = 
-\mathrm{div}_{\Gamma_\dyn}(\mu_{\Gamma_\dyn} \nabla_{\Gamma_\dyn} u_{\Gamma_{\dyn}}) + 
\nu \cdot \mu_{\Omega} \nabla u \in L^2(\Gamma_{\dyn})$$ and $$f_{\Sigma} = 
-\mathrm{div}_{\Sigma}(\mu_{\Sigma} \nabla_{\Sigma} u_{\Sigma}) + [\nu_\Sigma \cdot \mu_{\Omega} \nabla u]
 \in L^2(\Sigma)$$ are then identified accordingly. The last two terms in \eqref{sum-eq} 
require some more explanation. If $\partial\Gamma_{\dyn} \cup \partial\Sigma \subset 
\overline{\Omega} \setminus \Gamma_D$, we consider them to be enforcing (generalized) 
homogeneous Neumann boundary conditions on $\partial \Gamma_\dyn$ and $\partial \Sigma$. At points
 where $\partial \Gamma_\dyn$ or $\partial \Sigma$ and $\Gamma_D$ intersect, we assign 
homogeneous Dirichlet boundary conditions. In particular, in the definition of $C_D^\infty(\Omega)$,
 any subset of points in $\partial \Gamma_\dyn$ and $\partial \Sigma$ may be included to 
enforce these Dirichlet conditions. We did not include these conditions in equations \eqref{e-parabol} - 
\eqref{e-initial} to keep the presentation simple and because in general, our regularity 
assumptions on $\Gamma_\dyn$ and $\Sigma$ are insufficient to deduce them in the usual way. 


\section{The surface gradient on Lipschitz hypersurfaces}\label{sec:SG}
In order to define surface diffusion on $\Sigma$ and $\Gamma_{\dyn}$, in this section we introduce tangent spaces and the surface gradient for a Lipschitz hypersurface $\mathcal S$ in graph representation in an elementary way. 
The idea is that Lipschitz coordinates are differentiable almost everywhere, which allows us to give definitions in coordinates analogous to the smooth case. 
Hence for smooth $\SSS$ we automatically recover the standard notions, see \cite[Chapter VII]{AmEs} and \cite{Hebey, Jost} for basic accounts. 
For Lipschitz surfaces we also refer to \cite{EG, Grisvard, NS, Simon}.

\subsection{Lipschitz hypersurfaces} Let $\SSS\subset \R^{d}$ be a \emph{Lipschitz hypersurface in graph representation}. This means that for each $x\in \SSS$ 
there are  \emph{Lipschitz-graph coordinates} $(\g,U)$ and an  open neighbourhood $V$  of $x$ in $\R^d$ such that $U \subset \R^{d-1}$ is open and $\g:U \to \SSS\cap V$
is bijective and of the form 
$$\g(y) = Q\left ( \begin{matrix} y\\h(y)\end{matrix}\right) + x^*, \qquad y\in U,$$
where $Q\in \mathcal L(\R^d)$ is orthogonal, $x^*\in \R^d$ is a fixed vector and $h:U \to \R$ is Lipschitz continuous. For this and equivalent definitions we refer to \cite[Section 2]{NS}. We endow $\mathcal S$ with the Hausdorff measure $\mathcal H_{d-1}$. Employing that the topology of $\R^d$ has a countable basis, standard arguments show that there is an at most countable number of Lipschitz graph coordinates $(\g_\alpha,U_\alpha)$  such that $\SSS \subseteq \bigcup_\alpha \g_\alpha(U_\alpha)$, see the proof of \cite[Theorem 2.15]{NS}.

By Rademacher's theorem (see \cite[Theorem 3.1.2]{EG}), Lipschitz coordinates $\g$ are almost everywhere  differentiable on $U$ in the classical sense and one has $g\in W^{1,\infty}(U,\R^d)$, where
$$\g'(y) = Q \left ( \begin{matrix} \text{id}_{d-1} \\h'(y) \end{matrix}\right)\in \mathcal L(\R^{d-1},\R^d)$$
at points $y\in U$ where $\g$ is differentiable.
Observe that $\g'(y)$ is injective and has rank $d-1$. Hence the corresponding \emph{metric tensor} $\G:U\to \mathcal L(\R^{d-1})$, defined by
$$\G(y) = \g'(y)^T \g'(y) = \big((\partial_i \g(y), \partial_j \g(y))\big)_{ij},$$
is for almost all $y\in U$ symmetric and positive definite. With the usual abuse of notation we write $\G = (\g_{ij})_{ij}$, and $\G^{-1} = (\g^{ij})_{ij}$ for the pointwise inverse of $\G$.

We call Lipschitz-graph coordinates $\g$ \emph{regular} for $x\in \SSS$ if $\g$ is differentiable at $y = \g^{-1}(x)$.
If such regular coordinates exist, we call $x$ regular. 

\begin{lemma} Let $\SSS$ be a Lipschitz hypersurface in graph representation. Then $\mathcal H_{d-1}$-almost every point $x\in \SSS$ is regular.
\end{lemma}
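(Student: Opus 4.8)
The plan is to combine the (at most countable) cover of $\SSS$ by Lipschitz-graph charts, already established above, with Rademacher's theorem and the fact that Lipschitz maps do not increase $(d-1)$-dimensional Hausdorff measure.

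First I would fix the at most countable family of Lipschitz-graph coordinates $(\g_\alpha, U_\alpha)$ with $\SSS \subseteq \bigcup_\alpha \g_\alpha(U_\alpha)$. For each $\alpha$, let $N_\alpha \subseteq U_\alpha$ denote the set of points at which $\g_\alpha$ fails to be differentiable. Since each $\g_\alpha$ is Lipschitz on $U_\alpha \subseteq \R^{d-1}$, Rademacher's theorem gives that $N_\alpha$ is a Lebesgue-null set in $\R^{d-1}$, equivalently $\mathcal H_{d-1}(N_\alpha) = 0$. Next I would push this null set forward: because $\g_\alpha$ is Lipschitz with some constant $L_\alpha$, the standard estimate $\mathcal H_{d-1}(\g_\alpha(A)) \le L_\alpha^{\,d-1}\, \mathcal H_{d-1}(A)$ for all $A \subseteq U_\alpha$ yields $\mathcal H_{d-1}(\g_\alpha(N_\alpha)) = 0$. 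Thus the image of the non-differentiability set in each single chart is $\mathcal H_{d-1}$-negligible on $\SSS$.

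The key step is to identify the set of non-regular points. Let $E = \{x \in \SSS : x \text{ is not regular}\}$. If $x \in E$ and $x \in \g_\alpha(U_\alpha)$ for some $\alpha$, then—since regularity of $x$ only requires the existence of a single chart differentiable at its preimage—non-regularity forces $\g_\alpha$ to be non-differentiable at $\g_\alpha^{-1}(x)$, i.e. $\g_\alpha^{-1}(x) \in N_\alpha$ and hence $x \in \g_\alpha(N_\alpha)$. This shows $E \cap \g_\alpha(U_\alpha) \subseteq \g_\alpha(N_\alpha)$ for every $\alpha$, and since the charts cover $\SSS$ we obtain $E \subseteq \bigcup_\alpha \g_\alpha(N_\alpha)$. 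Countable subadditivity of $\mathcal H_{d-1}$ then gives $\mathcal H_{d-1}(E) = 0$, which is the assertion.

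The point requiring the most care is the logical handling of the definition of regularity: a point is regular as soon as \emph{one} chart works, so to control $E$ I must argue that a non-regular point fails in \emph{every} chart whose image contains it—this is exactly what validates the inclusion $E \cap \g_\alpha(U_\alpha) \subseteq \g_\alpha(N_\alpha)$. Beyond this, the only genuinely analytic input is the non-increase of $(d-1)$-dimensional Hausdorff measure under Lipschitz maps, which is what transfers the Lebesgue-null conclusion of Rademacher's theorem from the parameter domains $U_\alpha$ to the surface $\SSS$ itself.
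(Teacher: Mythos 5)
Your proof is correct and follows essentially the same route as the paper's: cover $\SSS$ by countably many Lipschitz-graph charts, apply Rademacher's theorem in each parameter domain, push the null sets forward using the estimate $\mathcal H_{d-1}(\g_\alpha(A)) \le \mathrm{Lip}(\g_\alpha)^{d-1}\mathcal H_{d-1}(A)$, and conclude by countable subadditivity via the inclusion $N \cap \g_\alpha(U_\alpha) \subseteq \g_\alpha(N_\alpha)$. Your explicit remark that a non-regular point must fail in \emph{every} chart containing it is a correct (and slightly more careful) justification of that inclusion, which the paper uses without comment.
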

\begin{proof} Let $N\subset \SSS$ be the set of points which are not regular. Take at most countable many coordinates $(\g_\alpha, U_\alpha)$ such that $\SSS \subseteq \bigcup_\alpha V_\alpha$ for $V_\alpha = \g_\alpha(U_\alpha)$. Then $\mathcal H_{d-1}(N) \leq \sum_\alpha \mathcal H_{d-1}( N \cap V_\alpha)$. Let further $N_\alpha \subset U_\alpha$ be the set of points where $\g_\alpha$ is not differentiable. Then $\mathcal H_{d-1}(N_\alpha) = 0$ by Rademacher's theorem. Using $N\cap V_\alpha \subseteq \g_\alpha(N_\alpha)$ and \cite[Theorem 2.4.1/1]{EG}, for each $\alpha$ we obtain 
$$\mathcal H_{d-1}( N \cap V_\alpha) \leq \mathcal H_{d-1}(\g_\alpha(N_\alpha)) \leq \text{Lip}(\g_\alpha)^{d-1} \mathcal H_{d-1}(N_\alpha) =0,$$
where $\text{Lip}(\g_\alpha)$ is the Lipschitz constant of $\g_\alpha$. This shows $\mathcal H_{d-1}(N) = 0$.
\end{proof}

As another preparation we consider the properties of \emph{transition maps}.

\begin{lemma} \label{lem:tech-S} Let $(\emph{\g}_\alpha,U_\alpha)$ and $(\emph{\g}_\beta,U_\beta)$ be Lipschitz-graph coordinates for $\SSS$ which are both regular for $x\in \SSS$. Set $y_\alpha = \emph{\g}_\alpha^{-1}(x)\in U_\alpha$ and $y_\beta = \emph{\g}_\beta^{-1}(x)\in U_\beta$. Then the following assertions hold true.
\begin{itemize}
\item[\text{(a)}] The transition map $\emph{\g}_\beta^{-1}\circ \emph{\g}_\alpha$ is differentiable at $y_\alpha$. The derivative $(\emph{\g}_\beta^{-1}\circ \emph{\g}_\alpha)'(y_\alpha)\in \mathcal L(\R^{d-1})$ is invertible with inverse $(\emph{\g}_\alpha^{-1}\circ \emph{\g}_\beta)'(y_\beta).$
\item[\text{(b)}] The derivatives $\emph{\g}_\alpha'(y_\alpha)$ and $\emph{\g}_\beta'(y_\beta)$ have the same images in $\R^d$. We have $v = \emph{\g}_\alpha'(y_\alpha)\xi_\alpha$ for $\xi_\alpha\in \R^{d-1}$ if and only if $v = \emph{\g}_\beta'(y_\beta)\xi_\beta$ for $\xi_\beta =(\emph{\g}_\beta^{-1}\circ \emph{\g}_\alpha)'(y_\alpha)\xi_\alpha$.
\item[\text{(c)}] For the metric tensors $\emph{\G}_\alpha$ and $\emph{\G}_\beta$ corresponding to $\emph{\g}_\alpha$ and $\emph{\g}_\beta$ we have
$$\emph{\G}_\alpha(y_\alpha) = (\emph{\g}_\beta^{-1}\circ \emph{\g}_\alpha)'(y_\alpha)^T \emph{\G}_\beta(y_\beta) 
(\emph{\g}_\beta^{-1}\circ \emph{\g}_\alpha)'(y_\alpha).$$
\end{itemize}
\end{lemma}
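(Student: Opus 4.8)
The plan is to exploit the \emph{graph} structure of the coordinate maps, which makes each local inverse $\g^{-1}$ the restriction of a globally defined affine map and thereby sidesteps the usual difficulties of differentiating on a hypersurface. Indeed, writing $\g(y) = Q\left(\begin{matrix} y \\ h(y)\end{matrix}\right) + x^*$ and using that $Q$ is orthogonal, so $Q^T = Q^{-1}$, the first $d-1$ components of $Q^T(\g(y) - x^*)$ are exactly $y$. Hence, if $\pi \in \mathcal L(\R^d,\R^{d-1})$ denotes the projection onto the first $d-1$ coordinates, then $\g^{-1}(z) = \pi Q^T(z - x^*)$ for every $z \in \SSS \cap V$. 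In other words $\g^{-1}$ is the restriction to $\SSS \cap V$ of the affine map $L(z) = \pi Q^T(z - x^*)$, which is smooth on all of $\R^d$ with constant derivative $\pi Q^T$. This is the crux of the argument: a priori $\g_\beta^{-1}$ is only defined on the set $\SSS$, which has empty interior in $\R^d$, so the chain rule does not obviously apply to the transition map and no inverse function theorem is available, since the coordinates are merely differentiable at a single point rather than $C^1$; replacing $\g_\beta^{-1}$ by the affine map $L_\beta$ removes this difficulty entirely.

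For part (a), I would first observe that, since $x = \g_\alpha(y_\alpha) \in V_\beta$ and $V_\beta$ is open while $\g_\alpha$ is continuous, $\g_\alpha$ maps a neighbourhood of $y_\alpha$ in $U_\alpha$ into $V_\beta$; on this neighbourhood $\g_\beta^{-1}\circ\g_\alpha = L_\beta \circ \g_\alpha$. As $\g_\alpha$ is differentiable at $y_\alpha$ by regularity and $L_\beta$ is affine, the Fr\'echet chain rule yields differentiability at $y_\alpha$ with $(\g_\beta^{-1}\circ\g_\alpha)'(y_\alpha) = \pi Q_\beta^T\,\g_\alpha'(y_\alpha)$. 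Interchanging the roles of $\alpha$ and $\beta$ shows likewise that $\g_\alpha^{-1}\circ\g_\beta$ is differentiable at $y_\beta$. Differentiating the identities $(\g_\alpha^{-1}\circ\g_\beta)\circ(\g_\beta^{-1}\circ\g_\alpha) = \text{id}$ near $y_\alpha$ and its symmetric counterpart near $y_\beta$ then gives that the two derivatives are mutually inverse, which is precisely the invertibility statement.

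For part (b), I would apply the chain rule to the identity $\g_\alpha = \g_\beta \circ (\g_\beta^{-1}\circ\g_\alpha)$, which holds on the same neighbourhood of $y_\alpha$ because $\g_\beta \circ \g_\beta^{-1} = \text{id}$ on $\SSS \cap V_\beta$. Since the inner map is differentiable at $y_\alpha$ by (a) and $\g_\beta$ is differentiable at $y_\beta$, this produces the single key relation $\g_\alpha'(y_\alpha) = \g_\beta'(y_\beta)\,(\g_\beta^{-1}\circ\g_\alpha)'(y_\alpha)$. The asserted equivalence, with $\xi_\beta = (\g_\beta^{-1}\circ\g_\alpha)'(y_\alpha)\xi_\alpha$, is then immediate, and the images of $\g_\alpha'(y_\alpha)$ and $\g_\beta'(y_\beta)$ coincide because the transition derivative is invertible by (a). Finally, for part (c) I would substitute this key relation into the definition $\G_\alpha(y_\alpha) = \g_\alpha'(y_\alpha)^T\g_\alpha'(y_\alpha)$; the middle factor becomes $\g_\beta'(y_\beta)^T\g_\beta'(y_\beta) = \G_\beta(y_\beta)$, and the two flanking copies of $(\g_\beta^{-1}\circ\g_\alpha)'(y_\alpha)$ give the claimed transformation law at once. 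Thus, once the affine-restriction formula for $\g^{-1}$ is established, every remaining step reduces to the pointwise chain rule together with elementary linear algebra.
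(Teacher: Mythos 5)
Your proof is correct and takes essentially the same route as the paper's: both rest on the key observation that $\g_\beta^{-1}$ is the restriction of the globally affine map $z\mapsto \pi Q_\beta^T(z-x_\beta^*)$, so the transition map $\g_\beta^{-1}\circ\g_\alpha$ is differentiable at $y_\alpha$ by the pointwise chain rule, after which the invertibility, the relation $\g_\alpha'(y_\alpha)=\g_\beta'(y_\beta)(\g_\beta^{-1}\circ\g_\alpha)'(y_\alpha)$, and the transformation law for the metric tensor follow by elementary linear algebra. Your write-up simply makes explicit some details the paper leaves implicit (that $\g_\alpha$ maps a neighbourhood of $y_\alpha$ into $V_\beta$, and the differentiation of $\Phi^{-1}\circ\Phi=\mathrm{id}$), which is a welcome refinement rather than a different method.
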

\begin{proof} We write $\Phi = \g_\beta^{-1}\circ \g_\alpha$ for the transition map. Observe that $\Phi$ is a homeomorphism on a neighbourhood of $y_\alpha$ with inverse $\Phi^{-1} = \g_\alpha^{-1}\circ \g_\beta$.

(a) The form of $\g_\beta$ shows that $\Phi(y)$ is given by the first $d-1$ entries of $Q_\beta^T(\g_\alpha(y)-x_\beta^*)$. Hence $\Phi$ is differentiable at $y_\alpha$. In the same way we obtain the differentiability of $\Phi^{-1}$ at $y_\beta$. Therefore $\Phi'(y_\alpha)$ is invertible with inverse as asserted.

(b) This follows from $\g_\alpha'(y_\alpha) = \g_\beta'(y_\beta) \Phi'(y_\alpha)$ and the invertibility of $\Phi'(y_\alpha)$.

(c) We can repeat the short argument from \cite[Section 1.4]{Jost}. For arbitrary $\xi_\alpha,\eta_\alpha\in \R^{d-1}$ we use (b) to obtain
\begin{align*}
(\G_\alpha(y_\alpha) \xi_\alpha,\eta_\alpha)&\,= \big(\g'_\alpha(y_\alpha) \xi_\alpha, \g'_\alpha(y_\alpha) \eta_\alpha\big)\\
&\, = \big(\g'_\beta(y_\beta) \Phi'(y_\alpha) \xi_\alpha, \g'_\beta(y_\beta) \Phi'(y_\alpha) \eta_\alpha\big)\\
&\, = (\Phi'(y_\alpha)^T\G_\beta(y_\beta)\Phi'(y_\alpha) \xi_\alpha,\eta_\alpha).
\end{align*}
This implies the asserted formula.
\end{proof}

\subsection{Tangent space and surface gradient} Now we can introduce the following notions. 

\begin{definition} \label{def:tang} Let $\SSS$ be a Lipschitz hypersurface in graph representation.
\begin{itemize}
 \item[\text{(a)}] Let $x\in \SSS$ be regular with Lipschitz graph coordinates $(\g,U)$. The \emph{tangent space} at $x$ is
 $$T_x\SSS  = \big \{v\in \R^d\,:\,\text{there is } \xi\in \R^{d-1} \text{ with } v = \g'(\g^{-1}(x))\xi\big \}.$$
 \item[\text{(b)}] A function $u\in L_{\text{loc}}^1(\SSS)$ is called \emph{weakly differentiable}, 
 if for all Lipschitz graph coordinates $(\g,U)$ for $\SSS$ the function $u\circ \g$ is weakly differentiable on $U \subset \R^{d-1}$. 
 \item[\text{(c)}] Let $u\in L_{\text{loc}}^1(\SSS)$ be weakly differentiable. Then for a regular point $x \in \SSS$ the \emph{surface gradient} $\nabla_{\!\SSS}u(x)\in T_x\SSS$ is given by
\begin{equation*}
 \nabla_{\!\SSS} u(x) = \g'(y) \G^{-1}(y) \nabla (u\circ \g)(y) = \sum_{i,j=1}^{d-1}\g^{ij}(y)\partial_j(u\circ \g)(y)\partial_i \g(y),
\end{equation*}
where $(\g,U)$ are arbitrary regular Lipschitz graph coordinates for $x$ and $y= \g^{-1}(x)$. 
\end{itemize}
\end{definition}

These notions coincide with the usual ones if $\SSS$ is smooth, see, e.g., \cite[Remark VII.10.11]{AmEs} for the representation of the surface gradient in coordinates. As in the smooth case one shows that these notions are well-defined.

\begin{lemma} At a regular point $x\in \SSS$, the tangent space as well as the surface gradient of a weakly differentiable function are independent of the chosen regular graph coordinates.
\end{lemma}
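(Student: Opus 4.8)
The plan is to verify the two claimed invariances separately, relying on the transition-map properties established in Lemma~\ref{lem:tech-S}. Let $x \in \SSS$ be a regular point, and let $(\g_\alpha, U_\alpha)$ and $(\g_\beta, U_\beta)$ be two sets of regular Lipschitz graph coordinates for $x$, with $y_\alpha = \g_\alpha^{-1}(x)$ and $y_\beta = \g_\beta^{-1}(x)$. For the tangent space, I would observe that by definition $T_x\SSS$ computed in the $\alpha$-chart is the image of $\g_\alpha'(y_\alpha)$, while in the $\beta$-chart it is the image of $\g_\beta'(y_\beta)$. These two images coincide by Lemma~\ref{lem:tech-S}(b), so the tangent space is well-defined. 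This part is essentially immediate once the transition map has been shown to be differentiable and invertible at $x$.

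For the surface gradient, the plan is to start from the coordinate expression in the $\alpha$-chart, namely $\g_\alpha'(y_\alpha) \G_\alpha^{-1}(y_\alpha) \nabla(u\circ \g_\alpha)(y_\alpha)$, and transform it step by step into the corresponding $\beta$-chart expression. Writing $\Phi = \g_\beta^{-1}\circ \g_\alpha$ for the transition map, the chain rule for weakly differentiable functions composed with a differentiable map gives $\nabla(u\circ \g_\alpha)(y_\alpha) = \Phi'(y_\alpha)^T \nabla(u\circ \g_\beta)(y_\beta)$, since $u\circ \g_\alpha = (u\circ \g_\beta)\circ \Phi$. Next I would substitute the metric-tensor transformation law from Lemma~\ref{lem:tech-S}(c), which reads $\G_\alpha(y_\alpha) = \Phi'(y_\alpha)^T \G_\beta(y_\beta) \Phi'(y_\alpha)$, to rewrite $\G_\alpha^{-1}(y_\alpha) = \Phi'(y_\alpha)^{-1} \G_\beta^{-1}(y_\beta) (\Phi'(y_\alpha)^T)^{-1}$. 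Finally I would use the relation $\g_\alpha'(y_\alpha) = \g_\beta'(y_\beta) \Phi'(y_\alpha)$ from the proof of Lemma~\ref{lem:tech-S}(b). Combining these three substitutions, the factors $\Phi'(y_\alpha)$, $\Phi'(y_\alpha)^{-1}$, $(\Phi'(y_\alpha)^T)^{-1}$ and $\Phi'(y_\alpha)^T$ should cancel telescopically, leaving exactly $\g_\beta'(y_\beta) \G_\beta^{-1}(y_\beta) \nabla(u\circ \g_\beta)(y_\beta)$, which is the surface gradient computed in the $\beta$-chart.

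The one point requiring genuine care — and what I expect to be the main obstacle — is justifying the chain rule $\nabla(u\circ \g_\alpha)(y_\alpha) = \Phi'(y_\alpha)^T \nabla(u\circ \g_\beta)(y_\beta)$ in the \emph{weak} sense. Here $u$ is only weakly differentiable and $\Phi$ is only Lipschitz (differentiable at the single point $y_\alpha$ by Lemma~\ref{lem:tech-S}(a)), so the classical chain rule does not apply directly. The clean way around this is to note that by Definition~\ref{def:tang}(b) the weak gradients $\nabla(u\circ\g_\alpha)$ and $\nabla(u\circ\g_\beta)$ exist as $L^1_{\text{loc}}$ functions, and that for bi-Lipschitz transition maps the weak chain rule holds almost everywhere; one then evaluates the identity at the regular point $y_\alpha$, where $\Phi$ is genuinely differentiable. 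Since both the set of non-regular points and the set where the weak derivatives misbehave are $\mathcal H_{d-1}$-null, the pointwise identity holds $\mathcal H_{d-1}$-almost everywhere, which is all that is needed for the surface gradient to be well-defined as an element of $L^1_{\text{loc}}(\SSS)$.

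Once the chain rule is in hand, the remaining computation is purely algebraic matrix cancellation and presents no difficulty; the substance of the lemma lies entirely in Lemma~\ref{lem:tech-S} and in the measure-theoretic justification of the weak chain rule.
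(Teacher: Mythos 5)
Your proof takes essentially the same route as the paper's: the tangent-space assertion is read off directly from Lemma~\ref{lem:tech-S}(b), and the surface-gradient assertion reduces, via the chain-rule identity $\nabla(u\circ \g_\alpha)(y_\alpha) = \Phi'(y_\alpha)^T \nabla(u\circ \g_\beta)(y_\beta)$ together with the metric-tensor transformation law of Lemma~\ref{lem:tech-S}(c), to a purely algebraic cancellation. Your extra paragraph justifying the chain rule in the weak setting (bi-Lipschitz change of variables, validity $\mathcal H_{d-1}$-almost everywhere) makes explicit a point the paper passes over silently, and is a sound refinement of, not a departure from, the paper's argument.
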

\begin{proof} The assertion for the tangent space follows from Lemma \ref{lem:tech-S}(b). For the surface gradient we let $\g_\alpha$ and $\g_\beta$ be regular for $x$, set $y_\alpha = \g_\alpha^{-1}(x)$, $y_\beta = \g_\beta^{-1}(x)$ and
$$v_\alpha = \g_\alpha'(y_\alpha)\G_\alpha^{-1}(y_\alpha) \nabla(u\circ \g_\alpha)(y_\alpha), \qquad v_\beta = \g_\beta'(y_\beta)\G_\beta^{-1}(y_\beta) \nabla(u\circ \g_\beta)(y_\beta).$$
As above we write $\Phi = \g^{-1}_\beta \circ \g_\alpha$ for the transition map. By Lemma \ref{lem:tech-S}(b) we have $v_\alpha = v_\beta$ if and only if
$$\G_\beta^{-1}(y_\beta) \nabla(u\circ \g_\beta)(y_\beta) = \Phi'(y_\alpha)\G_\alpha^{-1}(y_\alpha) \nabla(u\circ \g_\alpha)(y_\alpha).$$
But this is a consequence of the identities
$$\nabla(u\circ \g_\alpha)(y_\alpha) = \Phi'(y_\alpha)^T \nabla (u\circ \g_\beta)(y_\beta), \qquad \G_\beta^{-1}(y_\beta) = \Phi'(y_\alpha)\G_\alpha^{-1}(y_\alpha) \Phi'(y_\alpha)^T,$$
where the latter follows from Lemma \ref{lem:tech-S}(c).
\end{proof}

\section{Non-degenerate bulk diffusion}\label{sec:nondeg}
In this section we consider \eqref{e-parabol}--\eqref{e-initial} with a uniformly elliptic 
diffusion coefficient $\mu_\Omega$ in the bulk. The case when $\mu_\Omega$ degenerates towards
 a compact Lipschitz surface is investigated in the next section.
\subsection{Assumptions on the geometry and the coefficients} 
In case $1\leq k\leq d-1$, we say that the set $\SSS$ is a $(d-k)$-dimensional \emph{Lipschitz 
submanifold} if for all $x\in \SSS$ there is an open neighbourhood $V$ of $x$ in $\R^d$ and a 
bi-Lipschitz mapping $\varphi$ from $V$ to $\R^d$ such that $\varphi(\SSS\cap V) =
]0,1[^{d-k} \times \{0_{\R^k}\}$. By a compact $0$-dimensional Lipschitz submanifold $\SSS$ we mean a finite union of points. 
Throughout the paper, we impose the following.

\begin{assu}\label{assu:geo}
\begin{itemize}
 \item[\text{(a)}] $\Omega\subset \R^d$ is a bounded domain, $d\geq 2$.
 \item[\text{(b)}] $\Gamma_{\dyn} \subset \partial\Omega$ and $\Sigma\subset \Omega$ are  Lipschitz 
hypersurfaces in graph representation. They are endowed with the $(d-1)$-dimensional Hausdorff 
measure $\mathcal H_{d-1}$.
 \item[\text{(c)}] $\Gamma_N$ is a $(d-1)$-dimensional Lipschitz submanifold of $\mathbb{R}^d$.
 \item[\text{(d)}] Additionally, the closures $\overline{\Gamma_N}$, $\overline{\Gamma_{\dyn}}$ and 
$\overline{\Sigma}$ are contained in $(d-1)$-dimensional Lipschitz submanifolds, respectively.
 \end{itemize}
\end{assu}

	We emphasize that for the Dirichlet part $\Gamma_D$, there are only assumptions in a 
neighbourhood of points where $\Gamma_D$ meets $\Gamma_N$ or $\Gamma_\dyn$. In particular, in
 the pure Dirichlet case $\Gamma_D =  \partial\Omega$ there are no assumptions on the boundary.
 It is not excluded that one or more of the sets $\Gamma_D$, $\Gamma_N$, $\Gamma_{\dyn}$ 
or $\Sigma$ are empty.

\begin{assu}\label{assu:coeff}
	\begin{itemize}
\item[\text{(a)}] The coefficient $\mu_\Omega: \Omega \to \mathcal L(\R^d)$ is measurable, 
bounded and there is a constant $\mu_\Omega^* > 0$ such that
 $$\big(\mu_\Omega(x) \xi, \xi\big) \geq \mu_\Omega^* |\xi|^2, \qquad  x\in \Omega, 
\qquad \xi \in \R^d.$$
  \item[\text{(b)}] Let $\SSS$ be either $\Gamma_{\dyn}$ or $\Sigma$. 
  Then $\mu_\SSS: \SSS\to \mathcal L(\R^d)$ is measurable, and there are a measurable,
 bounded, nonnegative
  function $\mu_\SSS^*:\SSS\to \R$ and constants $c_1,c_2 > 0$ such that
$$\big(\mu_\SSS(x)\xi,\xi\big) \geq c_1\mu_\SSS^*(x)|\xi|^2, \qquad \|\mu_\SSS(x)\|_{\mathcal L(\R^d)}
 \leq c_2\mu_\SSS^*(x), 
  \qquad x\in \SSS, \quad \xi \in T_x\SSS.$$
\item[\text{(c)}] The relaxation coefficient $\zeta: \Omega \cup \Gamma_{\dyn}\to \R$ is measurable,
 bounded and there is a constant $c > 0$ such that $\zeta(x)\geq c$ for all 
$x\in  \Omega \cup \Gamma_{\dyn} \cup \Sigma$.
\end{itemize}
\end{assu}

The functions $\mu_{\Gamma_{\dyn}}^*$ and $\mu_\Sigma^*$ describe where diffusion takes place on 
$\Gamma_{\dyn}$ and $\Sigma$, and where diffusion degenerates. There are no restrictions on the 
support of these functions.
An example we have in mind is $\mu_\SSS^*(x) = \dist(x,M)^\gamma$ for a subset $M\subset \SSS$ 
and $\gamma > 0$, which indicates that diffusion degenerates towards $M$ and is impossible 
along and across $M$.

\begin{rem}
The above assumptions cover a large class of nonsmooth scenarios.
However, our realization of \eqref{e-parabol}--\eqref{e-initial} developed below also works under more general conditions. For instance, the interface $\Sigma$ must only be a Lipschitz hypersurface in graph representation in a neighbourhood of the support 
of $\mu_\Sigma^*$. Away from the support, as in \cite{EMR} it suffices that $\Sigma$ is a $(d-1)$-set (see \cite[Section VII.1.1]{JW}). To avoid too many technical difficulties we do not take these issues into account.
\end{rem}

\subsection{The realization on $\mathbb L^2$} We construct the operator $A_2$ which yields a realization of the 
elliptic part of \eqref{e-parabol}--\eqref{e-initial} on a suitable $L^2$-space, cf. Section \ref{sec:Heu}.
 The approach based on sesquilinear forms is similar to the one used in \cite{EMR}. 
     
For $p \in (1,\infty)$ we denote by $W^{1,p}(\Omega)$ the usual complex Sobolev space over $\Omega$. We further 
define $W_\Gam^{1,p}(\Omega)$ as the closure in $W^{1,p}(\Omega)$ of 
$$C_\Gam^{\infty}(\Omega)= \big\{ u|_{\Omega} : u \in C_c^{\infty}({\R}^d), \; (\supp u )\cap \Gamma_D
  = \emptyset \big\}.$$
Roughly speaking, elements of $W_\Gam^{1,p}(\Omega)$ vanish on the Dirichlet part $\Gamma_D$ of $\partial \Omega$.

Let $\tr_{\Gamma_\dyn}$ and $\tr_\Sigma$ be the trace operators for $\Gamma_{\dyn}$ and $\Sigma$. Then \cite[Proposition 2.8]{EMR} implies the continuity of 
\begin{equation}\label{tr-cont}
 \tr_{\Gamma_\dyn}: W_\Gam^{1,2}(\Omega)\to L^2({\Gamma_\dyn}), \qquad \tr_\Sigma: W_\Gam^{1,2}(\Omega)\to L^2(\Sigma).
\end{equation}
As in the Introduction and Heuristics Sections, we use the notation $u_{\Gamma_\dyn} = \tr_{\Gamma_\dyn} u$ and $u_\Sigma = \tr_\Sigma u$ for the traces, and sometimes write only $u$ for $u_{\Gamma_\dyn}$ or $u_\Sigma$. 

\begin{definition} \label{def:domt}
\begin{itemize}
\item[\text{(a)}] On $C_\Gam^\infty(\Omega)$ we introduce the scalar product $(\cdot,\cdot)_{\dom(\mathfrak t)}$ by
$$(u,v)_{\dom(\mathfrak t)} = (u,v)_{W^{1,2}(\Omega)} + \int_{\Gamma_\dyn} \big(\nabla_{\Gamma_\dyn}  u, \overline{\nabla_{\Gamma_\dyn} v}\big) \,\mu_{\Gamma}^*\, d\mathcal H_{d-1} + \int_{\Sigma} \big(\nabla_\Sigma u, \overline{\nabla_\Sigma v}\big) \,\mu_{\Sigma}^* \,d\mathcal H_{d-1},$$ 
where $(\cdot,\cdot)_{W^{1,2}(\Omega)}$ is the usual scalar product on $W^{1,2}(\Omega)$. The corresponding Hilbert norm is denoted by $\|\cdot\|_{\dom(\mathfrak t)}$. 
\item[\text{(b)}] The Hilbert space $\dom(\mathfrak t)$ is defined by
$$\dom(\mathfrak t) = \text{completion of $C_\Gam^\infty(\Omega)$ with respect to $\|\cdot\|_{\dom(\mathfrak t)}$}.$$
\item[\text{(c)}] For $p\in [1,\infty]$ we set
$\L^p = L^p\big( (\Omega\setminus\Sigma) \cup\Gamma_{\dyn}\cup\Sigma, (dx+ \mathcal H_{d-1})\big).$
\item[\text{(d)}] The map $\mathfrak J: \dom(\mathfrak t)\to \mathbb L^2$ is given by
$\mathfrak J(u) = (u, u_{\Gamma_{\dyn}}, u_\Sigma).$
\end{itemize}
\end{definition}

For $\SSS\in \{\Gamma_\dyn, \Sigma\}$ we will also write $$\|f\|_{L^2(\SSS,\mu_{\SSS}^*)}^2 = \int_\SSS |f|^2 \, \mu_{\SSS}^*\,d\mathcal H_{d-1},$$
such that the Hilbert norm may be expressed as
\begin{equation}\label{eq:hilbert-norm}
 \|u\|_{\dom(\ft)}^2 = \|u\|_{W^{1,2}(\Omega)}^2 + \|\nabla_{\Gamma_{\dyn}} u\|_{L^2(\Gamma_{\dyn}, \mu_{\Gamma_{\dyn}}^*)}^2 + \|\nabla_{\Sigma} u\|_{L^2(\Sigma, \mu_{\Sigma}^*)}^2.
\end{equation}
In view of $\dom(\mathfrak t)\subseteq W_D^{1,2}(\Omega)$ and the continuity of the traces \eqref{tr-cont}, the map $\J$ is indeed well-defined. 
The space $\L^p$ can be identified as
$$\L^p = L^p(\Omega\setminus \Sigma)\oplus L^p(\Gamma_{\dyn})\oplus L^p(\Sigma).$$

\begin{rem}
The space $\dom(\ft)$ includes an implicit definition of a weak surface gradient, 
    even if $\mu_{\Gamma_\dyn}^*$, $\mu_\Sigma^*$ are only non-negative, as
the operator 
\[
\nabla_\Sigma: \{\psi|_\Sigma: \psi \in C_D^\infty(\Omega)\}  \to L^2(\mu^*_\Sigma, \Sigma)
\]
 continuously extends to $\dom(\ft)$ by density
(analogously for $\Gamma_\dyn$). This implies our concept of
degenerate diffusion on $\Sigma$ and $\Gamma_\dyn$. The regularity of elements $u$ of $\dom(\ft)$ on $\Gamma_{\dyn}$ and $\Sigma$ is determined by the supports of 
$\mu_{\Gamma_\dyn}^*$ and $\mu_\Sigma^*$. On subsets where these are strictly positive, 
$u_{\Gamma_\dyn}$ and $u_\Sigma$ have square integrable weak surface gradients in the sense of Section \ref{sec:SG}.
\end{rem}

 The operator $A_2$ will be derived from the sesquilinear form
$$\mathfrak t(u,v) = \int_\Omega (\mu_\Omega \nabla u, \overline{\nabla v})\,dx + \int_{\Gamma_\dyn} \big (\mu_{\Gamma_\dyn}\nabla_{\Gamma_\dyn} u, \overline{\nabla_{\Gamma_\dyn} v}\big) \,d\mathcal H_{d-1} + \int_{\Sigma} \big(\mu_\Sigma\nabla_\Sigma u, \overline{\nabla_\Sigma v}\big) \,d\mathcal H_{d-1},$$
which is originally defined for $u,v \in C_D^\infty(\Omega)$.

\begin{lemma}\label{lem:t-J} The form $\mathfrak t$ extends continuously to a sesquilinear form on $\dom(\mathfrak t)$. It  is $\mathfrak J$-elliptic, i.e., there is $c >0$ such that
$$\Re \mathfrak t(u,u) + \|\mathfrak J u\|_{\mathbb L^2}^2 \geq c \|u\|_{\dom(\mathfrak t)}^2, \qquad u\in \dom(\mathfrak t).$$
Moreover, the map $\mathfrak J: \dom(\mathfrak t)\to \mathbb L^2$ has dense range and is continuous and compact.
\end{lemma}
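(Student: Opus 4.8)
The plan is to verify the four assertions—continuous extension, $\mathfrak J$-ellipticity, dense range, and continuity and compactness of $\mathfrak J$—essentially independently, relying on the pointwise coefficient bounds in Assumption \ref{assu:coeff} and on the trace continuity \eqref{tr-cont}. For the continuous extension and the ellipticity I would argue on $C_D^\infty(\Omega)$ and pass to the completion $\dom(\mathfrak t)$ by density. The bound $\|\mu_\SSS(x)\|_{\mathcal L(\R^d)}\le c_2\mu_\SSS^*(x)$ together with two applications of Cauchy--Schwarz (pointwise and then in $L^2(\SSS,\mu_\SSS^*)$) controls each surface term of $\mathfrak t(u,v)$ by $c_2\|\nabla_\SSS u\|_{L^2(\SSS,\mu_\SSS^*)}\|\nabla_\SSS v\|_{L^2(\SSS,\mu_\SSS^*)}$, while boundedness of $\mu_\Omega$ controls the bulk term by $\|\mu_\Omega\|_\infty\|\nabla u\|_{L^2(\Omega)}\|\nabla v\|_{L^2(\Omega)}$; summing gives $|\mathfrak t(u,v)|\le C\|u\|_{\dom(\mathfrak t)}\|v\|_{\dom(\mathfrak t)}$, so $\mathfrak t$ extends.

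For ellipticity I would split $\Re\mathfrak t(u,u)$ into its three coercive pieces. Uniform ellipticity of $\mu_\Omega$ (Assumption \ref{assu:coeff}(a)) gives $\Re\int_\Omega(\mu_\Omega\nabla u,\overline{\nabla u})\,dx\ge \mu_\Omega^*\|\nabla u\|_{L^2(\Omega)}^2$ (writing $\nabla u$ in real and imaginary parts), and the lower bounds $(\mu_\SSS\xi,\xi)\ge c_1\mu_\SSS^*|\xi|^2$ give $\Re\int_\SSS(\mu_\SSS\nabla_\SSS u,\overline{\nabla_\SSS u})\,d\mathcal H_{d-1}\ge c_1\|\nabla_\SSS u\|_{L^2(\SSS,\mu_\SSS^*)}^2$. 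Since $\|\mathfrak J u\|_{\mathbb L^2}^2\ge\|u\|_{L^2(\Omega)}^2$ supplies the missing $L^2(\Omega)$-mass, adding these and taking $c=\min\{\mu_\Omega^*,1,c_1\}$ yields exactly $\Re\mathfrak t(u,u)+\|\mathfrak J u\|_{\mathbb L^2}^2\ge c\|u\|_{\dom(\mathfrak t)}^2$. It is the uniform (non-degenerate) lower bound on $\mu_\Omega$ that makes this immediate here. Continuity of $\mathfrak J$ is then read off from $\|u\|_{L^2(\Omega)}\le\|u\|_{\dom(\mathfrak t)}$ and \eqref{tr-cont}. For compactness I would factor $\mathfrak J=(\iota,\tr_{\Gamma_\dyn},\tr_\Sigma)$ through $W^{1,2}_D(\Omega)$ and show each component is compact: the inclusion $W^{1,2}_D(\Omega)\hookrightarrow L^2(\Omega)$ is compact by Rellich (elements of $C_D^\infty(\Omega)$ extend by zero across $\Gamma_D$ and, using the Lipschitz structure near $\Gamma_N\cup\Gamma_\dyn$ from Assumption \ref{assu:geo}, extend boundedly into a fixed ball), and each trace operator is compact since it in fact maps into a space of positive fractional smoothness on the Lipschitz surface, which embeds compactly into $L^2$ of that surface. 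A product of compact operators into the finite direct sum $\mathbb L^2$ is compact.

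The substantive point is the dense range of $\mathfrak J$, which I would establish by computing the orthogonal complement of $\overline{\mathfrak J(C_D^\infty(\Omega))}$ in $\mathbb L^2=L^2(\Omega\setminus\Sigma)\oplus L^2(\Gamma_\dyn)\oplus L^2(\Sigma)$, using that $C_D^\infty(\Omega)$ is dense in $\dom(\mathfrak t)$ and $\mathfrak J$ is continuous. Suppose $(f,g,h)$ annihilates all $(\phi,\phi_{\Gamma_\dyn},\phi_\Sigma)$. Testing first with $\phi\in C_c^\infty(\Omega\setminus\overline\Sigma)$, whose surface traces vanish, forces $f=0$, since such $\phi$ are dense in $L^2(\Omega)$ (as $\overline\Sigma$ is Lebesgue-null). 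Next, for a ball $B$ meeting $\Gamma_\dyn$ but avoiding $\overline\Sigma\cup\overline{\Gamma_D}$, test functions supported in $B$ kill the $\Sigma$-term, and their $\Gamma_\dyn$-traces are dense in $L^2(\Gamma_\dyn\cap B)$, giving $g=0$ there; covering $\Gamma_\dyn$ up to the meeting set $\Gamma_\dyn\cap(\overline\Sigma\cup\overline{\Gamma_D})$ gives $g=0$. The remaining relation $\int_\Sigma\phi_\Sigma\,\overline h\,d\mathcal H_{d-1}=0$ for all $\phi$, together with the density of restrictions of smooth functions in $L^2(\Sigma)$, yields $h=0$.

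The main obstacle is precisely this last step: one must argue that the coupled traces on $\Gamma_\dyn$ and $\Sigma$ decouple in the $L^2$-limit. This hinges on the geometric fact that the sets where $\Gamma_\dyn$, $\Sigma$ and $\Gamma_D$ meet are $\mathcal H_{d-1}$-negligible, which is where I would use Assumption \ref{assu:geo} (the closures $\overline{\Gamma_N}$, $\overline{\Gamma_\dyn}$, $\overline\Sigma$ are contained in $(d-1)$-dimensional Lipschitz submanifolds); a secondary technical point to be checked carefully is the bounded extension underlying the Rellich step, so that compactness holds despite no regularity being imposed on $\Gamma_D$ away from its meeting with $\Gamma_N\cup\Gamma_\dyn$.
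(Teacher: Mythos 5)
Your handling of the continuous extension and the $\J$-ellipticity is exactly the paper's argument (same two uses of Cauchy--Schwarz with $\|\mu_\SSS(x)\|\leq c_2\mu_\SSS^*(x)$, same lower bounds, same density step); for the mapping properties of $\J$ the paper simply cites \cite[Lemma 2.10]{EMR}, whereas you prove them directly, and your Rellich-plus-trace compactness sketch is fine in outline. The genuine gap is in your dense-range argument, at precisely the point you flag as the main obstacle: you need $\mathcal H_{d-1}\big(\Gamma_\dyn\cap(\overline{\Sigma}\cup\overline{\Gamma_D})\big)=0$ and claim this follows from Assumption \ref{assu:geo}(d). The $\Gamma_D$-part is harmless for a different reason ($\Gamma_D$ is \emph{closed} and disjoint from $\Gamma_\dyn$, so $\Gamma_\dyn\cap\overline{\Gamma_D}=\emptyset$ and every point of $\Gamma_\dyn$ has positive distance to $\Gamma_D$), but the negligibility of $\Gamma_\dyn\cap\overline{\Sigma}$ is \emph{not} implied by the assumptions and is false in general. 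Take $d=2$, $\Omega=(0,1)^2$, $\Gamma_\dyn=(0,1)\times\{0\}$, let $C\subset(0,1)$ be a fat Cantor set (compact, nowhere dense, $\mathcal H_1(C)>0$), set $h(t)=\tfrac12\dist(t,C)$ and $\Sigma=\{(t,h(t)):t\in(0,1),\,h(t)>0\}$. Then $\Sigma\subset\Omega$ is a Lipschitz hypersurface in graph representation and $\overline{\Sigma}$ lies in the graph of a Lipschitz extension of $h$, so Assumption \ref{assu:geo} holds; yet $\overline{\Sigma}\cap\Gamma_\dyn\supseteq C\times\{0\}$ has positive $\mathcal H_1$-measure. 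On this set no ball meeting $\Gamma_\dyn$ avoids $\overline{\Sigma}$, so your covering argument never reaches it and you cannot conclude $g=0$ there.

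The fix does not need negligibility of the meeting set; what decouples the traces is that $\Sigma\cap\partial\Omega=\emptyset$ (since $\Sigma\subset\Omega$) together with $\mathcal H_{d-1}(\Sigma)<\infty$ ($\overline{\Sigma}$ is compact inside a Lipschitz submanifold), so that by dominated convergence $\mathcal H_{d-1}\big(\Sigma\cap\{x:\dist(x,\partial\Omega)<\epsilon\}\big)\to 0$ as $\epsilon\to 0$. Concretely, once $f=0$ is established (your first step is correct; note it uses that $\overline{\Sigma}$ is Lebesgue-null, which \emph{does} follow from Assumption \ref{assu:geo}(d)), test the remaining relation with $\phi=\chi\,\eta_\epsilon$, where $\chi\in C_D^\infty(\Omega)$ is arbitrary and $\eta_\epsilon$ is a smoothed cutoff equal to $1$ on $\{\dist(\cdot,\partial\Omega)<\epsilon/2\}$ and supported in $\{\dist(\cdot,\partial\Omega)<\epsilon\}$. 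The $\Gamma_\dyn$-term equals $\int_{\Gamma_\dyn}\chi\,\overline{g}\,d\mathcal H_{d-1}$ for every $\epsilon$, while the $\Sigma$-term is bounded by $\|\chi\|_\infty\|h\|_{L^2(\Sigma)}\,\mathcal H_{d-1}\big(\Sigma\cap\{\dist(\cdot,\partial\Omega)<\epsilon\}\big)^{1/2}\to 0$; since restrictions of such $\chi$ are dense in $L^2(\Gamma_\dyn)$ (each point of $\Gamma_\dyn$ is at positive distance from $\Gamma_D$, and $\Gamma_\dyn$ is locally a Lipschitz graph), this gives $g=0$, and then $h=0$ as you say. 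This thin-neighborhood argument is essentially the mechanism behind the result of \cite{EMR} that the paper's proof invokes; with it, the rest of your proposal goes through.
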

\begin{proof} The continuity and the compactness of $\mathfrak J$ follow from $\dom(\mathfrak t)\subseteq W_D^{1,2}(\Omega)$ and \cite[Lemma 2.10]{EMR}. The proof in \cite{EMR} also shows that $\mathfrak J C_D^\infty(\Omega)$ is dense in $\L^2$, hence $\mathfrak J \dom(\mathfrak t)$ is dense since $C_D^\infty(\Omega)\subset \dom(\mathfrak t)$.

It is clear that $\ft:C_D^\infty(\Omega)\times C_D^\infty(\Omega) \to \C$ is sesquilinear. Given $u,v\in C_D^\infty(\Omega)$ we use the assumption $\|\mu_\SSS(x)\|_{\mathcal L(\R^d)} \leq c_2\mu_\SSS^*(x)$ for $\SSS\in \{\Gamma_{\dyn}, \Sigma\}$, H\"older's inequality and \eqref{eq:hilbert-norm} to estimate
\begin{align*}
 |\mathfrak t(u,v)| &\, \leq \|\mu_\Omega\|_\infty \|\nabla u\|_{L^2(\Omega)} \|\nabla v\|_{L^2(\Omega)}\\
&\, \qquad  +c_2\|\nabla_{\Gamma_\dyn} u\|_{L^2(\Gamma_\dyn,\mu_{\Gamma_\dyn}^*)} \|\nabla_{\Gamma_\dyn} v\|_{L^2(\Gamma_\dyn, \mu_{\Gamma_\dyn}^*)}+ c_2\|\nabla_{\Sigma} u\|_{L^2(\Sigma,\mu_\Sigma^*)} \|\nabla_\Sigma v\|_{L^2(\Sigma,\mu_\Sigma^* )} \\
&\, \leq C \|u\|_{\dom(\mathfrak t)} \|v\|_{\dom(\mathfrak t)} .
\end{align*}
Hence $\mathfrak t$ extends continuously to a sesquilinear form on $\dom(\ft)$.  To show its $\mathfrak J$-ellipticity, for $u\in C_D^\infty(\Omega)$ we use the assumption $(\mu_\SSS \xi,\xi) \geq c_1\mu_\SSS^*|\xi|^2$ for $\SSS\in \{\Gamma_{\dyn}, \Sigma\}$ to get
\begin{align*}
 \Re \mathfrak t(u,u) + \|\mathfrak J u\|_{\mathbb L^2}^2 &\, \geq \mu_\Omega^* \|\nabla u\|_{L^2(\Omega)}^2 + c_1\|\nabla_{\Gamma_\dyn} u\|_{L^2(\Gamma_{\dyn}, \mu_{\Gamma_{\dyn}}^*)}^2 + c_1\|\nabla_\Sigma u\|_{L^2(\Sigma, \mu_{\Sigma}^*)}^2 + \|u\|_{L^2(\Omega)}^2\\
&\, \geq c \|u\|_{\dom(\mathfrak t)}^2.
\end{align*}
This inequality carries over to all $u\in \dom(\mathfrak t)$ by density and the continuity of $\J$.
\end{proof}

Now the operator $A_2$ can be derived from $\mathfrak t$ as follows.

\begin{proposition} \label{prop:AE1} There is a closed, densely defined operator $A_2$ on $\mathbb L^2$ associated to the form $\mathfrak t$. For $\varphi,\psi \in \mathbb L^2$ we have $ \varphi\in \dom(A_2)$ and $A_2\varphi = \psi$ if and only if there is
$u\in \dom(\mathfrak t)$ such that $\varphi = \mathfrak J u$ and
$$(\psi, \mathfrak J v)_{\mathbb L^2} = \mathfrak t(u,v)\qquad \text{for all }v\in \dom(\mathfrak t).$$
The operator $-A_2$ generates an analytic $C_0$-semigroup 
$$T_2(\cdot)= (T_2(t))_{t\geq 0}$$ 
of contractions on $\mathbb L^2$. Furthermore, $A_2$ has compact resolvent. \end{proposition}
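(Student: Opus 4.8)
The plan is to recognise the situation as an instance of the abstract theory of $j$-elliptic sesquilinear forms developed in \cite{AE}, and to invoke its main generation theorem. First I would set $V = \dom(\ft)$, $H = \mathbb{L}^2$, $j = \J$ and $a = \ft$. By Lemma \ref{lem:t-J} all hypotheses of that theory are in place: $V$ and $H$ are Hilbert spaces, $\J\colon V \to H$ is bounded with dense range, and $\ft$ is a bounded sesquilinear form on $V$ which is $\J$-elliptic. The abstract theorem then produces a well-defined operator $A_2$ on $H$ associated with $(\ft,\J)$, characterised precisely by the constitutive relation in the statement, i.e. $\varphi \in \dom(A_2)$ with $A_2\varphi = \psi$ if and only if $\varphi = \J u$ for some $u \in V$ satisfying $(\psi,\J v)_{\mathbb{L}^2} = \ft(u,v)$ for all $v \in V$; moreover $A_2$ is closed and $-A_2$ generates a holomorphic $C_0$-semigroup, so that in particular $\dom(A_2)$ is dense.

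For the contractivity I would check that $\ft$ is accretive and sectorial with vertex at the origin. Since $\mu_\Omega$, $\mu_{\Gamma_\dyn}$ and $\mu_\Sigma$ are pointwise nonnegative, splitting $\nabla u$ into real and imaginary parts gives, for $u \in C_D^\infty(\Omega)$ and hence by density for all $u \in \dom(\ft)$, the bound $\Re \ft(u,u) \geq \mu_\Omega^* \|\nabla u\|_{L^2(\Omega)}^2 + c_1\|\nabla_{\Gamma_\dyn} u\|^2_{L^2(\Gamma_\dyn,\mu_{\Gamma_\dyn}^*)} + c_1\|\nabla_\Sigma u\|^2_{L^2(\Sigma,\mu_\Sigma^*)} \geq 0$, while the boundedness assumptions $\|\mu_\SSS(x)\|_{\mathcal L(\R^d)} \leq c_2\mu_\SSS^*(x)$ (and $\|\mu_\Omega\|_\infty < \infty$) bound $|\Im \ft(u,u)|$ by the same quantity up to a constant. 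Thus $|\Im \ft(u,u)| \leq C\, \Re \ft(u,u)$, so the numerical range of $\ft$ lies in a sector with vertex $0$ and half-angle $< \pi/2$. Via the constitutive relation with $v = u$ this transfers to $A_2$, which is therefore m-sectorial and in particular m-accretive; by Lumer--Phillips (or directly by \cite{AE}) $-A_2$ generates a holomorphic $C_0$-semigroup of contractions $T_2(\cdot)$.

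Finally, for the compact resolvent I would use the $\J$-ellipticity to make the shifted form $\ft_\lambda(u,v) = \ft(u,v) + \lambda(\J u, \J v)_{\mathbb{L}^2}$ coercive on $\dom(\ft)$ for $\lambda \geq 1$. By Lax--Milgram, for every $f \in \mathbb{L}^2$ there is a unique $u \in \dom(\ft)$ with $\ft_\lambda(u,v) = (f,\J v)_{\mathbb{L}^2}$ for all $v$, and the solution map $f \mapsto u$ is bounded from $\mathbb{L}^2$ into $\dom(\ft)$. The constitutive relation shows $(\lambda + A_2)^{-1} f = \J u$, so the resolvent factors as the bounded map $f \mapsto u$ followed by $\J\colon \dom(\ft) \to \mathbb{L}^2$, which is compact by Lemma \ref{lem:t-J}; hence $(\lambda + A_2)^{-1}$ is compact, and by the resolvent identity $A_2$ has compact resolvent.

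I expect the main subtlety to lie in the very first step: that $A_2$ is genuinely single-valued even though $\J$ is not injective (distinct $u \in \dom(\ft)$ may share the same trace triple $\J u$). This well-definedness, rather than any estimate, is the crux handled by the $j$-elliptic form machinery of \cite{AE}; once it is granted, the accretivity and compactness arguments above are routine.
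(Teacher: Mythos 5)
Your proposal is correct and follows essentially the same route as the paper: the existence, characterization, generation and compact-resolvent statements are drawn from the $\J$-elliptic form machinery of \cite{AE} via Lemma \ref{lem:t-J}, and contractivity rests on the same core identity $\Re(A_2\varphi,\varphi)=\Re\ft(u,u)\geq 0$ obtained from the constitutive relation with $v=u$. The only differences are cosmetic: where the paper cites \cite[Theorem IX.1.24]{Kato} for contractivity and \cite[Lemma 2.7]{AE} for compactness, you unpack those citations (sectoriality plus Lumer--Phillips, and the Lax--Milgram factorization of the resolvent through the compact map $\J$), which are precisely the standard proofs of the quoted results.
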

\begin{proof}
All assertions except the contraction property are a consequence of Lemma \ref{lem:t-J} and the general results of \cite[Theorem 2.1, Lemma 2.7]{AE}. For the contractivity we observe that for $\varphi\in \dom(A_2)$ with $\varphi = \mathfrak J u$ for $u\in \dom(\mathfrak t)$ we have $\Re(A_2\varphi, \varphi) = \Re \mathfrak t(u,u) \geq 0$. Hence the vertex of $A_2$ is zero and the contractivity of the semigroup follows from \cite[Theorem IX.1.24]{Kato}.
\end{proof}


\subsection{Properties of $A_2$ and extension to $\mathbb L^p$} \label{sec:extension} The key to the extension of $A_2$ to all $\mathbb L^p$-spaces is the $\mathbb L^\infty$-contractivity of the semigroup $T_2(\cdot)$. 
For the contractivity we will employ that $A_2$ is associated to the form $\ft$.
In this situation, suitable invariance criteria for closed convex sets are available.

By $\mathbb L^2_{\R}$ we denote the subspace of real-valued elements of $\mathbb L^2$.

\begin{proposition}\label{p:markovian} The semigroup $T_2(\cdot)$ generated by $-A_2$ leaves $\mathbb L^2_{\R}$ invariant, it is $\L^\infty$-contractive and positive.\end{proposition}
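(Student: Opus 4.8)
The plan is to verify the three assertions by means of the invariance criteria for closed convex sets that are available for operators associated to a sesquilinear form, as in the framework of \cite{AE}. The key tool is the following: for a closed convex set $\mathcal C\subset \mathbb L^2$ with metric projection $P$, the semigroup $T_2(\cdot)$ leaves $\mathcal C$ invariant if and only if for every $u\in \dom(\ft)$ with $\J u\in \mathcal C$ one has $P(\J u)\in \J(\dom(\ft))$, say $P(\J u)=\J w$, together with $\Re\,\ft(w,u-w)\geq 0$. I would apply this criterion successively to the three convex sets $\mathcal C=\mathbb L^2_{\R}$, to $\mathcal C=\{\varphi: \varphi\geq 0\}$ (positivity), and to the order interval $\mathcal C=\{\varphi: |\varphi|\leq \one\}$ ($\mathbb L^\infty$-contractivity, after the standard rescaling argument that reduces the full $\mathbb L^\infty$-estimate to invariance of the unit ball).

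First I would treat the invariance of $\mathbb L^2_{\R}$. The projection onto the real part is $P\varphi = \Re\varphi$, and the crucial point is that the real part commutes with $\J$: if $\varphi=\J u$ then, writing $u=u_1+iu_2$ with $u_1,u_2\in\dom(\ft)$ (the domain is defined as a real-linear completion of $C_D^\infty(\Omega)$ and hence closed under taking real and imaginary parts), one has $\Re(\J u)=\J u_1$, so $P(\J u)\in\J(\dom(\ft))$ with $w=u_1$. It then remains to check $\Re\,\ft(u_1,u-u_1)=\Re\,\ft(u_1,iu_2)$; since $\ft$ is built from the integrals in \eqref{e-form0} and the surface gradients are real-linear operators, a direct computation using $\Re(i z)$ for the bilinear pairings shows that this real part vanishes, giving invariance. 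Positivity is handled analogously on $\mathbb L^2_{\R}$ with $P\varphi=\varphi^+$ and $w=u^+$: here one uses that $\dom(\ft)$ is stable under the truncation $u\mapsto u^+$ (the weighted Sobolev and surface-gradient structure behaves well under truncation, with $\nabla u^+ = \one_{\{u>0\}}\nabla u$ a.e. in the bulk and likewise for $\nabla_\Sigma, \nabla_{\Gamma_\dyn}$), so that $\ft(u^+,u^-)$ reduces to integrals over disjoint supports and vanishes, yielding $\Re\,\ft(u^+,u-u^+)=-\Re\,\ft(u^+,u^-)\geq 0$.

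For the $\mathbb L^\infty$-contractivity I would verify invariance of the unit ball $\mathcal C=\{\varphi\in\mathbb L^2_{\R}: |\varphi|\leq 1\}$, with metric projection $P\varphi=(-1)\vee\varphi\wedge 1$ and candidate $w=(-1)\vee u\wedge 1$. One must confirm $w\in\dom(\ft)$, which again follows from stability of $\dom(\ft)$ under truncation; crucially the truncated surface gradients of the constant-valued pieces vanish, so $\nabla_\Sigma w=\one_{\{|u|<1\}}\nabla_\Sigma u$ and similarly in the bulk and on $\Gamma_\dyn$. Then $u-w$ is supported where $|u|\geq 1$ and has gradient components that, against $\nabla w$, integrate to zero on the overlap, giving $\Re\,\ft(w,u-w)\geq 0$; a scaling $\mathcal C\mapsto \lambda\mathcal C$ then upgrades this to the full $\mathbb L^\infty$-contraction estimate $\|T_2(t)\varphi\|_{\mathbb L^\infty}\leq\|\varphi\|_{\mathbb L^\infty}$.

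The main obstacle I anticipate is justifying that $\dom(\ft)$ is stable under the nonsmooth operations $\Re$, $(\cdot)^+$ and the truncation $(-1)\vee\cdot\wedge 1$, \emph{together} with the correct chain rule for the surface gradients in the Lipschitz, possibly weighted setting of Section \ref{sec:SG}. Since $\dom(\ft)$ is defined abstractly as a completion rather than as a concretely characterized function space, and since the weights $\mu_\Sigma^*,\mu_{\Gamma_\dyn}^*$ may vanish on large sets, one cannot simply invoke the classical $W^{1,2}$ chain rule; instead I would establish these stability and chain-rule properties first for $u\in C_D^\infty(\Omega)$ (where the surface gradient is the honest one of Definition \ref{def:tang}) and then pass to the completion, using the continuity of $\nabla_\Sigma,\nabla_{\Gamma_\dyn}$ on $\dom(\ft)$ recorded in the remark after Definition \ref{def:domt} together with the boundedness and Lipschitz property of the truncation maps to control convergence of the nonlinear truncates in the $\dom(\ft)$-norm. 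Once these normal-contraction-type properties are in place, all three invariance verifications reduce to the elementary sign computations sketched above.
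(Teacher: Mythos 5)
Your overall strategy --- invariance of closed convex sets via the form criteria of \cite{AE}, with precisely the projection and sign computations you sketch ($\Re\ft(u_1,iu_2)=0$ for real $u_1,u_2$ since the coefficients are real, vanishing of the mixed terms for truncations) --- is the paper's strategy, and those elementary computations are correct. But there is a genuine gap at exactly the point you flag as the ``main obstacle'', and your proposed remedy does not close it. You invoke the full-domain version of the invariance criterion, quantified over all $u\in\dom(\ft)$, and therefore need the abstract completion $\dom(\ft)$ to be stable under $(\cdot)^+$ and $(-1)\vee\cdot\wedge 1$, together with a chain rule for the extended surface gradients. Your plan to obtain this by proving the chain rule on $C_D^\infty(\Omega)$ and then ``passing to the completion using the boundedness and Lipschitz property of the truncation maps to control convergence of the nonlinear truncates in the $\dom(\ft)$-norm'' fails: truncation is Lipschitz on $\L^2$, but the induced superposition operator is \emph{not} Lipschitz in the gradient part of the form norm --- even for classical $W^{1,2}$ its mere continuity is the nontrivial Marcus--Mizel theorem \cite{MM}, whose proof uses the concrete pointwise structure of Sobolev functions. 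In the present setting that structure is unavailable: $\dom(\ft)$ is an abstract completion whose surface-gradient components are continuous linear extensions, and since $\mu_{\Gamma_\dyn}^*,\mu_\Sigma^*$ may vanish on large sets the form need not be closable, so an element of $\dom(\ft)$ is not even determined by its underlying function. Consequently there is no a.e.\ chain rule to pass to, and it is not clear that ``$u^+$'' is well defined as an element of $\dom(\ft)$ at all, i.e.\ independently of the approximating sequence.

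The paper avoids this issue by design: it uses the dense-subset criterion \cite[Proposition 2.9(iv)]{AE}, which only requires that for each $u$ in the dense set $C_D^\infty(\Omega)$ there exist $w\in\dom(\ft)$ with $\J w=P\J u$ and $\Re\ft(w,u-w)\geq 0$. For smooth $u$ the truncate $(\Re u)\wedge\1$ is an explicit Lipschitz function, which is placed in $\dom(\ft)$ by smoothing the scalar map $t\mapsto t\wedge 1$ and using the honest chain rule of Section \ref{sec:SG}; stability of the completion under truncation is never needed. Two further, minor differences: the paper treats positivity and $\L^\infty$-contractivity simultaneously through the single one-sided set $\mathcal C=\{\varphi\in\L^2_\R:\varphi\leq\1\}$ combined with the reduction of \cite[Prop.~2.16]{EMR}, rather than your positivity cone and symmetric order interval; and for the $\L^2_\R$-invariance, where the projection $\Re$ is linear, commutes with $\J$ and is bounded on $\dom(\ft)$, the density argument you use is unproblematic and matches the paper's use of \cite[Proposition 2.9(iii)]{AE}. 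To repair your proof, replace the full-domain criterion by the dense-subset criterion and carry out the verification only for $u\in C_D^\infty(\Omega)$.
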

\begin{proof} The set $\mathbb L^2_{\R}$ is closed and convex, and $\varphi\mapsto \Re \varphi$ is the orthogonal projection onto $\L_\R^2$. For $u\in C_D^{\infty}(\Omega)$ we have $\Re\ft(u,u-\Re u) \geq 0$, and this inequality carries over to all $u\in \dom(\ft)$ by density. Hence each $T_2(t)$ leaves $\L^2_\R$ invariant by \cite[Proposition 2.9(iii)]{AE}.

For the $\L^\infty$-contractivity and the positivity, as in \cite[Prop. 2.16]{EMR} it suffices to show that $T_2(\cdot)$
leaves the closed and convex set $\mathcal C = \{\varphi \in \L^2_\R: \varphi \leq \mathds 1\}$ invariant. 
Again, we apply a criterion from \cite{AE}, on a dense subset of $\dom(\ft)$.

For a real-valued function $u$ we define $u\wedge \mathds 1$ by $(u\wedge \mathds 1)(x) = \min(u(x),1).$
The ortho\-gonal projection $P$ of $\L^2$ onto $\mathcal C$ is given by $P\varphi = (\Re \varphi)\wedge \1$. 
Moreover, for $u\in C_D^\infty(\Omega)$ one has $P\J u = \J((\Re u)\wedge \1)$ and 
$$\Re\mathfrak t((\Re u)\wedge \mathds 1,u-(\Re u)\wedge \mathds 1) = 0.$$
Hence, \cite[Proposition 2.9(iv)]{AE} yields the invariance of $\mathcal C$. 
\end{proof}
Now standard interpolation and duality arguments together with \cite[Proposition 3.12]{Ou} allow to extend $T_2(\cdot)$ to the entire $\L^p$-scale as follows.

\begin{proposition}\label{p:contraction} For all $p\in [1,\infty]$ the semigroup $T_2(\cdot)$ generated by $-A_2$ extends consistently to a contraction semigroup $T_p(\cdot)$ on $\mathbb L^p$, which is strongly continuous for $p\in [1,\infty)$ and analytic for $p\in (1,\infty)$.\end{proposition}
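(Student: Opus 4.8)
The plan is to transfer the two endpoint contractivities of $T_2(\cdot)$ to the entire scale by interpolation and duality, and then to recover analyticity from the $\mathbb{L}^2$-analyticity by the Stein-type extrapolation result in \cite[Proposition 3.12]{Ou}. The inputs already at hand are that $T_2(\cdot)$ is a contraction on $\mathbb{L}^2$ (Proposition \ref{prop:AE1}) and that it is positive and $\mathbb{L}^\infty$-contractive (Proposition \ref{p:markovian}).

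First I would establish $\mathbb{L}^1$-contractivity by a duality argument. The $\mathbb{L}^2$-adjoint $A_2^*$ is the operator associated with the adjoint form $\ft^*(u,v) = \overline{\ft(v,u)}$, which arises from $\ft$ by replacing the coefficient matrices $\mu_\Omega,\mu_{\Gamma_\dyn},\mu_\Sigma$ by their pointwise transposes. Since for real $\xi$ one has $(\mu^T\xi,\xi)=(\mu\xi,\xi)$ and $\|\mu^T\|=\|\mu\|$, the transposed matrices satisfy exactly the bounds of Assumption \ref{assu:coeff} with the very same functions $\mu^*$; hence Lemma \ref{lem:t-J} applies to $\ft^*$, and $A_2^*$ is an operator of precisely the same type as $A_2$. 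Moreover the invariance computation of Proposition \ref{p:markovian} is symmetric: since $\Re\ft^*\big((\Re u)\wedge\1,\,u-(\Re u)\wedge\1\big)=\Re\ft\big(u-(\Re u)\wedge\1,\,(\Re u)\wedge\1\big)=0$ by the same purely-imaginary gradient calculation, the reasoning of Proposition \ref{p:markovian} applies verbatim to $\ft^*$, so $T_2(\cdot)^* = (T_2(t)^*)_{t\geq 0}$ is again $\mathbb{L}^\infty$-contractive. By the $L^1$--$L^\infty$ duality pairing this is equivalent to $\mathbb{L}^1$-contractivity of $T_2(\cdot)$.

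With contractivity on $\mathbb{L}^1$, $\mathbb{L}^2$ and $\mathbb{L}^\infty$ in hand, the Riesz--Thorin theorem makes each $T_2(t)$ a contraction on $\mathbb{L}^p$ for every $p\in[1,\infty]$. Restricting to the dense subspace $\mathbb{L}^2\cap\mathbb{L}^p$ and extending by the uniform bound produces, for each $p$, a consistent family $T_p(t)$; consistency renders both the semigroup law and the very definition of $T_p(\cdot)$ unambiguous, and positivity is inherited. Strong continuity for $p\in[1,\infty)$ is part of the standard extrapolation theory for such consistent contraction families, transferring from $\mathbb{L}^2$ via the uniform bounds and the density of $\mathbb{L}^2\cap\mathbb{L}^p$ in $\mathbb{L}^p$, whereas at $p=\infty$ strong continuity is neither expected nor claimed.

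The genuinely nontrivial point, and the main obstacle, is analyticity on $\mathbb{L}^p$ for $p\in(1,\infty)$: mere interpolation of a contraction $C_0$-semigroup does not preserve analyticity, so the holomorphic extension must be produced separately. Here I would invoke \cite[Proposition 3.12]{Ou}, whose hypotheses are met because $T_2(\cdot)$ is positive, $\mathbb{L}^\infty$-contractive, and analytic on $\mathbb{L}^2$ (Proposition \ref{prop:AE1}); its conclusion is a bounded holomorphic extension of $T_2(\cdot)$ to a sector in $\mathbb{L}^p$ for each $p\in(1,\infty)$, with the opening angle degenerating as $p\to 1$ or $p\to\infty$. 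This step rests on a Stein-type interpolation of the holomorphic family $z\mapsto T_2(z)$ along the sector, which is exactly what the cited result packages, so that no further computation is required.
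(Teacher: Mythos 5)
Your proposal is correct and follows essentially the same route as the paper, which compresses the entire argument into the sentence ``standard interpolation and duality arguments together with \cite[Proposition 3.12]{Ou}'': you simply unpack those standard steps, correctly obtaining $\mathbb{L}^1$-contractivity by passing to the adjoint form with transposed coefficients (which satisfies the same hypotheses, so Proposition \ref{p:markovian} applies to it), then interpolating and invoking \cite[Proposition 3.12]{Ou} for analyticity. The only remark worth making is that your duality step implicitly uses that within the framework of \cite{AE} the adjoint semigroup $T_2(\cdot)^*$ is the one associated with the adjoint form, a standard fact there which you may want to cite explicitly.
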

We define
$$\text{$A_p$ is the negative generator of $T_p(\cdot)$.}$$
Then $A_p$ coincides with $A_2$ on $\dom(A_p)\cap \dom(A_2)$. Let the relaxation coefficient $\zeta\in \L^\infty$ be as in Assumption \ref{assu:geo}. Rescaling in measure as in the proof of \cite[Theorem 2.21]{EMR} and using \cite[Proposition 2.20]{EMR}, we obtain that the operators $-\zeta^{-1}A_p$ generate consistent contractive semigroups on (the rescaled) $\L^p$ for $p\in [1,\infty]$, which are analytic for $p\in (1,\infty)$.

We can thus apply \cite[Proposition 2.2]{LX} to obtain the main result of this section.

\begin{theorem}\label{thm:1} For each $p\in (1,\infty)$ the operator $\zeta^{-1} A_p$ with domain $\dom(A_p)$ admits a bounded holomorphic functional calculus on $\mathbb L^p$, with angle strictly smaller than $\frac{\pi}{2}$.  As a consequence, $\zeta^{-1} A_p$ enjoys maximal parabolic $L^s$-regularity for all $s\in (1,\infty)$, and $-\zeta^{-1} A_p$ generates an analytic $C_0$-semigroup on $\L^p$. Furthermore, the fractional power domains are given by complex interpolation, i.e.,
$$\dom(A_p^\theta) = [\mathbb L^p, \dom (A_p)]_\theta,\qquad \theta\in [0,1],$$ 
and the resolvent of $\zeta^{-1} A_p$ is compact. \end{theorem}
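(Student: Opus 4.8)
The plan is to derive all five assertions from the single deep fact that $\zeta^{-1}A_p$ admits a bounded holomorphic functional calculus, and to obtain that fact from the extrapolation result \cite[Proposition 2.2]{LX}. First I would verify that its hypotheses are met: by Proposition \ref{p:contraction} together with the rescaling argument recorded just before the theorem, the operators $\zeta^{-1}A_p$ form a consistent family whose semigroups are contractive on the whole scale $\mathbb L^p$, $p\in[1,\infty]$, and analytic for $p\in(1,\infty)$. Since each $\mathbb L^p$, $1<p<\infty$, is a scalar $L^p$-space over the $\sigma$-finite measure space $\big((\Omega\setminus\Sigma)\cup\Gamma_\dyn\cup\Sigma,\,dx+\mathcal H_{d-1}\big)$, the cited proposition applies directly and delivers the bounded $H^\infty$-calculus of angle strictly below $\tfrac{\pi}{2}$; the sharpness of the angle reflects the sub-Markovian (contractive) structure of the semigroup.

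From here the consequences are essentially routine. Since $\mathbb L^p$ is a UMD space for $1<p<\infty$, a bounded $H^\infty$-calculus of angle $<\tfrac{\pi}{2}$ yields $R$-sectoriality of the same angle, and Weis' characterisation then gives maximal parabolic $L^s$-regularity for all $s\in(1,\infty)$; the surveys \cite{Dore, KuWe, Pruess} collect exactly these implications. Sectoriality of angle $<\tfrac{\pi}{2}$ also re-confirms that $-\zeta^{-1}A_p$ generates an analytic $C_0$-semigroup, consistent with the one already constructed in Proposition \ref{p:contraction}.

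For the identification of the fractional power domains I would apply the same reasoning to the unrescaled operator $A_p$: the hypotheses of \cite[Proposition 2.2]{LX} hold for $A_p$ as well, since by Proposition \ref{p:contraction} the semigroups $T_p$ are contractive on $[1,\infty]$ and analytic on $(1,\infty)$. Hence $A_p$ too has a bounded $H^\infty$-calculus, so that $\lambda+A_p$ has bounded imaginary powers with controlled group type for $\lambda>0$. The classical identification of fractional power domains for operators with bounded imaginary powers (see \cite{KuWe, Pruess}) then gives
$$\dom\big((\lambda+A_p)^\theta\big)=\big[\mathbb L^p,\dom(A_p)\big]_\theta,\qquad\theta\in[0,1],$$
and, the lower-order shift leaving the interpolation scale unchanged, this is the asserted description of $\dom(A_p^\theta)$.

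It remains to transfer the compactness of the resolvent, known on $\mathbb L^2$ from Proposition \ref{prop:AE1} (and surviving the bounded rescaling by $\zeta$, since multiplication by $\zeta^{\pm1}$ is an isomorphism and $\zeta^{-1}A_2$ differs from $A_2$ by such factors up to a bounded perturbation), to all $p\in(1,\infty)$. As the resolvents are consistent across the scale, I would invoke interpolation of compactness for $L^p$-spaces: an operator that is compact on $\mathbb L^2$ and bounded on a second $\mathbb L^{p_1}$ is compact on every intermediate space $\mathbb L^{p_\theta}=[\mathbb L^2,\mathbb L^{p_1}]_\theta$. Interpolating once towards large $p_1$ and once, through the adjoint semigroup, towards small $p_1$ then covers the whole range $1<p<\infty$. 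The genuinely hard analysis is packaged into \cite[Proposition 2.2]{LX}; beyond that citation the only step that is not automatic is this interpolation of compactness, which is nonetheless classical for $L^p$-scales precisely because one endpoint operator is itself compact.
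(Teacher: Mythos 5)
Your proposal is correct and takes essentially the same route as the paper: the whole theorem is assembled from \cite[Proposition 2.2]{LX}, whose hypotheses are verified exactly as you do, via Proposition \ref{p:contraction} and the rescaling in measure (following \cite{EMR}) that makes the semigroups generated by $-\zeta^{-1}A_p$ consistent contractions on the rescaled $\L^p$-scale and analytic for $p\in(1,\infty)$, with maximal regularity and the complex-interpolation identity for $\dom(A_p^\theta)$ then following by the standard $H^\infty$-calculus/BIP machinery cited in \cite{Dore,KuWe,Pruess}. Your Krasnoselskii-type interpolation of compactness, transferring the compact resolvent from $\mathbb L^2$ (Proposition \ref{prop:AE1}) to all $p\in(1,\infty)$ using boundedness at the endpoints, is precisely the argument the paper leaves implicit, so no gap remains.
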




\section{Degenerate bulk diffusion}\label{sec:deg}
In this section we generalize the above setting and allow for degeneracies in the bulk diffusion coefficient $\mu_\Omega$. 
Of special interest is the case when the degeneracy takes place at the dynamic boundary part $\Gamma_\dyn$ or 
the dynamic interface $\Sigma$. In this case the continuity of the map $\J:\dom(\ft)\to \L^2$, which is crucial for the approach used in the last section, depends on the degeneracy of the bulk diffusion.

Throughout we keep Assumption \ref{assu:geo}, but we replace the uniform ellipticity of $\mu_\Omega$ by the assumption that there are constants $c_1,c_2 > 0$ such that
\begin{equation}\label{e:muOmega}\big( \mu_\Omega(x)\xi,\xi\big) \geq c_1\mu_\Omega^*(x)|\xi|^2, \qquad \|\mu_\Omega(x)\|_{\mathcal L(\R^d)} \leq c_2 \mu_\Omega^*(x), \qquad x \in \Omega,\quad \xi\in \R^d,
\end{equation}
where $$\mu_\Omega^*(x) = \text{dist}(x,S)^\gamma$$
for a compact $(d-k)$-dimensional Lipschitz submanifold $S\subset \overline{\Omega}$, $1\leq k \leq d$, and $0 \leq \gamma <k$ for the distance exponent. We refer to  Section \ref{sec:nondeg} for a definition of a Lipschitz submanifold.

Observe that for $\gamma = 0$ we are in the nondegenerate situation of the previous section. 
We must distinguish the two cases 
\begin{align*}\text{Case } (\mathrm{A}): & \text{  $\mu_\Omega$ degenerates at a distance from 
the dynamics surfaces only, } S \cap (\overline{\Gamma_\dyn \cup \Sigma}) = \emptyset, \\ 
\text{Case } (\mathrm{B}): & \text{  $\mu_\Omega$ degenerates directly at the dynamics surfaces, } 
S \cap (\overline{\Gamma_\dyn \cup \Sigma}) \neq \emptyset, 
  \end{align*}
see the figure below.

\begin{figure}[h] 
\centering
\includegraphics[width=.3\textwidth]{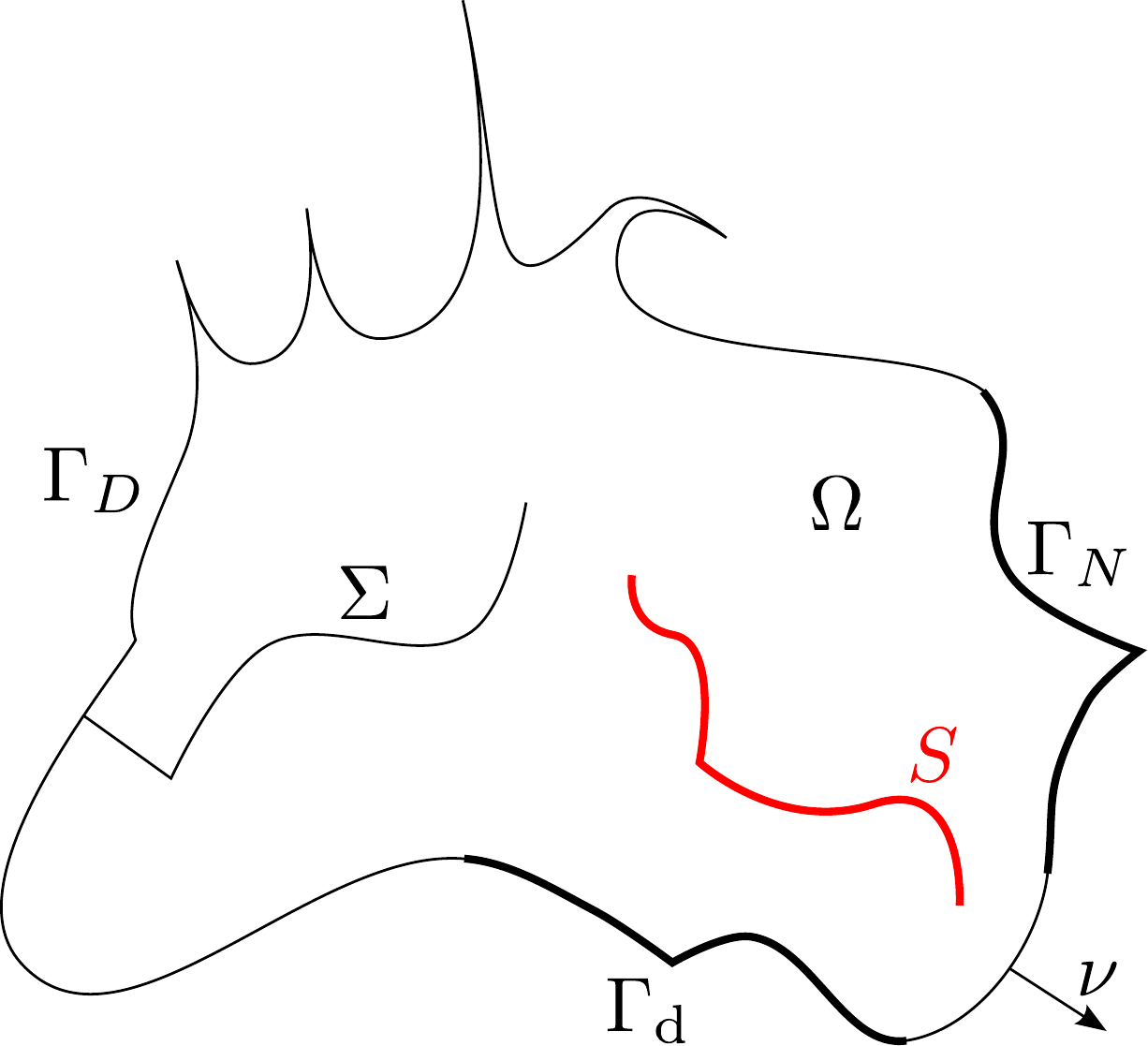}
\qquad \qquad \qquad                 
        ~ 
\includegraphics[width=.3\textwidth]{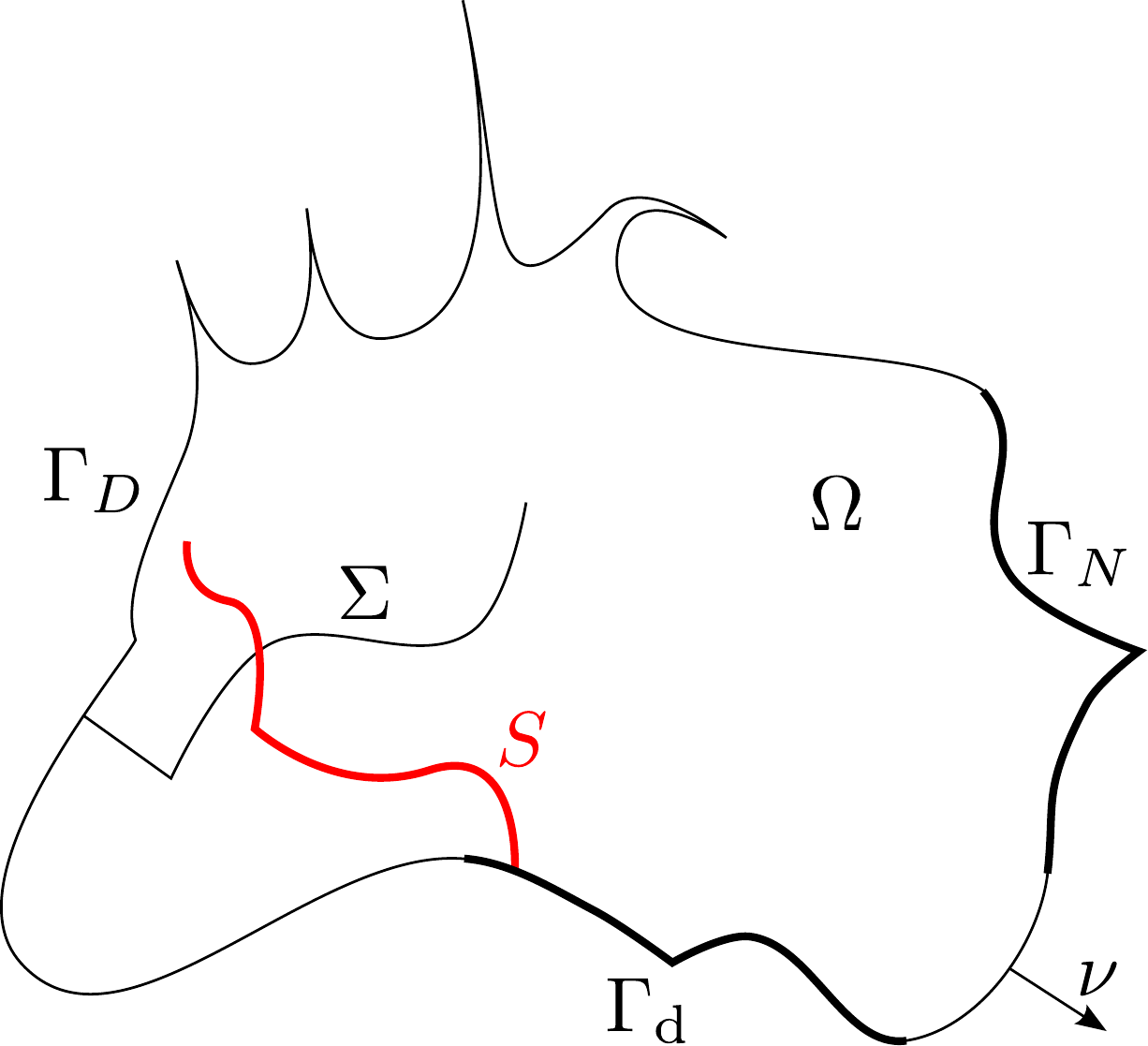}
                                       ~ 
       \caption{\qquad \quad Case (A) \quad \qquad \qquad \qquad \qquad \qquad \quad Case (B) \qquad \quad}
\end{figure} \label{fig:AB}


\subsection{Weighted function spaces} In order to incorporate the degeneracy of $\mu_\Omega$ into the domain of the sesquilinear form $\mathfrak t$ we introduce weighted function spaces.

We define $W^{1,2}_D(\Omega, \mu_\Omega^*)$ as the closure of $C_D^\infty(\Omega)$ with respect to the norm $\|\cdot\|_{W^{1,2}_D(\Omega, \mu_\Omega^*)}$, which is given by
$$\|u\|_{W^{1,2}_D(\Omega, \mu_\Omega^*)}^2 =  \|u\|_{L^2(\Omega)}^2 + \|\nabla u\|_{L^2(\Omega, \mu_\Omega^*)}^2.$$
As before, here we write
$$\|f\|_{L^2(\Omega, \mu_\Omega^*)}^2 = \int_\Omega |f|^2 \mu_\Omega^*\,dx.$$
 Note that $\mu_\Omega^*$ appears as a weight only in the gradient, the $L^2(\Omega)$-norm remains unweighted. 

We record the following properties. For the general theory of \emph{Muckenhoupt weights} we refer to \cite[Chapter 9]{Gra}.

\begin{lemma} \label{lem:weight} \begin{itemize}
 \item[\text{(a)}] The weight $\mu_\Omega^*$ belongs to the Muckenhoupt class $\mathcal A_2$.
 \item[\text{(b)}] One has the continuous embedding $W^{1,2}_D(\Omega, \mu_\Omega^*) \subset W^{1,1}(\Omega)$.
 \item[\text{(c)}] $W^{1,2}_D(\Omega, \mu_\Omega^*)$ is a Hilbert space with scalar product
$$(u,v)_{W^{1,2}_D(\Omega, \mu_\Omega^*)} = (u,v)_{L^2(\Omega)} + \int_{\Omega} (\nabla u,\nabla v)\, \mu_\Omega^*\,dx.$$
 \end{itemize}
\end{lemma}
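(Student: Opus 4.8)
The plan is to establish the three assertions in order, treating (a) as the analytic core and deducing (b) and (c) from it by soft arguments.

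For (a), I would reduce the verification of the Muckenhoupt $\mathcal A_2$ condition (cf. \cite[Chapter 9]{Gra}) to a local model computation. Since $S$ is compact, I can cover a neighbourhood of $S$ by finitely many bi-Lipschitz charts that flatten $S$ to a piece of the coordinate subspace $\R^{d-k}\times\{0_{\R^k}\}$, on which $\dist(\cdot,S)$ is comparable to $|z|$ for $x=(y,z)$ with $y\in\R^{d-k}$, $z\in\R^k$. Away from $S$ the weight $\mu_\Omega^*=\dist(\cdot,S)^\gamma$ is bounded above and below by positive constants on the bounded set $\Omega$, hence trivially $\mathcal A_2$ there. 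In the model, the power weight $|z|^\gamma$ lies in $\mathcal A_2(\R^k)$ precisely when $-k<\gamma<k$, and adjoining the $y$-variables (in which the weight is constant) preserves membership; since $0\le\gamma<k$ by assumption, the model weight is $\mathcal A_2$. Finally, bi-Lipschitz maps distort balls boundedly and therefore map $\mathcal A_2$ weights to $\mathcal A_2$ weights, so the local pieces combine to give $\mu_\Omega^*\in\mathcal A_2$. I record here that the $\mathcal A_2$ condition is symmetric under $w\mapsto w^{-1}$, so $(\mu_\Omega^*)^{-1}=\dist(\cdot,S)^{-\gamma}$ is $\mathcal A_2$ as well; as $\Omega$ is bounded, both $\mu_\Omega^*$ and $(\mu_\Omega^*)^{-1}$ are integrable over $\Omega$.

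For (b), the estimate $\|u\|_{L^1(\Omega)}\le|\Omega|^{1/2}\|u\|_{L^2(\Omega)}$ handles the function itself, while for the gradient I would write $|\nabla u|=|\nabla u|(\mu_\Omega^*)^{1/2}(\mu_\Omega^*)^{-1/2}$ and apply the Cauchy--Schwarz inequality to obtain $\|\nabla u\|_{L^1(\Omega)}\le\|\nabla u\|_{L^2(\Omega,\mu_\Omega^*)}\big(\int_\Omega(\mu_\Omega^*)^{-1}\,dx\big)^{1/2}$, the last factor being finite by (a). This yields $\|u\|_{W^{1,1}(\Omega)}\le C\|u\|_{W^{1,2}_D(\Omega,\mu_\Omega^*)}$ for $u\in C_D^\infty(\Omega)$, which extends to the completion by density; a Cauchy sequence in the weighted norm is then Cauchy in $W^{1,1}(\Omega)$. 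To see that this identifies $W^{1,2}_D(\Omega,\mu_\Omega^*)$ with an actual subspace of $W^{1,1}(\Omega)$ rather than an abstract completion, I would check injectivity: if $u_n\in C_D^\infty(\Omega)$ is Cauchy in the weighted norm and $u_n\to0$ in $W^{1,1}(\Omega)$, then the $L^2(\Omega)$- and $L^2(\Omega,\mu_\Omega^*)$-limits of $u_n$ and $\nabla u_n$ must vanish (each such convergence forces $L^1$-convergence to the same limit, which is $0$), so the weighted norm of $u_n$ tends to $0$.

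Part (c) is then essentially formal: the scalar product displayed in the statement is exactly the one inducing $\|\cdot\|_{W^{1,2}_D(\Omega,\mu_\Omega^*)}$ on the dense subspace $C_D^\infty(\Omega)$, so the completion is automatically a Hilbert space and the inner product extends by continuity. The content provided by (b) is that each element of the completion is a genuine function in $W^{1,1}(\Omega)$ with weak gradient in $L^1(\Omega)\cap L^2(\Omega,\mu_\Omega^*)$ and with $L^2(\Omega)$-part, so that the formula $(u,v)=(u,v)_{L^2(\Omega)}+\int_\Omega(\nabla u,\nabla v)\mu_\Omega^*\,dx$ is meaningful with the honest gradients. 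The main obstacle is the $\mathcal A_2$ verification in (a): carrying the power-weight computation through the bi-Lipschitz charts and confirming that the admissible range is governed precisely by the codimension $k$ is where the real work lies, whereas (b) and (c) are consequences of Cauchy--Schwarz and the abstract completion construction.
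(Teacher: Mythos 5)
Your proposal is correct, and for parts (b) and (c) it follows essentially the same route as the paper: the paper also obtains (b) from H\"older's inequality (citing \cite[Exercise 9.3.6]{Gra} for $L^2(\Omega,\mu_\Omega^*)\subset L^1(\Omega)$, which amounts to your integrability of $(\mu_\Omega^*)^{-1}$) and treats (c) as a formal consequence; your explicit check that the completion injects into $W^{1,1}(\Omega)$ --- so that its elements are honest functions and the displayed scalar product is meaningful with genuine weak gradients --- is a worthwhile detail the paper leaves implicit. The genuine difference is in (a): the paper disposes of it with a single citation to \cite[Lemma 2.3]{FS}, which states precisely that $\dist(\cdot,S)^\gamma\in\mathcal{A}_2$ for a compact $(d-k)$-dimensional Lipschitz submanifold $S$ and $0\leq\gamma<k$, whereas you sketch a self-contained proof: flattening charts, the model computation $|z|^\gamma\in\mathcal{A}_2(\R^k)$ iff $-k<\gamma<k$, adjoining dummy variables, bi-Lipschitz invariance of $\mathcal{A}_2$, and gluing. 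This is in essence the standard proof of the cited lemma and buys independence from the reference, but if you execute it, two steps that your sketch glosses over carry the real work: (i) passing from uniformly local $\mathcal{A}_2$ estimates in charts to the $\mathcal{A}_2$ condition for \emph{all} cubes is not automatic --- intermediate scales, cubes straddling several charts, and cubes near chart boundaries (where $\dist(x,S)$ may be realized by points of $S$ outside the chart) need a Lebesgue-number/scale-splitting argument; and (ii) the paper needs $\mu_\Omega^*\in\mathcal{A}_2$ as a weight on all of $\R^d$, since Chua's extension operator in Lemma \ref{lem:extension} and the embeddings of \cite{HS} in Proposition \ref{prop:Sob-weight} are formulated on $\R^d$, so the large-cube regime, where $\dist(x,S)^\gamma\sim|x|^\gamma$ with $\gamma<k\leq d$, must also be verified --- restricting the ``trivially $\mathcal{A}_2$'' regime to the bounded set $\Omega$, as you do, is not quite sufficient. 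Neither point is a flaw in the plan, only gaps natural to a sketch; they are exactly the content outsourced by the paper to Farwig--Sohr.
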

\begin{proof} Assertion (a) follows from our assumption $0\leq\gamma < k$, see \cite[Lemma 2.3]{FS}. Using H\"older's inequality, it is straightforward to check that $L^2(\Omega, \mu_\Omega^*) \subset L^1(\Omega)$ (see also \cite[Exercise 9.3.6]{Gra}), which yields (b). Then (c) follows from (a). \end{proof}

To prove the continuity of $\tr_{\Gamma_{\dyn}}$ and $\tr_\Sigma$ on $W^{1,2}_D(\Omega, \mu_\Omega^*)$ we start with an extension operator of this space to $W^{1,2}(\R^d, \mu_\Omega^*)$. Here the norm is given by
$$\|u\|_{W^{1,2}(\R^d, \mu_\Omega^*)}^2 = \|u\|_{L^2(\R^d)}^2 + \|\nabla u\|_{L^2(\R^d, \mu_\Omega^*)}^2.$$

\begin{lemma} \label{lem:extension} There is a continuous extension operator $\mathcal E:W_D^{1,2}(\Omega,\mu_\Omega^*)
  \to W^{1,2}(\R^d, \mu_\Omega^*).$ 
For any $u\in W_D^{1,2}(\Omega,\mu_\Omega^*)$ we have that 
$\supp \mathcal E u \subset B(0,2R)$, where $R = \sup\{|x|:x\in \Omega\}$.
\end{lemma}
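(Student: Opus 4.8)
The plan is to construct $\mathcal E$ first on the dense subspace $C_D^\infty(\Omega)$ by a localization argument, establish a uniform bound in the weighted norm, and then pass to all of $W_D^{1,2}(\Omega,\mu_\Omega^*)$ by continuity. The decisive structural facts are that $\mu_\Omega^* = \dist(\cdot,S)^\gamma$ with $0\le\gamma<k$ is a Muckenhoupt $\mathcal A_2$ weight on all of $\R^d$ (as in Lemma \ref{lem:weight}(a), now applied with $S\subset\R^d$), that near the good boundary parts $\overline{\Gamma_N\cup\Gamma_\dyn}$ the domain is locally a bounded Lipschitz domain by Assumption \ref{assu:geo}(c),(d), and that every element of $C_D^\infty(\Omega)$ vanishes in a neighbourhood of $\Gamma_D$.

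First I would fix a finite open cover $\{V_j\}_{j=0}^m$ of $\overline\Omega$ with a subordinate partition of unity $\{\chi_j\}$, $\sum_j\chi_j\equiv 1$ on $\overline\Omega$, chosen so that: $V_0\Subset\Omega$; for $j$ in an index set $\mathcal G$ (balls centred at points of $\overline{\Gamma_N\cup\Gamma_\dyn}$) the set $\Omega\cap V_j$ is a bounded Lipschitz domain, which holds even at points where $\Gamma_D$ meets $\Gamma_N$ or $\Gamma_\dyn$ by the local Lipschitz-graph structure guaranteed there; and for the remaining $j\in\mathcal D$ one has $V_j\cap\partial\Omega\subset\Gamma_D$ (possible because $\partial\Omega\setminus\overline{\Gamma_N\cup\Gamma_\dyn}\subset\Gamma_D$, so small enough balls around such points meet $\partial\Omega$ only in $\Gamma_D$). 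Writing $u=\sum_j\chi_j u$, I would extend each piece separately and then sum.

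On $V_0$ and on the Dirichlet balls $V_j$, $j\in\mathcal D$, I would extend $\chi_j u$ by zero. For $V_0$ the function $\chi_0u\in C_c^\infty(\Omega)$; for $j\in\mathcal D$ the function $u$ vanishes near $\Gamma_D\supset V_j\cap\partial\Omega$, so $\chi_j u$ vanishes near $\partial\Omega\cap V_j$ and its zero extension lies in $W^{1,2}(\R^d,\mu_\Omega^*)$ with weighted norm equal to the interior norm of $\chi_j u$. Although the neighbourhood on which $u$ vanishes depends on $u$, the resulting norm identity, and hence the bound $\|\mathcal E_j(\chi_j u)\|_{W^{1,2}(\R^d,\mu_\Omega^*)}\le C\|u\|_{W_D^{1,2}(\Omega,\mu_\Omega^*)}$ with $C$ coming only from $\|\nabla\chi_j\|_\infty$, is uniform. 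On each good ball $V_j$, $j\in\mathcal G$, I would instead apply a bounded extension operator for $\mathcal A_2$-weighted Sobolev spaces on the bounded Lipschitz domain $\Omega\cap V_j$ (of Stein/Chua type), yielding $\mathcal E_j(\chi_j u)\in W^{1,2}(\R^d,\mu_\Omega^*)$ with $\mathcal E_j(\chi_j u)|_{\Omega\cap V_j}=\chi_j u$ and a uniform bound. Setting $\mathcal E u=\eta\sum_j\mathcal E_j(\chi_j u)$ for a fixed cutoff $\eta\in C_c^\infty(B(0,2R))$ equal to $1$ on a neighbourhood of $\overline\Omega$ then gives $\mathcal E u|_\Omega=u$, the support condition $\supp\mathcal E u\subset B(0,2R)$, and the global bound; density of $C_D^\infty(\Omega)$ in $W_D^{1,2}(\Omega,\mu_\Omega^*)$ completes the construction.

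The main obstacle is the weighted extension on the good boundary balls in Case (B), where $\mu_\Omega^*$ degenerates exactly on the surface being crossed. Here I would flatten the Lipschitz boundary in $V_j$ by bi-Lipschitz graph coordinates, reducing to extension across $\{x_d=0\}$, and verify that the pulled-back weight is comparable to $\dist(\cdot,\tilde S)^\gamma$ and still lies in $\mathcal A_2$, using that $\mathcal A_2$ is invariant under bi-Lipschitz changes of variables; the even-reflection operator is bounded on $W^{1,2}$ of the half-space with any $\mathcal A_2$ weight, which produces $\mathcal E_j$. Checking this $\mathcal A_2$-invariance and the comparability of the transformed weight, uniformly over the finitely many charts, is the technical heart of the argument.
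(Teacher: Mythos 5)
Your overall architecture is the same as the paper's: a finite cover of $\overline\Omega$ with a subordinate partition of unity, zero extension on patches meeting only $\Gamma_D$ (where $u\in C_D^\infty(\Omega)$ vanishes), a weighted extension operator of Chua type on patches meeting $\overline{\Gamma_N\cup\Gamma_\dyn}$, cut-offs to force $\supp\mathcal Eu\subset B(0,2R)$, and density of $C_D^\infty(\Omega)$. Before the main point, one repairable slip: with $\mathcal Eu=\eta\sum_j\mathcal E_j(\chi_j u)$ and $\eta\equiv 1$ near $\overline\Omega$, you do \emph{not} get $\mathcal Eu|_\Omega=u$, because $\mathcal E_j(\chi_j u)$ need not vanish on $\Omega\setminus V_j$, and the outer cut-off $\eta$ does nothing there. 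You need per-chart cut-offs $\phi_j$ with $\supp\phi_j\subset V_j$ and $\phi_j\equiv 1$ on $\supp\chi_j$, applied \emph{after} each extension; this is exactly the role of the functions $\phi_\alpha$ in the paper's proof (which moreover extends $\psi_\alpha u$ from a shrunken Lipschitz subdomain $W_\alpha=\varphi_\alpha^{-1}(\widetilde Q_-)$ containing $\supp\psi_\alpha$, produced via a subcube in the bi-Lipschitz chart, rather than from $\Omega\cap V_\alpha$ itself).

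The genuine gap is precisely at what you call the technical heart. The claim that ``the even-reflection operator is bounded on $W^{1,2}$ of the half-space with any $\mathcal A_2$ weight'' is false: boundedness of the reflection $u\mapsto u(x',-x_d)$ on the gradient term forces the pointwise comparability $w(x',-x_d)\leq C\,w(x',x_d)$ on the half-space, i.e.\ the weight must be comparable to its own reflection. For $w=\dist(\cdot,\tilde S)^\gamma$ this fails exactly in the situation you are trying to handle: in Case (B) the set $S$ may meet $\Gamma_\dyn$ transversally, so after flattening, $\tilde S$ is a Lipschitz set protruding into the lower half-space and touching $\{x_d=0\}$, and it is not reflection-symmetric. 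Concretely, let $\tilde S$ contain a segment ending at $s_0=(0',-1)$ and take $u_\delta(x)=\chi((x-s_0)/\delta)$: then $\|u_\delta\|_{L^2}^2+\|\nabla u_\delta\|^2_{L^2(w)}\simeq \delta^d+\delta^{d-2+\gamma}$, while the reflected function has weighted gradient norm $\simeq \delta^{d-2}$ (the weight at the reflected points is $\simeq 1$), so the operator norm blows up like $\delta^{-\gamma/2}$ as $\delta\to 0$. The same obstruction can occur in Case (A) near $\Gamma_N$, since only $S\cap(\overline{\Gamma_\dyn\cup\Sigma})=\emptyset$ is assumed. Your bi-Lipschitz invariance claim for $\mathcal A_2$ is correct but does not rescue the reflection step. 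The repair is what the paper actually does: do not prove the local weighted extension, but invoke Chua's extension theorem \cite{Chua} for $\mathcal A_p$-weighted Sobolev spaces on Lipschitz domains as a black box. Its Jones/Whitney-type construction matches each exterior Whitney cube $Q'$ with an interior cube $Q^*$ of comparable size such that $Q'$ and $Q^*$ lie in fixed dilates of one another, so only the doubling property of $w$ (automatic for $\mathcal A_2$) enters via $w(Q')\simeq w(Q^*)$ --- no pointwise comparison of $w$ with its reflection is needed, which is exactly why it survives the degenerate Case (B) where naive reflection does not.
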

\begin{proof} \emph{Step 1}. From Assumption \ref{assu:geo} we find a finite open covering $\bigcup_{\alpha=1}^N V_\alpha \subset B(0,2R)$ of $\overline{\Omega}$ with the following properties. For $\alpha = 1,\ldots, N_\Omega$ the sets $V_\alpha$ are strictly contained in $\Omega$; for $\alpha = N_\Omega +1 ,\ldots, N_D$ we have $V_\alpha \cap \Gamma_D \neq \emptyset$ and $V_\alpha \cap (\overline{\Gamma_N\cup \Gamma_\dyn}) = \emptyset$;   for $\alpha = N_D+1,..., N$ there is a bi-Lipschitz map $\varphi_\alpha$ from $V_\alpha$ to the open unit cube $Q$ in $\R^d$  such that 
$$\varphi_\alpha(\Omega\cap V_\alpha) = Q_-, \qquad \varphi_\alpha(\partial \Omega \cap V_\alpha) = Q_0,$$
where $Q_-\subset Q$ is the open lower half-cube in $\R^d$ and $Q_0 = \{x\in Q: x_d = 0\}$. We further take a smooth partition of unity $(\psi_\alpha)_\alpha$ for $\overline{\Omega}$ subordinate to the cover $\bigcup_{\alpha} V_\alpha$, i.e., such that $\supp \psi_\alpha$ is contained in $V_\alpha$.

\emph{Step 2}. For any  $u\in C_D^\infty(\Omega)$ and $\alpha = N_D+ 1,\ldots, N$ we have that $\psi_\alpha u$ is compactly supported in $\overline{\Omega} \cap V_\alpha$. Choose an open subcube $\widetilde Q\subset Q$ such that $\varphi_\alpha(\supp \psi_\alpha) \subset \widetilde Q$.  Then  $W_\alpha = \varphi_\alpha^{-1}(\widetilde{Q}_-)$ is a domain with Lipschitz boundary which contains $\supp \psi_\alpha$. Finally, take smooth cut-off functions $\phi_\alpha$ such that $\phi_\alpha \equiv 1$ on $\supp \psi_\alpha$ and $\supp \phi_\alpha \subset V_\alpha$. 

\emph{Step 3}. Now for $u\in C_D^\infty(\Omega)$ we define  $\mathcal E u$ by
$$\mathcal E u = \sum_{\alpha = 1}^{N_\Omega} \psi_\alpha u + \sum_{\alpha = N_\Omega +1}^{N_D} \mathcal E_\alpha( \psi_\alpha u) + \sum_{\alpha = N_D + 1}^{N} \phi_\alpha \mathcal E_\alpha (\psi_\alpha u|_{W_\alpha}),$$
where the extensions $\mathcal E_\alpha$ are given as follows. For $\alpha = N_\Omega+1,\ldots,N_D$ we define $\mathcal E_\alpha( \psi_\alpha u)$ as the trivial extension by zero of $\psi_\alpha u$ from $V_\alpha\cap \Omega$ to $\R^d$. Since $V_\alpha \cap (\overline{\Gamma_N\cup \Gamma_\dyn}) = \emptyset$ and $u$ is supported away from $\Gamma_D$, for those $\alpha$ we have
$$\|\mathcal E_\alpha( \psi_\alpha u) \|_{W^{1,2}(\R^d, \mu_\Omega^*)} = \|\psi_\alpha u \|_{W^{1,2}(\Omega, \mu_\Omega^*)} \leq C \|u\|_{W^{1,2}(\Omega, \mu_\Omega^*)}.$$
For $\alpha = N_D+1,\ldots N$ we let $\mathcal E_\alpha : W^{1,2}(W_\alpha,\mu_\Omega^*) \to W^{1,2}(\mathbb{R}^d,\mu_\Omega^*)$ be the extension operator from \cite{Chua} for the Lipschitz domain $W_\alpha$. Then 
\begin{align*}
 \|\phi_\alpha \mathcal E_\alpha( \psi_\alpha u|_{W_\alpha}) \|_{W^{1,2}(\R^d, \mu_\Omega^*)} &\, \leq C \|\mathcal E_\alpha( \psi_\alpha u|_{W_\alpha}) \|_{W^{1,2}(\R^d, \mu_\Omega^*)} \leq  C \|\psi_\alpha u|_{W_\alpha} \|_{W^{1,2}(W_\alpha, \mu_\Omega^*)} \\
 &\, = C \|\psi_\alpha u\|_{W^{1,2}(\Omega, \mu_\Omega^*)} \leq  C \|u\|_{W^{1,2}(\Omega, \mu_\Omega^*)}.
\end{align*}
Therefore $\mathcal E$ extends continuously from $C_D^\infty(\Omega)$ to  $\mathcal E:W_D^{1,2}(\Omega,\mu_\Omega^*) \to W^{1,2}(\R^d, \mu_\Omega^*),$ which gives the desired extension operator. \end{proof}

In a next step we prove Sobolev embeddings of $W^{1,2}(\R^d, \mu_\Omega^*)$ into unweighted Slobodetskii spaces $W^{\theta,q}(\R^d)$, by using the criteria derived in \cite{HS}.

\begin{proposition} \label{prop:Sob-weight} Assume  $q \in [2,\infty)$ and $\theta\in (0,1)$  are such that $1 - \frac{d+\gamma}{2} \geq \theta - \frac{d}{q}.$ Then 
$$W^{1,2}(\R^d, \mu_\Omega^*) \subset W^{\theta,q}(\R^d).$$
\end{proposition}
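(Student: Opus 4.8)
The plan is to recast both spaces in the Triebel--Lizorkin/Besov scale and then to read off the embedding from the weighted criteria of \cite{HS}. First I would record the two identifications. On the target side, since $\theta\in(0,1)$ is non-integer, the Slobodetskii space coincides with a Besov space, $W^{\theta,q}(\R^d) = B^\theta_{q,q}(\R^d) = F^\theta_{q,q}(\R^d)$. On the source side, the weight $\mu_\Omega^* = \dist(\cdot,S)^\gamma$ lies in $\mathcal A_2$ by Lemma \ref{lem:weight}(a), and for $\mathcal A_2$-weights the classical first-order weighted Sobolev space (with the weight on both the function and its gradient) agrees with the Littlewood--Paley-defined space, $W^{1,2}(\R^d,\mu_\Omega^*) = F^1_{2,2}(\R^d,\mu_\Omega^*)$, with equivalent norms. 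It therefore suffices to establish $F^1_{2,2}(\R^d,\mu_\Omega^*) \hookrightarrow F^\theta_{q,q}(\R^d)$.

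A minor point to dispatch first is that in our norm the weight appears only in the gradient, whereas the weighted scale above weights all frequencies uniformly. This discrepancy is harmless for the intended use, since by Lemma \ref{lem:extension} the functions of interest are supported in $B(0,2R)$, on which $\mu_\Omega^* = \dist(\cdot,S)^\gamma$ is bounded (as $\gamma\ge 0$). Hence $\|u\|_{L^2(\R^d,\mu_\Omega^*)} \le C\|u\|_{L^2(\R^d)}$, so the fully weighted norm is dominated by our norm, and the asserted embedding follows by composing our inclusion into the fully weighted space with the embedding of the previous paragraph.

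The heart of the matter is to feed $\mu_\Omega^*$ into the HS machinery and to verify that their sharp sufficient condition reduces \emph{exactly} to $1-\frac{d+\gamma}{2}\geq \theta - \frac{d}{q}$. I would argue that the weight $\dist(\cdot,S)^\gamma$, with $S$ a compact $(d-k)$-dimensional Lipschitz submanifold, is of the local Muckenhoupt type covered by \cite{HS}: in bi-Lipschitz coordinates flattening $S$ it behaves like $|z|^\gamma$ with $z\in\R^k$ the transversal variable, and the constraint $0\le\gamma<k$ is precisely what keeps it in $\mathcal A_2$ (both $|z|^\gamma$ and $|z|^{-\gamma}$ being locally integrable over $\R^k$). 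One then checks that the only effect of the weight on the HS embedding line is to replace the source dimension $d$ by the effective dimension $d+\gamma$, so that the general condition $s_0 - d_{\mathrm{eff}}/p_0 \ge s_1 - d/p_1$ with $(s_0,p_0)=(1,2)$ and $(s_1,p_1)=(\theta,q)$ becomes the asserted inequality. The hypothesis $q\ge 2$ guarantees $p_0\le p_1$, i.e. that we genuinely trade smoothness for integrability, and equality on the critical line is admissible because the fine indices cooperate ($2=p_0$ on an $F$-source, $q=q$ on a $B$-target with $q\ge 2$).

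The step I expect to be the main obstacle is this last reduction: matching the general and rather involved HS criterion, phrased through abstract weight characteristics, to the concrete weight $\dist(\cdot,S)^\gamma$, and confirming that the codimension $k$ enters \emph{only} through admissibility ($\gamma<k$) while the embedding line is governed solely by $d+\gamma$. A consistency check that also motivates the final condition is the scaling heuristic: testing with $u(\cdot/\lambda)$ gives $\|\nabla u(\cdot/\lambda)\|_{L^2(\mu_\Omega^*)}^2 \sim \lambda^{d+\gamma-2}$ against $\|u(\cdot/\lambda)\|_{\dot W^{\theta,q}}\sim \lambda^{d/q-\theta}$, which forces exactly $1-\frac{d+\gamma}{2}\geq\theta-\frac{d}{q}$ and shows the condition is sharp.
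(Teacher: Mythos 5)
Your high-level route is the same as the paper's: identify the target as $B^{\theta}_{q,q}(\R^d)$, place the source in the weighted Besov scale using that $\mu_\Omega^*\in\mathcal A_2$, and invoke \cite{HS} (the paper uses Remark 1.7 and Proposition 1.8 of \cite{HS} to get $W^{1,2}(\R^d,\mu_\Omega^*)\subset B^1_{2,2}(\R^d,\mu_\Omega^*)$, then \cite[Proposition 2.1(i)]{HS} for $B^1_{2,2}(\R^d,\mu_\Omega^*)\subset B^{\theta}_{q,q}(\R^d)$). However, the step you explicitly defer as ``the main obstacle'' is not a routine reduction --- it is the substance of the proof, and your proposal does not carry it out. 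The criterion of \cite[Proposition 2.1(i)]{HS} is not an abstract ``effective dimension $d+\gamma$'' statement; it requires verifying
$$\sup_{l\in \N_0,\, m\in \Z^d} 2^{-l(1-\theta + \frac{d}{q})}\Big ( \int_{Q(2^{-l}m,2^{-l})} \dist(x,S)^\gamma\,dx\Big)^{-1/2} <\infty,$$
which, under the hypothesis $1-\frac{d+\gamma}{2}\geq\theta-\frac{d}{q}$, amounts to the uniform lower bound $\int_{Q(2^{-l}m,2^{-l})}\dist(x,S)^\gamma\,dx \geq c\,2^{-l(d+\gamma)}$ with $c$ independent of $l$ and $m$. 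Proving this for \emph{all} dyadic cubes --- those meeting $S$, those inside a tube $S_\kappa$, those far from $S$, and all scales $l$ --- occupies Steps 2--4 of the paper's proof and genuinely uses the compactness and Lipschitz structure of $S$: bi-Lipschitz flattening $\psi$ with $|\det\psi'|^{-1}$ bounded below, the inclusion of a comparable cube $Q(\psi(2^{-l}m),\delta 2^{-l})$ in the image, and the explicit one-dimensional estimate $\eta(s,t)\geq ct^{\gamma+1}$ uniformly in the position $s$. Your closing scaling computation only reproduces the paper's remark \emph{after} the proposition, which establishes necessity of the exponent relation; it contributes nothing to sufficiency. So while your heuristic is correct in spirit (the worst case is cubes centered on $S$, and $k$ enters the final condition only through the admissibility $\gamma<k$), the core quantitative verification is missing.

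A secondary point concerns your patch for the fact that the weight sits only on the gradient. Reducing to functions supported in $B(0,2R)$ proves the embedding only on that subspace, which is formally weaker than Proposition \ref{prop:Sob-weight} as stated for all of $W^{1,2}(\R^d,\mu_\Omega^*)$ (though it would suffice for the paper's subsequent applications through Lemma \ref{lem:extension}). Moreover, this detour through the fully weighted space $F^1_{2,2}(\R^d,\mu_\Omega^*)$ cannot be upgraded to the full space: since $S$ is compact, $\mu_\Omega^*(x)=\dist(x,S)^\gamma$ grows like $|x|^\gamma$ at infinity, and the inequality $\|u\|_{L^2(\R^d,\mu_\Omega^*)}\leq C\big(\|u\|_{L^2(\R^d)}+\|\nabla u\|_{L^2(\R^d,\mu_\Omega^*)}\big)$ fails globally (test with bumps of radius $\rho$ centered at distance $\sim\rho$ from $S$ and let $\rho\to\infty$). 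The paper avoids this by passing directly from $W^{1,2}(\R^d,\mu_\Omega^*)$, with its unweighted $L^2$-part, into $B^1_{2,2}(\R^d,\mu_\Omega^*)$ via \cite{HS}, rather than through an identification with a fully weighted first-order space.
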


\begin{proof} \emph{Step 1.} Let $B_{2,2}^1(\R^d,\mu_\Omega^*)$ be the Besov space with respect to the weight $\mu_\Omega^*$. Since $\mu_\Omega^*$ belongs to the Muckenhoupt class $\mathcal A_2$ by Lemma \ref{lem:weight}, it follows from Remark 1.7 and Proposition 1.8 of \cite{HS} that 
$$W^{1,2}(\R^d,\mu_\Omega^*) \subset  B_{2,2}^1(\R^d,\mu_\Omega^*).$$
Moreover, $W^{\theta,q}(\R^d) = B_{q,q}^\theta(\R^d)$ for $\theta\in (0,1)$ by \cite[Section 2.3.1]{Triebel}. The asserted embedding will thus be a consequence of
\begin{equation}\label{Besov}
	B_{2,2}^1(\R^d,\mu_\Omega^*) \subset B_{q,q}^\theta(\R^d).
	\end{equation}

\emph{Step 2.} We derive this embedding from the sufficient condition given in \cite[Proposition 2.1(i)]{HS}. Let $Q(x,r)$ be the cube in $\R^d$ with edges parallel
to the coordinate axes, centered at $x \in \R^d$ with edge length $r>0$. According to \cite{HS}, \eqref{Besov} holds true if we show that
$$\sup_{l\in \N_0, m\in \Z^d} 2^{-l(1-\theta + \frac{d}{q})}\Big ( \int_{Q(2^{-l}m,2^{-l})} \text{dist}(x,S)^\gamma\,dx\Big)^{-1/2} <\infty.$$
By the assumption $1 - \frac{d+\gamma}{2} \geq \theta - \frac{d}{q}$, this will be a consequence of the estimate
\begin{equation}\label{Q_estimate}
\int_{Q(2^{-l}m,2^{-l})} \text{dist}(x,S)^\gamma\,dx\geq c 2^{-l(d+\gamma)},\qquad l\in \N, \qquad m\in \Z^d,
  \end{equation}
where $c>0$ is independent of $l$ and $m$. In the sequel we prove \eqref{Q_estimate}.

\emph{Step 3.}  Since $S$ is Lipschitzian, there is a tube $S_\kappa$ of width $\kappa >0$ around $S$ such that every $Q(2^{-l}m,2^{-l})\subset S_\kappa$ 
lies in a neighbourhood $V$ of $S$ which is mapped to the unit cube in $\mathbb{R}^d$ by a bi-Lipschitz map $\psi$ such that $ \psi(S\cap V) = (-1,1)^{d-k} \times \{0_{\R^k}\}$. 

Choose $l_0\in \N$ such that $2^{-l_0 \gamma}+2^{-l_0} \leq \kappa$. We claim that it suffices to prove \eqref{Q_estimate} for $l\geq l_0$ and $m$ such that $Q(2^{-l}m,2^{-l})\subset S_\kappa$, where $c$ is independent of those $l$ and $m$.

Assume \eqref{Q_estimate} is proved for those $l$ and $m$. Let $l\geq l_0$ and $m$ be such that $Q(2^{-l}m,2^{-l})$ is not contained in $S_\kappa$. Then we trivially have
$$\int_{Q(2^{-l}m,2^{-l})} \text{dist}(x,S)^\gamma\,dx \geq c 2^{-ld}2^{-l_0\gamma}\geq c 2^{-l(d+\gamma)}.$$ 
This yields \eqref{Q_estimate} for $l\geq l_0$ and arbitrary $m$. Let $l<l_0$. Then
$$\int_{Q(2^{-l}m,2^{-l})} \text{dist}(x,S)^\gamma\,dx \geq \int_{Q(2^{-l_0}(2^{l_0-l}m),2^{-l_0})} \text{dist}(x,S)^\gamma\,dx \geq c 2^{-l_0(d+\gamma)} \geq \tilde c 2^{-l(d+\gamma)},$$
where $\tilde c = 2^{-l_0(d+\gamma)}$ is independent of $l$ and $m$.

\emph{Step 4.} It remains to prove \eqref{Q_estimate} for $l\geq l_0$ and $m$ such that $Q(2^{-l}m,2^{-l})\subset S_\kappa$. The integral in \eqref{Q_estimate} transforms as 
$$ \int_{Q(2^{-l}m,2^{-l})} \text{dist}(x,S)^\gamma\,dx = \int_{\psi(Q(2^{-l}m,2^{-l}))} \text{dist}(\psi^{-1}(y),S)^\gamma |\det \psi'(y)|^{-1}\, dy,$$
where $|\det \psi'|^{-1} \geq c$ can be uniformly chosen by compactness of $S$. From the bi-Lipschitz property of $\psi$ 
    it follows that $\text{dist}(\psi^{-1}(y),S) \simeq \text{dist}(y,\psi(S))$. Since $\psi(S) \subset \R^{d-k}\times \{0_{\R^k}\}$, we thus get
    $$ \int_{Q(2^{-l}m,2^{-l})} \text{dist}(x,S)^\gamma\,dx \geq
    c\int_{\psi(Q(2^{-l}m,2^{-l}))} \big( \vert y_{d-k+1} \vert^\gamma + \ldots + \vert y_d \vert^\gamma\big) \, dy.$$
Again the bi-Lipschitz property of $\psi$ yields $\delta > 0$, independent of $l$ and $m$, such that $Q(\psi(2^{-l}m), \delta 2^{-l})$ is contained in $\psi(Q(2^{-l}m,2^{-l}))$. It therefore remains to estimate 
$$\int_{Q(\psi(2^{-l}m), \delta 2^{-l})} \big( \vert y_{d-k+1} \vert^\gamma + \ldots + \vert y_d \vert^\gamma\big) \, dy = \delta^{d-1}2^{-l(d-1)} \sum_{j=0}^{k-1}  \int_{\psi_j(2^{-l}m)-\delta2^{-l}}
     ^{\psi_j (2^{-l}m)+\delta 2^{-l}} \vert \tau \vert^\gamma \, d\tau.$$
For each $j$, here the integral is given by 
$$ \eta(s,t) := \frac{1}{\gamma + 1} (\text{sign}(s+t)\vert s+t \vert^{\gamma + 1} - \text{sign}(s-t) \vert s-t \vert^{\gamma +1}),$$
     where $s=\psi_j(2^{-l}m)\in \mathbb{R}$  and $t=\delta 2^{-l} > 0$. By distinguishing the three cases $s\geq t$, 
    $s \in (-t,t)$  and $s \leq -t$ and using the triangle inequality for the $(\gamma+1)$-norm in $\R^2$, we see that 
    $\eta(s,t)\geq c t^{\gamma+1}$, where $c$ is independent of $s$. We thus obtain the estimate
    $$\int_{Q(\psi(2^{-l}m), \delta 2^{-l})} \big( \vert y_{d-k+1} \vert^\gamma + \ldots + \vert y_d \vert^\gamma\big) \, dy \geq c2^{-l(d+\gamma)},$$
  independently of $m$, and this gives \eqref{Q_estimate}.\end{proof}

\begin{rem}A scaling argument gives necessary conditions on the parameters for embedding of the type given in Proposition \ref{prop:Sob-weight} to hold, at least in the model case 
$S = \R^{d-k}\times \{0_k\}$, where
 \begin{equation} \text{dist}(x,S)^\gamma \sim |x_1|^\gamma + ... + |x_k|^\gamma.\end{equation} \label{equNorm}
 Assuming $\|u\|_{W^{\theta,q}(\R^d)} \leq C \|u\|_{W^{1,2}(\R^d; \text{dist}(\cdot,S)^\gamma)}$ for a constant $C$ independent of $u$, replacing $u$ by $u(\lambda\cdot)$ with $\lambda > 0$ and rescaling $y = \lambda x$ such that $dx = \lambda^{-d}dx$, we obtain
$$\lambda^{-\frac{d}{q}}\|u\|_{L^q(\R^d)} + \lambda^{\theta-\frac{d}{q}}[u]_{W^{\theta,q}(\R^d)}\leq C\big ( \lambda^{-\frac{d}{2}}\|u\|_{L^2(\R^d)} + \lambda^{1-\frac{d+\gamma}{2}}\|\nabla u\|_{L^2(\R^d;\text{dist}(\cdot,S)^\gamma dx)}\big).$$
Letting $\lambda\to \infty$, this shows that for any $\theta\in (0,1)$ and $q \geq 2$ the condition $1-\frac{d+\gamma}{2}\geq \theta - \frac{d}{q}$ is necessary. \end{rem}

We combine the above results to obtain the following properties of the traces.

\begin{proposition}\label{prop:trace-weight} For $1<r < \frac{2(d-1)}{d+\gamma -2}$ the trace operators $\emph{\tr}_{\Gamma_{\emph{\dyn}}}$ and $\emph{\tr}_\Sigma$ are continuous and compact maps
$$\emph{\tr}_{\Gamma_{\emph{\dyn}}}: W^{1,2}(\Omega, \mu_\Omega^*) \to L^r(\Gamma_{\emph{\dyn}}, d\mathcal H_{d-1}), \qquad \emph{\tr}_{\Sigma}: W^{1,2}(\Omega, \mu_\Omega^*) \to L^r(\Sigma, d\mathcal H_{d-1}).$$
\end{proposition}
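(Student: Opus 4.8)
The plan is to factor each trace operator through the chain
$$W^{1,2}(\Omega,\mu_\Omega^*) \xrightarrow{\ \mathcal E\ } W^{1,2}(\R^d,\mu_\Omega^*) \hookrightarrow W^{\theta,q}(\R^d) \xrightarrow{\ \tr\ } W^{\theta-\frac1q,q}(\SSS) \hookrightarrow L^r(\SSS),$$
where $\SSS$ stands for $\Gamma_\dyn$ or $\Sigma$, using in order the extension operator of Lemma \ref{lem:extension}, the weighted Sobolev embedding of Proposition \ref{prop:Sob-weight}, the classical trace theorem for Slobodetskii spaces onto a $(d-1)$-dimensional Lipschitz hypersurface, and finally the Sobolev embedding on the $(d-1)$-dimensional manifold $\SSS$. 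The point of the computation is that the intermediate parameters will cancel and reproduce exactly the claimed range.

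Concretely, I would fix $q=2$ and choose the smoothness index $\theta$ in the range $(\tfrac12,\,1-\tfrac{\gamma}{2})$. This is admissible in Proposition \ref{prop:Sob-weight} because there $1-\frac{d+\gamma}{2}\ge \theta-\frac d2$ is equivalent to $\theta\le 1-\frac{\gamma}{2}$, and it guarantees $\theta>\frac1q=\frac12$, so that the trace is well defined and lands in $W^{\theta-\frac12,2}(\SSS)$ with $0<\theta-\frac12<\frac{1-\gamma}{2}<1$. The Sobolev embedding on the $(d-1)$-dimensional manifold then gives $W^{\theta-\frac12,2}(\SSS)\hookrightarrow L^r(\SSS)$ whenever $\frac1r\ge \frac12-\frac{\theta-1/2}{d-1}$. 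As $\theta\uparrow 1-\frac{\gamma}{2}$ the right-hand side tends to $\frac{d+\gamma-2}{2(d-1)}$, so every $r<\frac{2(d-1)}{d+\gamma-2}$ is reached by choosing $\theta$ close enough to $1-\frac{\gamma}{2}$; moreover the embedding is then strictly subcritical, so Rellich--Kondrachov on the bounded Lipschitz manifold $\SSS$ (whose closure lies in a compact Lipschitz submanifold by Assumption \ref{assu:geo}(d)) upgrades continuity to compactness.

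This computation only closes when $\gamma<1$, since otherwise the interval $(\tfrac12,\,1-\tfrac{\gamma}{2})$ is empty and no admissible $\theta$ produces a trace; this is precisely Case (B), where $\gamma<1$ is assumed. In Case (A), where $S\cap\overline{\Gamma_\dyn\cup\Sigma}=\emptyset$, I would instead use that the trace is a local operator: on a fixed neighbourhood of $\overline{\SSS}$ the weight $\mu_\Omega^*=\dist(\cdot,S)^\gamma$ is bounded above and below by positive constants, so there the weighted gradient norm is equivalent to the unweighted $W^{1,2}$-norm and the nondegenerate trace result used in Section \ref{sec:nondeg} (cf. \cite[Prop.~2.8, Lemma~2.10]{EMR}) applies directly, yielding traces and compactness into $L^r(\SSS)$ for all $r<\frac{2(d-1)}{d-2}$; since $\gamma\ge 0$ this contains the asserted range. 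The two cases together cover all admissible $\gamma<k$.

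The main obstacle is the trace step on a genuinely Lipschitz (rather than smooth) hypersurface. I would handle it by localizing with the countable graph charts $(\g_\alpha,U_\alpha)$ of $\SSS$ together with a subordinate partition of unity, flattening each graph by the bi-Lipschitz map $(y,t)\mapsto(y,\,t+h_\alpha(y))$, and invoking the stability of the fractional space $W^{\theta,q}$ with $0<\theta<1$ under bi-Lipschitz changes of variables to reduce to the flat trace theorem $W^{\theta,q}(\R^d)\to W^{\theta-\frac1q,q}(\R^{d-1})$. The delicate point is to keep all fractional orders, in particular $0<\theta-\frac1q<1$, inside the bi-Lipschitz-invariant range while simultaneously respecting the admissibility constraint of Proposition \ref{prop:Sob-weight}; verifying that the chosen parameter window $(\tfrac12,1-\tfrac\gamma2)$ does so, and matching the final exponent, is the part requiring care.
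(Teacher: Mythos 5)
Your main chain is correct and, for $\gamma<1$, does reproduce the full asserted range: with $q=2$ the admissibility window $\theta\in(\tfrac12,1-\tfrac\gamma2)$ is exactly right, the arithmetic $\tfrac12-\tfrac{\theta-1/2}{d-1}\to\tfrac{d+\gamma-2}{2(d-1)}$ is correct, and your Case (A) localization (cut off near $\overline{\Gamma_{\dyn}\cup\Sigma}$, where $\mu_\Omega^*$ is comparable to a constant, and quote the unweighted results of \cite{EMR}) is essentially the argument the paper itself uses at the corresponding point of the lemma following this proposition. However, your closing claim that the two cases cover all admissible $\gamma<k$ is false relative to the statement as written: Proposition \ref{prop:trace-weight} carries no hypothesis relating $S$ to $\Gamma_{\dyn}\cup\Sigma$ and no restriction $\gamma<1$. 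If $S\cap\overline{\Gamma_{\dyn}\cup\Sigma}\neq\emptyset$ and $1\le\gamma<k$, the range $1<r<\tfrac{2(d-1)}{d+\gamma-2}$ is still nonempty (its upper endpoint is then $\le 2$ but $>1$, since $\gamma<d$), yet neither of your arguments produces any trace there: your window $(\tfrac12,1-\tfrac\gamma2)$ is empty, and the weight is not bounded below near the surfaces. Taking $q>2$ in Proposition \ref{prop:Sob-weight} does not help either: the trace requirement $\theta>\tfrac1q$ together with $\theta\le 1-\tfrac{d+\gamma}{2}+\tfrac dq$ forces $q<\tfrac{2(d-1)}{d+\gamma-2}\le 2$, which collides with the hypothesis $q\ge2$ of that proposition. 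The restriction $\gamma<1$ enters the paper only afterwards (in Case (B) of the next lemma and of Theorem \ref{thm:mr2}, where $r=2$ is needed); the trace proposition itself is unconditional in $\gamma$.

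Structurally, the paper avoids your bottleneck by not fixing $q=2$: it matches the integrability to the target, applying Proposition \ref{prop:Sob-weight} with $q=r$ and $\theta=\tfrac1r+\varepsilon$, so that only smoothness slightly above $\tfrac1r$ (rather than above $\tfrac12$) is needed; the admissibility condition $1-\tfrac{d+\gamma}{2}>\tfrac{1-d}{r}$ then \emph{is} the asserted bound $r<\tfrac{2(d-1)}{d+\gamma-2}$, uniformly in $\gamma$ and without case distinctions. It then invokes the one-step trace estimate $W^{1/r+\varepsilon,r}(\R^d)\to L^r(\SSS,d\mathcal H_{d-1})$ of \cite[Lemma 2.7]{EMR} (a Jonsson--Wallin type trace onto a $(d-1)$-set), which bypasses both the fractional trace space $W^{\theta-1/2,2}(\SSS)$ on a merely Lipschitz surface and the Sobolev/Rellich theory on $\SSS$ that your route needs. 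Two consequences for you: first, your compactness step via Rellich--Kondrachov on $\SSS$ (an open portion of a Lipschitz submanifold) tacitly requires an extension property for $W^{s,2}(\SSS)$ that Assumption \ref{assu:geo} does not obviously supply; it is cleaner to obtain compactness as the paper does, from the support property of $\mathcal E$ in Lemma \ref{lem:extension} by lowering $\theta$ slightly \emph{before} taking traces. Second, if you wish to keep your Hilbertian chain, you must either add the hypothesis $\gamma<1$ when $S$ meets $\overline{\Gamma_{\dyn}\cup\Sigma}$ (which suffices for all later applications), or redo the embedding with $q=r<2$ --- noting that even the paper's own proof then uses Proposition \ref{prop:Sob-weight} outside its stated range $q\ge2$, so this case genuinely requires a $q<2$ analogue of that embedding rather than any rearrangement of your $q\ge2$ argument.
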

\begin{proof} We consider $\Sigma$, the arguments for $\Gamma_{\dyn}$ are the same. Let $\mathcal E$ be the extension operator for $W^{1,2}(\Omega, \mu_\Omega^*)$ from Lemma \ref{lem:extension}.  As in the proof of \cite[Proposition 2.8]{EMR} one can show that $\tr_{\Sigma} = \tr_{\Sigma} \mathcal E$.  Proposition \ref{prop:Sob-weight} together with the support property of $\mathcal E$ implies that there is $\varepsilon > 0$  such that $\mathcal E$ maps $W^{1,2}(\Omega, \mu_\Omega^*)$ compactly into $W^{1/r+\varepsilon,r}(\R^d)$ for $r>1$, provided $1-\frac{d+\gamma}{2} > \frac{1-d}{r}$. Since $d \geq 2$ and $\gamma >0$  we have $1-\frac{d+\gamma}{2} < 0$, such that this inequality is equivalent to $r < \frac{2(d-1)}{d+\gamma -2}$. Now \cite[Lemma 2.7]{EMR} implies that $\tr_\Sigma$ maps $W^{1/r+\varepsilon,r}(\R^d)$ continuously into $L^r(\Sigma, d\mathcal H_{d-1})$ for those $r$. Altogether, $\tr_\Sigma$ is continuous and compact.
\end{proof}


\subsection{The operators $A_p$ on $\mathbb L^p$} We modify $\dom(\ft)$ from Definition \ref{def:domt} to take into account the degeneracy of the diffusion coefficient $\mu_\Omega$. We set
$$(u,v)_{\dom(\mathfrak t)} = (u,v)_{W^{1,2}(\Omega,\mu_\Omega^*)} + \int_{\Gamma_\dyn} \big(\nabla_{\Gamma_\dyn} u, \overline{\nabla_{\Gamma_\dyn} v}\big) \,\mu_{\Gamma_\dyn}^*\, d\mathcal H_{d-1} + \int_{\Sigma} \big(\nabla_\Sigma u, \overline{\nabla_\Sigma v}\big) \,\mu_{\Sigma}^* \,d\mathcal H_{d-1},$$
and define as before $\dom(\ft)$ as the completion of $C_D^\infty(\Omega)$ with respect to the corresponding Hilbert norm $\|\cdot\|_{\dom(\ft)}$. It is now given by
$$\|u\|_{\dom(\ft)}^2 = \|u\|_{W^{1,2}(\Omega,\mu_\Omega^*)}^2 + \|\nabla_{\Gamma_{\dyn}} u\|_{L^2(\Gamma_{\dyn}, \mu_{\Gamma_{\dyn}}^*)}^2 + \|\nabla_{\Sigma} u\|_{L^2(\Sigma, \mu_{\Sigma}^*)}^2.$$

Recall that the map $\J$ is for $u\in C_D^\infty(\Omega)$ given by $\J u = (u,u_{\Gamma_{\dyn}}, u_\Sigma)$. In the following we distinguish between the cases when the surface $S$, where the bulk diffusion degenerates, is away from $\Gamma_{\dyn}$ and $\Sigma$, and where the relation between these sets is arbitrary. In the second case we have to restrict to $\gamma <1$ for the distance exponent to obtain the continuity of $\J$ into $\L^2$.

\begin{lemma} Assume either $0<\gamma < d-k$ and $S\cap (\overline{\Gamma_{\emph{\dyn}}\cup \Sigma}) = \emptyset$ (Case $(A)$), 
  or assume $0< \gamma < 1$ (Case $(B)$). Then $\mathfrak J: \dom(\mathfrak t)\to \mathbb L^2$ is continuous and  has dense range. If (additionally) $0 < \gamma < 2$, then $\J$ is compact.
\end{lemma}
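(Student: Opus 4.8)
The plan is to split $\J u = (u, u_{\Gamma_\dyn}, u_\Sigma)$ into its three components and to treat the bulk part and the density once and for all, while the two trace components are handled differently in the two geometric cases. Continuity of the bulk component $u\mapsto u$ into $L^2(\Omega\setminus\Sigma)=L^2(\Omega)$ is immediate, since $\|u\|_{L^2(\Omega)}\le \|u\|_{\dom(\ft)}$ by the very definition of the norm. Dense range is essentially for free: as $C_D^\infty(\Omega)\subset \dom(\ft)$, it suffices to recall from the proof of Lemma \ref{lem:t-J} (following \cite{EMR}) that $\J C_D^\infty(\Omega)$ is already dense in $\L^2$, which is a statement about the image in $\L^2$ and is unaffected by the weight appearing in the domain norm. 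Throughout I use that $\|\cdot\|_{\dom(\ft)}\ge \|\cdot\|_{W^{1,2}(\Omega,\mu_\Omega^*)}$ gives a continuous inclusion $\dom(\ft)\subseteq W^{1,2}_D(\Omega,\mu_\Omega^*)$.

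For the continuity of the trace components into $L^2$ I would proceed by cases. In Case (B), the hypothesis $0<\gamma<1$ is precisely the condition under which $r=2$ is admissible in Proposition \ref{prop:trace-weight}, so $\tr_{\Gamma_\dyn},\tr_\Sigma\colon W^{1,2}(\Omega,\mu_\Omega^*)\to L^2$ are continuous, and restriction to $\dom(\ft)$ gives the claim. In Case (A), the sets $S$, $\overline{\Gamma_\dyn}$ and $\overline{\Sigma}$ are disjoint and compact, hence have positive mutual distance, so I may fix a cut-off $\chi\in C_c^\infty(\R^d)$ equal to $1$ on a neighbourhood of $\overline{\Gamma_\dyn\cup\Sigma}$ and supported where $\mu_\Omega^*=\dist(\cdot,S)^\gamma$ is bounded above and below by positive constants. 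Then for $u\in C_D^\infty(\Omega)$ one has $\tr_\Sigma u=\tr_\Sigma(\chi u)$, and by the product rule together with the non-degeneracy of the weight on $\supp\chi$ one gets $\|\chi u\|_{W^{1,2}(\Omega)}\le C\|u\|_{\dom(\ft)}$; the classical trace theorem on the Lipschitz surface $\Sigma$ yields $\|\tr_\Sigma u\|_{L^2(\Sigma)}\le C\|u\|_{\dom(\ft)}$, and likewise for $\Gamma_\dyn$. Extending by density over $C_D^\infty(\Omega)$ proves continuity.

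For compactness under the additional hypothesis $0<\gamma<2$, I again separate bulk and trace parts. For the bulk part I choose $q=2$ and any $\theta\in(0,1-\tfrac{\gamma}{2})$, which satisfies $1-\frac{d+\gamma}{2}\ge \theta-\frac{d}{2}$ exactly because $\gamma<2$; hence Proposition \ref{prop:Sob-weight} gives $W^{1,2}(\R^d,\mu_\Omega^*)\subset W^{\theta,2}(\R^d)$. Composing $\dom(\ft)\subseteq W^{1,2}_D(\Omega,\mu_\Omega^*)$ with the extension operator of Lemma \ref{lem:extension}, whose range is supported in the fixed ball $B(0,2R)$, produces a bounded map $\dom(\ft)\to W^{\theta,2}(B(0,2R))$; since $\theta>0$, this space embeds compactly into $L^2(B(0,2R))$ by Rellich, and restriction to $\Omega$ shows that $u\mapsto u$ is compact into $L^2(\Omega)$. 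For the trace parts, Case (B) already supplies compactness at $r=2$ via Proposition \ref{prop:trace-weight}, while in Case (A) the same cut-off reduces matters to the classical trace $W^{1,2}(\Omega)\to H^{1/2}(\Sigma)$ followed by the compact embedding $H^{1/2}(\Sigma)\hookrightarrow L^2(\Sigma)$ on the compact Lipschitz surface, and analogously for $\Gamma_\dyn$. Combining the three compact components shows that $\J$ is compact.

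The main obstacle is securing $L^2$-continuity of the traces at the sharp threshold. In Case (B) everything rests on pushing the trace exponent of Proposition \ref{prop:trace-weight} up to $r=2$, and the weight genuinely weakens the trace, so the restriction $\gamma<1$ is not an artefact but the true barrier; beyond it one must pass to the geometric separation of Case (A). In Case (A) the delicate point is verifying that the localized function $\chi u$ really belongs to the \emph{unweighted} space $W^{1,2}(\Omega)$ with norm controlled by $\|u\|_{\dom(\ft)}$, which is exactly where the strict positivity of $\dist(\cdot,S)$ near $\Gamma_\dyn\cup\Sigma$ enters.
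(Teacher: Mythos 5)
Your proposal is correct and follows essentially the same route as the paper's proof: density is inherited from $\mathfrak J C_D^\infty(\Omega)$ being dense in $\mathbb L^2$, trace continuity and compactness come from Proposition \ref{prop:trace-weight} at $r=2$ in Case (B) and from a cut-off reduction to the unweighted $W^{1,2}$-space in Case (A), and bulk compactness for $\gamma<2$ comes from the extension of Lemma \ref{lem:extension} combined with the weighted embedding of Proposition \ref{prop:Sob-weight}. The only cosmetic deviations are that in Case (A) the paper cites the trace results of ter Elst--Meyries--Rehberg where you invoke the classical trace into $H^{1/2}(\Sigma)$ followed by the compact embedding $H^{1/2}(\Sigma)\hookrightarrow L^2(\Sigma)$, and that the paper obtains compactness of the extension operator by slightly decreasing $\theta$ rather than applying Rellich on the ball $B(0,2R)$.
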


\begin{proof} \emph{Step 1.} Since $\dom(\ft) \subset W^{1,2}_D(\Omega,\mu_\Omega^*)$, for continuity and compactness it suffices to consider $\J$ on $W^{1,2}_D(\Omega,\mu_\Omega^*)$ instead of $\dom(\mathfrak t)$.

By definition we have $W^{1,2}_D(\Omega,\mu_\Omega^*)\subset L^2(\Omega)$. We claim that the latter embedding is also compact if $\gamma < 2$. Decompose the embedding into the extension $\mathcal E$ to $W^{1,2}(\R^d,\mu_\Omega^*)$ from Lemma \ref{lem:extension} and the restriction to $\Omega$. By Proposition \ref{prop:Sob-weight} we have $W^{1,2}(\R^d,\mu_\Omega^*) \subset W^{\theta,2}(\R^d)$ for some $\theta > 0$, provided $\gamma <2$. The support property yields that $\mathcal E$ is compact if $\theta$ is chosen slightly smaller. Hence $W^{1,2}_D(\Omega,\mu_\Omega^*)$ embeds compactly into  $L^2(\Omega)$ for $\gamma <2$.

\emph{Step 2.} We show that the traces at $\Gamma_{\dyn}$ and $\Sigma$ are continuous and compact from $W^{1,2}_D(\Omega,\mu_\Omega^*)$ into $L^2(\Gamma_{\dyn})$ and $L^2(\Sigma)$, respectively. Assume $\gamma < 1$. Then $\frac{2(d-1)}{d+\gamma -2} > 2$, and the assertion follows from Proposition \ref{prop:trace-weight}. Next assume $S\cap (\overline{\Gamma_{\dyn}\cup \Sigma}) =  \emptyset$. Choose a smooth cut-off $\psi$ such that $\psi \equiv 0$ on $S$ and $\psi\equiv 1$ in a neighbourhood of $\overline{\Gamma_{\dyn}\cup \Sigma}$. Then $\tr_{\Sigma} u = \tr_{\Sigma} (\psi u)$ for all $u\in W^{1,2}_D(\Omega,\mu_\Omega^*)$. The multiplication with $\psi$ is continuous from $W^{1,2}_D(\Omega,\mu_\Omega^*)$ into the unweighted space $W^{1,2}_D(\Omega)$, and $\tr_{\Sigma}$ is continuous and compact from $W^{1,2}_D(\Omega)$ to $L^2(\Sigma)$ by \cite[Lemma 2.10]{EMR}, analogously for $\tr_{\Gamma_{\dyn}}$.

\emph{Step 3.} By the proof of \cite[Lemma 2.10]{EMR} we have that $\J C_D^\infty(\Omega)$ is dense in $\L^2$. Hence $\J W^{1,2}_D(\Omega,\mu_\Omega^*)$ is dense since $C_D^\infty(\Omega) \subset W^{1,2}_D(\Omega,\mu_\Omega^*)$.\end{proof}

Now one can argue in the same way as in Lemma \ref{lem:t-J} to show that the sesquilinear form 
$$\mathfrak t(u,v) = \int_\Omega (\mu_\Omega \nabla u, \overline{\nabla v})\,dx + \int_{\Gamma_\dyn} \big (\mu_{\Gamma_\dyn}\nabla_{\Gamma_\dyn} u, \overline{\nabla_{\Gamma_\dyn} v}\big) \,d\mathcal H_{d-1} + \int_{\Sigma} \big(\mu_\Sigma\nabla_\Sigma u, \overline{\nabla_\Sigma v}\big) \,d\mathcal H_{d-1}$$
extends continuously from $C_D^\infty(\Omega)$ to $\dom(\ft)$, and that it is $\J$-elliptic.
Therefore, as in Proposition \ref{prop:AE1} we obtain a closed and densely defined operator $A_2$ associated to $\ft$, 
  which is the negative generator of an analytic $C_0$-semigroup $T_2(\cdot)$ on $\L^2$. 
  In order to show that $T_2(\cdot)$ is $\L^\infty$-contractive, it suffices to see that as in the proof of Proposition \ref{p:markovian}, $T_2(\cdot)$ 
  leaves $\L^2_{\mathbb{R}}$ and $\mathcal C$ invariant. 


Then, as in Section \ref{sec:extension}, the semigroup $T_2(\cdot)$ on $\L^2$ extends consistently to $T_p(\cdot)$ on $\L^p$ for $p\in [1,\infty]$, and for the generators $A_p$ and the relaxation coefficient $\zeta$ we obtain our main result.

\begin{theorem}\label{thm:mr2} Assume either $0<\gamma < d-k$ and $S\cap (\overline{\Gamma_{\emph{\dyn}}\cup \Sigma}) = \emptyset$ (Case $(A)$), 
  or assume $0< \gamma < 1$ (Case $(B)$). Then for each $p\in (1,\infty)$ the operator $\zeta^{-1} A_p$ with domain $\dom(A_p)$ admits a bounded holomorphic functional calculus on $\mathbb L^p$, with angle strictly smaller than $\frac{\pi}{2}$.  As a consequence, $\zeta^{-1} A_p$ enjoys maximal parabolic $L^s$-regularity for all $s\in (1,\infty)$ and $-\zeta^{-1} A_p$ generates an analytic $C_0$-semigroup on $\L^p$. Furthermore, the fractional power domains are given by complex interpolation, i.e.,
$$\dom(A_p^\theta) = [\mathbb L^p, \dom (A_p)]_\theta,\qquad \theta\in [0,1].$$ 
The resolvent of $\zeta^{-1} A_p$ is compact if $\gamma < 2$. \end{theorem}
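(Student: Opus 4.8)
The plan is to follow the same chain of arguments that established Theorem \ref{thm:1} in the non-degenerate case, now carried out over the weighted form domain $\dom(\ft)$. The only genuinely new ingredient is the continuity (and, for $\gamma<2$, compactness) of the map $\J\colon \dom(\ft)\to \L^2$, which has already been secured in the lemma preceding the theorem; once this is in hand, the abstract machinery of \cite{AE} and \cite{LX} applies essentially verbatim.

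First I would record that the form $\ft$ is continuous and $\J$-elliptic on $\dom(\ft)$. This repeats the estimates of Lemma \ref{lem:t-J}, the only change being that the bulk term is now controlled by $\|\nabla u\|_{L^2(\Omega,\mu_\Omega^*)}$; both the boundedness $|\ft(u,v)|\le C\|u\|_{\dom(\ft)}\|v\|_{\dom(\ft)}$ and the lower bound $\Re\ft(u,u)+\|\J u\|_{\L^2}^2\ge c\|u\|_{\dom(\ft)}^2$ follow from \eqref{e:muOmega} exactly as before. Invoking \cite[Theorem 2.1, Lemma 2.7]{AE} as in Proposition \ref{prop:AE1} then yields a closed, densely defined operator $A_2$ on $\L^2$ such that $-A_2$ generates an analytic $C_0$-semigroup $T_2(\cdot)$ of contractions, with compact resolvent precisely when $\J$ is compact, i.e. when $\gamma<2$.

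Next I would establish the $\L^\infty$-contractivity of $T_2(\cdot)$ by checking the invariance of $\L^2_{\R}$ and of $\mathcal C=\{\varphi\in\L^2_{\R}:\varphi\le\1\}$, exactly as in Proposition \ref{p:markovian}. The point is that the algebraic identities driving the invariance criteria of \cite{AE}, namely $\Re\ft(u,u-\Re u)\ge 0$ and $\Re\ft((\Re u)\wedge\1,\,u-(\Re u)\wedge\1)=0$, involve only the behaviour of the gradient under taking real parts and truncations; the presence of the weight $\mu_\Omega^*$ in the bulk integrand does not affect them, since $\mu_\Omega^*\ge 0$ and truncation does not enlarge the support of the gradient. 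With $\L^\infty$-contractivity in place, the interpolation–duality argument of Proposition \ref{p:contraction} extends $T_2(\cdot)$ consistently to contraction semigroups $T_p(\cdot)$ on $\L^p$ for $p\in[1,\infty]$, strongly continuous for finite $p$ and analytic for $p\in(1,\infty)$.

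Finally, applying \cite[Proposition 2.2]{LX} to this analytic, $\L^\infty$-contractive family on the scalar space $\L^p$ gives the bounded holomorphic functional calculus with angle below $\tfrac\pi2$, whence maximal parabolic $L^s$-regularity and analyticity of the generated semigroup; the identification $\dom(A_p^\theta)=[\L^p,\dom(A_p)]_\theta$ is a standard consequence of the bounded $H^\infty$-calculus. The relaxation coefficient $\zeta$ is incorporated by the measure-rescaling argument of \cite[Theorem 2.21, Proposition 2.20]{EMR}, which preserves all these properties, and the resolvent of $\zeta^{-1}A_p$ is compact exactly when that of $A_p$ is, i.e. for $\gamma<2$. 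The main obstacle in the whole scheme is therefore not any of these steps but the $\J$-continuity underlying them: it is there that the degeneracy toward $S$ interacts with the traces on $\Gamma_\dyn$ and $\Sigma$, forcing the case distinction—a cut-off localizing away from $S$ in Case $(A)$, and the sharper restriction $\gamma<1$ in Case $(B)$ needed to keep the trace exponent $\tfrac{2(d-1)}{d+\gamma-2}$ above $2$.
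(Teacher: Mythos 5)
Your proposal is correct and follows essentially the same route as the paper, which likewise establishes the theorem by rerunning the chain Lemma~\ref{lem:t-J} $\to$ Proposition~\ref{prop:AE1} $\to$ Proposition~\ref{p:markovian} $\to$ Section~\ref{sec:extension} over the weighted form domain, with the $\J$-continuity lemma (and the Case (A)/(B) dichotomy with $\gamma<1$, resp.\ compactness for $\gamma<2$) as the only new input, the rescaling via \cite[Theorem 2.21, Proposition 2.20]{EMR} for $\zeta^{-1}$, and \cite[Proposition 2.2]{LX} for the $H^\infty$-calculus. The only nitpick is your phrase that the resolvent is compact ``precisely when'' $\J$ is compact: compactness of $\J$ is what the cited machinery uses as a \emph{sufficient} condition, and neither the paper nor the theorem claims the converse.
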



\section{Embeddings for fractional power domains}\label{sec:emb}

Let $A_p$ be the operator from Theorem \ref{thm:1} or \ref{thm:mr2}. In this section we investigate conditions on $p\in (2,\infty)$ and $\theta\in (0,1)$ such that for the domain of the fractional power $A_p^\theta$ we have
\begin{equation}\label{eq:emb}
\dom(A_p^\theta) \hookrightarrow \mathbb L^\infty.
\end{equation}
We in particular aim to quantify the conditions in dependence on whether diffusion is degenerate or not, and where it degenerates.

Our motivation are semilinear versions of \eqref{e-parabol}--\eqref{e-initial}, 
  i.e., where the right-hand side $(f_{\Omega}, f_{\Gamma_{\dyn}}, f_\Sigma)$ depends nonlinearly on the solution itself. 
  If \eqref{eq:emb} holds true, then the  Nemytzkii operator induced by a  nonlinearity is well-defined on $\dom(A_p^\theta)$ with values in $\L^p$, 
  which in principle allows to apply the standard theory for semilinear parabolic equations to obtain local-in-time well-posedness 
  (see the introduction for further references).

The key to the embedding \eqref{eq:emb} is the regularity of the image of $\J$.

\begin{lemma}\label{l:sg-est} Let $p,r\in (2,\infty)$ and $\theta \in (0,1)$ such that $\theta > \frac{r}{(r-2)p}$. Assume 
\begin{equation}\label{eq:Lr}
\mathfrak{J}\dom(\mathfrak t) \subset \mathbb{L}^r.
\end{equation}
Then $\dom(A_p^\theta) \subset \mathbb L^\infty$.
\end{lemma}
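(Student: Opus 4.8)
The plan is to translate the regularity hypothesis \eqref{eq:Lr} into a smoothing estimate for the semigroup, upgrade this to an ultracontractivity bound of the form $\|T_p(t)\|_{\mathbb L^p\to\mathbb L^\infty}\le Ct^{-r/((r-2)p)}$, and then read off the embedding from the integral representation of the negative fractional power $(1+A_p)^{-\theta}$. The threshold $\theta>\frac{r}{(r-2)p}$ will arise exactly as the condition for convergence of that integral at $t=0$.

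First I would establish the base smoothing estimate $\|T_2(t)\|_{\mathbb L^2\to\mathbb L^r}\le Ct^{-1/2}$ for $0<t\le 1$. Given $g\in\mathbb L^2$ and $t>0$, the element $T_2(t)g$ lies in $\dom(A_2)$, so $T_2(t)g=\mathfrak J u_t$ for some $u_t\in\dom(\mathfrak t)$. Since the measure on the underlying set is finite, $\mathbb L^r\hookrightarrow\mathbb L^2$, so \eqref{eq:Lr} together with the closed graph theorem yields $\|\mathfrak J u\|_{\mathbb L^r}\le C\|u\|_{\dom(\mathfrak t)}$ for all $u\in\dom(\mathfrak t)$. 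Using the $\mathfrak J$-ellipticity of Lemma \ref{lem:t-J}, the constitutive relation of Proposition \ref{prop:AE1}, and the analytic estimate $\|A_2T_2(t)\|_{\mathbb L^2\to\mathbb L^2}\le C/t$,
\[
\|u_t\|_{\dom(\mathfrak t)}^2\le C\big(\Re\mathfrak t(u_t,u_t)+\|\mathfrak J u_t\|_{\mathbb L^2}^2\big)
=C\big(\Re(A_2T_2(t)g,T_2(t)g)_{\mathbb L^2}+\|T_2(t)g\|_{\mathbb L^2}^2\big)\le C\big(t^{-1}+1\big)\|g\|_{\mathbb L^2}^2,
\]
which gives $\|T_2(t)g\|_{\mathbb L^r}\le Ct^{-1/2}\|g\|_{\mathbb L^2}$ for small $t$, and boundedness for large $t$ via the contractivity of $T_2$.

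Next I would upgrade this to the full $\mathbb L^p\to\mathbb L^\infty$ smoothing. The adjoint semigroup $T_2(t)^*$ is generated by $-A_2^*$, which is associated with the adjoint form $\mathfrak t^*$; since $\mathfrak t^*$ has the same domain $\dom(\mathfrak t)$ and coefficients satisfying the same bounds as in Assumption \ref{assu:coeff}, the previous step applies verbatim and gives $\|T_2(t)^*\|_{\mathbb L^2\to\mathbb L^r}\le Ct^{-1/2}$; dualizing, $\|T_2(t)\|_{\mathbb L^{r'}\to\mathbb L^2}\le Ct^{-1/2}$, and composition yields $\|T_2(t)\|_{\mathbb L^{r'}\to\mathbb L^r}\le Ct^{-1}$. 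Since the family $(T_p(\cdot))_{p\in[1,\infty]}$ is consistent and contractive on every $\mathbb L^q$ (Propositions \ref{p:markovian} and \ref{p:contraction}), the single smoothing estimate carries the homogeneity of a formal dimension $D=2r/(r-2)$, and a standard Nash–Moser (Coulhon-type) iteration promotes it to the whole ultracontractive scale $\|T_p(t)\|_{\mathbb L^q\to\mathbb L^s}\le Ct^{-\frac D2(\frac1q-\frac1s)}$ for $1\le q\le s\le\infty$ and $0<t\le1$. In particular $\|T_p(t)\|_{\mathbb L^p\to\mathbb L^\infty}\le Ct^{-r/((r-2)p)}$, while for $t\ge1$ the contractivity on $\mathbb L^p$ gives $\|T_p(t)\|_{\mathbb L^p\to\mathbb L^\infty}\le C$.

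Finally I would conclude from the integral formula. For $u\in\dom(A_p^\theta)=\dom((1+A_p)^\theta)$ write $u=(1+A_p)^{-\theta}g$ with $g\in\mathbb L^p$ and $\|g\|_{\mathbb L^p}\simeq\|u\|_{\dom(A_p^\theta)}$, and use
\[
(1+A_p)^{-\theta}=\frac{1}{\Gamma(\theta)}\int_0^\infty t^{\theta-1}e^{-t}T_p(t)\,dt.
\]
Combining with the smoothing bound gives $\|u\|_{\mathbb L^\infty}\le C\|g\|_{\mathbb L^p}\int_0^\infty t^{\theta-1-r/((r-2)p)}e^{-t}\,dt$, where the factor $e^{-t}$ absorbs the large-$t$ range and the integral converges near $t=0$ precisely because $\theta>\frac{r}{(r-2)p}$; this proves $\dom(A_p^\theta)\hookrightarrow\mathbb L^\infty$. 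I expect the main obstacle to be this upgrade step: extracting the sharp ultracontractive exponent from the single $\mathbb L^2\to\mathbb L^r$ estimate. The delicate points are passing to the dual smoothing estimate through the non-selfadjoint adjoint form, and verifying that the Nash-type iteration goes through in the present abstract $\mathbb L^p$-scale with only contractivity (rather than symmetry) available, while tracking the numerology $D=2r/(r-2)$ so that the threshold matches $\theta>\frac{r}{(r-2)p}$.
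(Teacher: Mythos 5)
Your proof is correct and follows essentially the same route as the paper: an ultracontractivity bound $\|e^{-t}T_p(t)\|_{\mathbb{L}^p\to\mathbb{L}^\infty}\leq Ct^{-r/((r-2)p)}$ combined with the integral representation of $(1+A_p)^{-\theta}$, with the threshold $\theta>\frac{r}{(r-2)p}$ arising from convergence of the integral at $t=0$. The only difference is that the paper obtains the $\mathbb{L}^2\to\mathbb{L}^\infty$ smoothing estimate by citing the proof of Lemma 2.19 in the companion paper of ter Elst, Meyries and Rehberg, whereas you reconstruct that step from scratch (form energy estimate via $\mathfrak{J}$-ellipticity and analyticity, duality through the adjoint form, and a Coulhon-type iteration), which is exactly the content of the cited argument.
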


\begin{proof} Let $T_p(\cdot)$ be the semigroup on $\L^p$ generated by $-A_p$. The arguments given in the proof of \cite[Lemma 2.19]{EMR} show that there is $C > 0$ such that 
$$\|e^{-t} T_2(t) \varphi\|_{\mathbb L^\infty} \leq C t^{-\frac{r}{(r-2)2}} \|\varphi \|_{\mathbb L^2},\qquad t > 0,\qquad \varphi\in \mathbb L^2.$$
Interpolating this inequality with the $\L^\infty$-contractivity of $T_2(\cdot)$, we obtain that 
\begin{equation}\label{eq:esti-123}
 \|e^{-t} T_p(t) \varphi\|_{\mathbb L^\infty} \leq C t^{-\frac{r}{(r-2)p}} \|\varphi \|_{\mathbb L^p},\qquad t > 0,\qquad \varphi\in \mathbb L^p.
\end{equation}
Since $1+A_p$ is invertible, we have that $$u\mapsto \|(A_p+1)^\theta u\|_{\L^p}$$ defines an equivalent norm on $\dom(A_p^\theta)$. For $\theta\in (0,1)$ it is further well-known that 
$$
(A_p+1)^{-\theta} = C_\theta \int_0^\infty t^{\theta-1} e^{-t} T_p(t) \,dt.
$$
Using \eqref{eq:esti-123} for $t\in (0,1)$ and the contractivity of $T_p(\cdot)$ for $t> 1$, for $u \in \dom(A_p^\theta)$ we obtain
$$\|u\|_{\L^\infty}  \leq C \|u\|_{\dom(A_p^\theta)} \int_0^1 t^{\theta-1 - \frac{r}{(r-2)p}}  \,  dt +  C \|u\|_{\dom(A_p^\theta)} \int_1^\infty  e^{-t} \,  dt.$$
Here the first integral is finite if $\theta > \frac{r}{(r-2)p}$. In this case the embedding $\dom(A_p^\theta) \subset \mathbb L^\infty$ follows.
\end{proof}

In the sequel we determine $r_0 > 2$ as large as possible such that \eqref{eq:Lr} holds for all $2<r<r_0$. Since 
$$\L^r = L^r(\Omega)\oplus L^r(\Gamma_{\dyn}) \oplus L^r(\Sigma),$$
the number $r_0$ depends on how large $r$ can be such that
$$\dom(\mathfrak t) \subset L^r(\Omega), \qquad \tr_{\Gamma_{\dyn}}:\dom(\mathfrak t) \to L^r(\Gamma_{\dyn}),
  \qquad \tr_{\Sigma}:\dom(\mathfrak t) \to L^r(\Sigma),$$
are simultaneously continuous. In turn, this depends on whether the bulk diffusion degenerates or not,
    if it degenerates at $\overline{\Gamma_{\dyn}\cup \Sigma}$ where traces are taken, and {\em where} the surface diffusion on $\Gamma_{\dyn}$
    and $\Sigma$ degenerates. 

It follows from Lemma \ref{lem:extension} and Proposition \ref{prop:Sob-weight} that 
$$\dom(\mathfrak t) \subset W_D^{1,2}(\Omega) \subset L^{r}(\Omega)$$
for $r < r_\Omega := \frac{2d}{(d+\gamma -2)_+}$. If $S = \emptyset$ or $S \cap \overline{\Gamma_\dyn\cup \Sigma} = \emptyset$ 
(Case $(\mathrm{A})$),
    then by \cite[Proposition 2.8]{EMR} the traces are continuous from  $\dom(\mathfrak t) \subset W_D^{1,2}(\Omega)$ 
    into $L^r(\Gamma_{\dyn})$ and $L^r(\Sigma)$ for all $r < r_{\text{tr}} := \frac{2(d-1)}{(d-2)_+}$. 
    In case $S \cap \overline{\Gamma_\dyn\cup \Sigma} \neq \emptyset$ (Case $(\mathrm{B})$), where in Theorem \ref{thm:mr2} it is assumed that $\gamma < 1$, Proposition \ref{prop:trace-weight} shows that the traces are continuous only for $r < r_{\text{tr},\gamma}:= \frac{2(d-1)}{(d+\gamma-2)_+}$.

The regularity of the traces improves if surface diffusion is present. 
Assume that the surface diffusion is uniformly nondegenerate, i.e., $\mu_{\Gamma_\dyn}^*, \mu_\Sigma^* \geq \eta > 0$. Then the traces belong to $W^{1,2}(\Gamma_{\dyn})$ and $W^{1,2}(\Sigma)$. By Sobolev embeddings, the traces are thus continuous into $L^r$ for $r < r^*_{\text{tr}} := \frac{2(d-1)}{(d-3)_+}$. Observe that $r^*_{\text{tr}}> r_{\text{tr}}$, which quantifies the regularity improvement obtained from surface diffusion. Finally, assume that $S \cap \overline{\Gamma_\dyn\cup \Sigma} \neq \emptyset$ and that $\mu_{\Gamma_\dyn}^*, \mu_\Sigma^* \geq \eta > 0$ in a neighbourhood of $S \cap \overline{\Gamma_\dyn\cup \Sigma}$. Then the traces belong to $W^{1,2}$ in this neighbourhood, such that they belong to $L^r$ for $r < \min(r_{\text{tr}}, r_{\text{tr}}^*) = r_{\text{tr}}$. This improves the case without surface diffusion on the critical set  $S \cap \overline{\Gamma_\dyn\cup \Sigma}$ since $r_{\text{tr}} > r_{\text{tr},\gamma}.$

Now the number $r_0$ can be chosen as the minimum of $r_\Omega$ and $r_{\text{tr}}$, $r_{\text{tr},\gamma}$ or $r_{\text{tr}}^*$ according to the cases described above. The following figure gives an overview.

\includegraphics[scale=0.57]{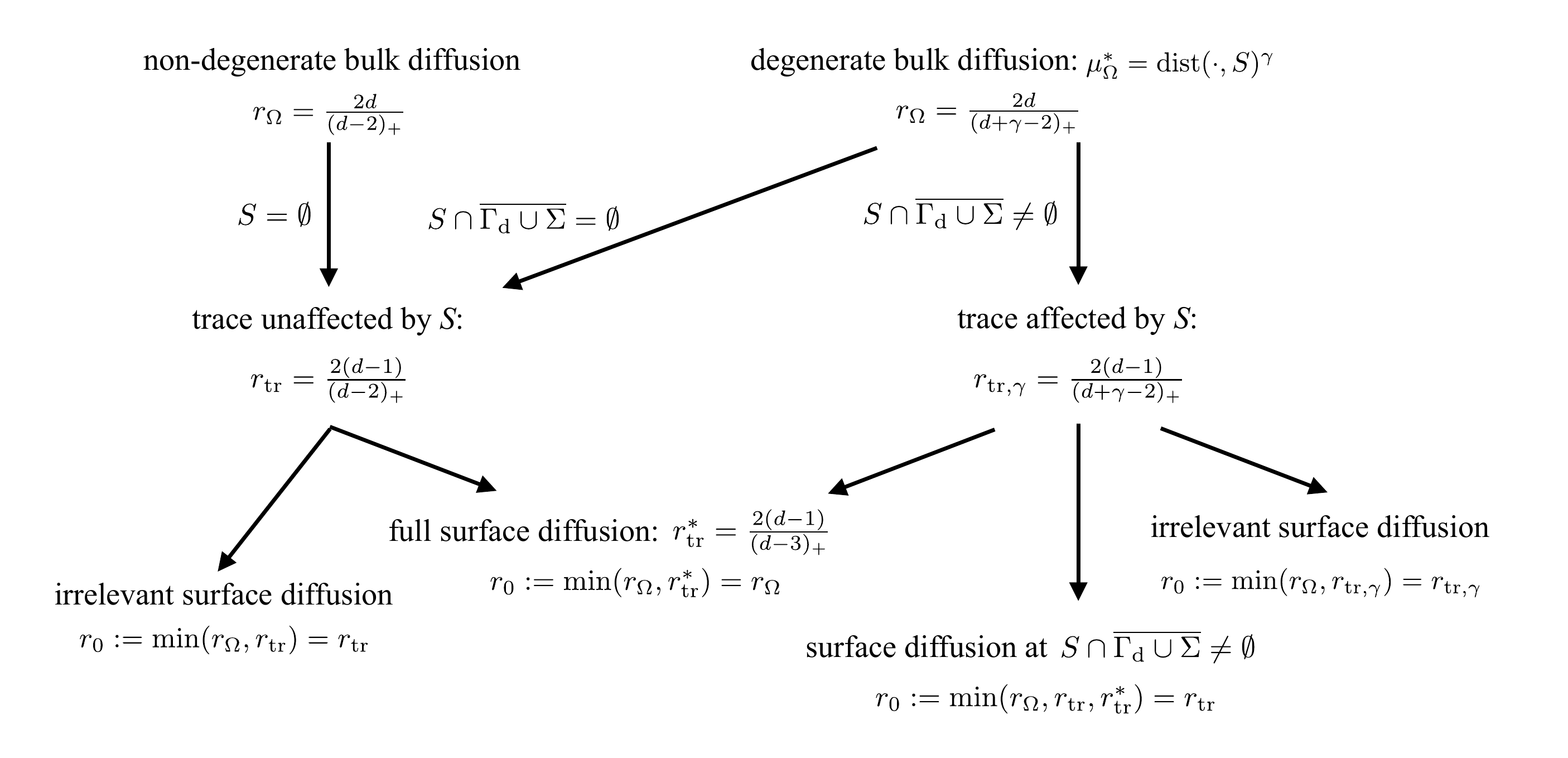}

One can check that if $0\leq \gamma <1$, in any case we have $r_0>2$. Together with 
Lemma \ref{l:sg-est} we thus have the following result.

\begin{theorem} Assume $0\leq \gamma <1$. Then there are $\theta_0\in (0,1)$ and $p_0\in 
(2,\infty)$ such that $\dom(A_p^\theta) \hookrightarrow \mathbb L^\infty$ for all 
$\theta \in (\theta_0,1)$ and $p\in (p_0,\infty)$.
\end{theorem}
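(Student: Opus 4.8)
The plan is to combine the pointwise semigroup estimate of Lemma \ref{l:sg-est} with the regularity analysis of $\J\dom(\ft)$ carried out in the preceding discussion. Recall that Lemma \ref{l:sg-est} reduces the desired embedding $\dom(A_p^\theta) \hookrightarrow \L^\infty$ to two ingredients: first, the validity of $\J\dom(\ft)\subset \L^r$ for some $r>2$; and second, the arithmetic condition $\theta > \frac{r}{(r-2)p}$ relating the exponents. The entire task is therefore to verify that under the sole hypothesis $0\le\gamma<1$ one can indeed find an admissible $r>2$, and then to choose $p$ large and $\theta$ close to $1$ so that the quantitative condition is met.

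First I would establish that $r_0>2$ in every one of the geometric cases distinguished above, which is precisely the claim flagged as checkable just before the theorem. The number $r_0$ is the minimum of the bulk exponent $r_\Omega = \frac{2d}{(d+\gamma-2)_+}$ and the appropriate trace exponent among $r_{\text{tr}}$, $r_{\text{tr},\gamma}$ and $r_{\text{tr}}^*$. Since $d\ge 2$, one checks directly that $r_\Omega>2$ whenever $\gamma<2$, so in particular for $\gamma<1$. For the trace exponents, the most restrictive situation is Case $(\mathrm{B})$ with no surface diffusion on the critical set, giving $r_{\text{tr},\gamma}=\frac{2(d-1)}{(d+\gamma-2)_+}$; the inequality $r_{\text{tr},\gamma}>2$ is equivalent to $d+\gamma-2 < d-1$, i.e. $\gamma<1$, which is exactly our standing hypothesis. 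All other cases yield a trace exponent at least as large, so $r_0 = \min(r_\Omega, r_{\text{tr},\cdot})>2$ throughout. This is the step I expect to carry the real content, since it is where the threshold $\gamma<1$ is seen to be the sharp dividing line guaranteeing a supercritical $r$.

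With $r_0>2$ secured, I would fix any $r\in(2,r_0)$, so that $\J\dom(\ft)\subset\L^r$ holds by the continuity of the three component embeddings established above. It then remains to satisfy $\theta > \frac{r}{(r-2)p}$. Since the right-hand side tends to $0$ as $p\to\infty$ for fixed $r$, I would choose $p_0$ large enough that $\frac{r}{(r-2)p_0}<1$, and then set $\theta_0 = \frac{r}{(r-2)p_0}$. For every $p>p_0$ and every $\theta\in(\theta_0,1)$ the hypothesis $\theta > \frac{r}{(r-2)p}$ of Lemma \ref{l:sg-est} is then verified, whence $\dom(A_p^\theta)\hookrightarrow\L^\infty$, completing the proof.

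The main obstacle is entirely contained in the first substantive step: confirming uniformly across Cases $(\mathrm A)$ and $(\mathrm B)$, and with or without surface diffusion, that the minimum defining $r_0$ strictly exceeds $2$ under $0\le\gamma<1$. The remaining exponent juggling with $p$ and $\theta$ is routine, as the quantitative condition becomes arbitrarily easy to satisfy by enlarging $p$. One should also note that the limiting case $\gamma=0$ (nondegenerate bulk, the setting of Theorem \ref{thm:1}) is included and causes no difficulty, since then $r_\Omega$ and the trace exponents only improve.
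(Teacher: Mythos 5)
Your proposal is correct and takes essentially the same route as the paper: the paper's proof consists precisely of the preceding case analysis showing $r_0>2$ for $0\le\gamma<1$ (with $r_{\mathrm{tr},\gamma}=\frac{2(d-1)}{(d+\gamma-2)_+}>2 \Leftrightarrow \gamma<1$ as the binding constraint, exactly as you observe) combined with Lemma \ref{l:sg-est}. Your explicit choices $p_0$ with $\frac{r}{(r-2)p_0}<1$ and $\theta_0=\frac{r}{(r-2)p_0}$ merely spell out the exponent bookkeeping the paper leaves implicit.
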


It is interesting to note that if diffusion is nowhere degenerate, then one can take 
$r_0=\frac{2d}{(d-2)_+}$. In this case, by Lemma \ref{l:sg-est} we have $\dom(A_p^\theta)
 \hookrightarrow \mathbb L^\infty$ provided 
$$2\theta > \frac{d}{p}.$$
This is precisely the optimal relation for the embedding of $H^{2\theta,p}$ into $L^\infty$.
 In a smooth situation one indeed expects that $\dom(A_p) \subset H^{2,p}(\Omega)$ and thus 
$\dom(A^\theta_p) \subset H^{2\theta,p}(\Omega)$, which shows that the above considerations
 are optimal at least in this case.

\subsection*{Acknowledgements}
	The authors thank Dorothee Knees and Maria Specovius--Neugebauer for many helpful comments and suggestions.

\end{document}